\providecommand{\U}[1]{\protect\rule{.1in}{.1in}}
\newtheorem{theorem}{Theorem}[section]
\newtheorem{definition}[theorem]{Definition}
\newtheorem{example}[theorem]{Example}
\newtheorem{proposition}[theorem]{Proposition}
\newtheorem{remark}[theorem]{Remark}
\newenvironment{proof}[1][Proof]{\noindent \textbf{#1.} }{\  $\Box$}
\numberwithin{equation}{section}
\begin{document}

\title{Comparison Theorem, Feynman-Kac Formula and Girsanov Transformation for BSDEs
Driven by $G$-Brownian Motion}
\author{Mingshang Hu \thanks{School of Mathematics, Shandong University,
humingshang@sdu.edu.cn. Research supported by the National Natural Science
Foundation of China (11201262)}
\and Shaolin Ji\thanks{Qilu Institute of Finance, Shandong University,
jsl@sdu.edu.cn }
\and Shige Peng\thanks{School of Mathematics and Qilu Institute of Finance,
Shandong University, peng@sdu.edu.cn, Hu, Ji, and Peng's research was
partially supported by NSF of China No. 10921101; and by the 111 Project No.
B12023}
\and Yongsheng Song\thanks{Academy of Mathematics and Systems Science, CAS,
Beijing, China, yssong@amss.ac.cn. Research supported by by NCMIS; Youth Grant
of National Science Foundation (No. 11101406); Key Lab of Random Complex
Structures and Data Science, CAS (No. 2008DP173182).} }
\maketitle
\date{}

\begin{abstract}
In this paper, we study comparison theorem, nonlinear Feynman-Kac formula and
Girsanov transformation of the following BSDE driven by a $G$-Brownian
motion.
\begin{align*}
Y_{t} &  =\xi+\int_{t}^{T}f(s,Y_{s},Z_{s})ds+\int_{t}^{T}g(s,Y_{s}%
,Z_{s})d\langle B\rangle_{s}\\
&  -\int_{t}^{T}Z_{s}dB_{s}-(K_{T}-K_{t}),
\end{align*}
where $K$ is a decreasing $G$-martingale.

\end{abstract}

\textbf{Key words}: $G$-expectation, Backward SDEs, Comparison theorem,
Feynman-Kac formula, Girsanov transformation

\textbf{MSC-classification}: 60H10, 60H30

\section{Introduction}

Recently, Peng systemically established a time-consistent fully nonlinear
expectation theory (see \cite{Peng2004}, \cite{Peng2005} and \cite{P09}).

As a typical and important case, Peng (2006) introduced the
$G$-expectation theory(see \cite{P10} and the references therein).
In the  $G$-expectation framework ($G$-framework for short), the
notion of  $G$-Brownian motion and the corresponding stochastic
calculus of It\^{o}'s type were established.

The solution of a BSDE driven by $G$-Brownian motion consists of a
triple of processes $(Y,Z,K)$, satisfying
\begin{align}
Y_{t}  &  =\xi+\int_{t}^{T}f(s,Y_{s},Z_{s})ds+\int_{t}^{T}g(s,Y_{s}%
,Z_{s})d\langle B\rangle_{s}\label{e1}\\
&  -\int_{t}^{T}Z_{s}dB_{s}-(K_{T}-K_{t}).\nonumber
\end{align}
The existence and uniqueness of the solution $(Y,Z,K)$ for
(\ref{e1}) is proved in \cite{HJPS}. In this paper, we further
consider the related topics associated with this kind of $G$-BSDEs.

We first study the comparison theorem which is one of the most
important properties of BSDEs. In order to prove this theorem, the
expilcit solutions of linear $G$-BSDEs are obtained. In order to do
this it seems that we have to define the dual forward equations in
an extended $G$-expectation space if the linear $G$-BSDEs include
the $ds$ term. The Gronwall inequality is derived as a by-product
which is interesting by itself.

Then we explore the link between $G$-BSDEs and partial differential
equations (PDE for short). It is well known that under a strong
elliptic assumption, Peng \cite{Peng1991} established a
probabilistic interpretation of a system of quasi-linear PDEs via
classical BSDEs. Then Peng \cite{Peng1992} and Pardoux \& Peng
\cite{PP92} obtained this interpretation for possibly degenerate
situation. This interpretation establishes a one to one
correspondence between the solution of a PDE and the corresponding
classical BSDE, i.e. the so-called nonlinear Feynman-Kac formula.
Peng gave the nonlinear Feynman-Kac Formula for a special type of
$G$-BSDEs in \cite{P10}. In this paper, we consider the
following type of $G$-FBSDEs:%
\[
dX_{s}^{t,\xi}=b(s,X_{s}^{t,\xi})ds+h_{ij}(s,X_{s}^{t,\xi})d\langle
B^{i},B^{j}\rangle_{s}+\sigma_{j}(s,X_{s}^{t,\xi})dB_{s}^{j},\ X_{t}^{t,\xi
}=\xi,
\]%
\begin{align*}
Y_{s}^{t,\xi}  &  =\Phi(X_{T}^{t,\xi})+\int_{s}^{T}f(r,X_{r}^{t,\xi}%
,Y_{r}^{t,\xi},Z_{r}^{t,\xi})dr+\int_{s}^{T}g_{ij}(r,X_{r}^{t,\xi}%
,Y_{r}^{t,\xi},Z_{r}^{t,\xi})d\langle B^{i},B^{j}\rangle_{r}\\
&  -\int_{s}^{T}Z_{r}^{t,\xi}dB_{r}-(K_{T}^{t,\xi}-K_{s}^{t,\xi}),
\end{align*}
Set $u(t,x):=Y_{t}^{t,x}$. We prove that $u(t,x)$ is the unique viscosity
solution of the following PDE:%
\[
\left\{
\begin{array}
[c]{l}%
\partial_{t}u+F(D_{x}^{2}u,D_{x}u,u,x,t)=0,\\
u(T,x)=\Phi(x),
\end{array}
\right.
\]
where%
\begin{align*}
F(D_{x}^{2}u,D_{x}u,u,x,t)=  &  G(H(D_{x}^{2}u,D_{x}u,u,x,t))+\langle
b(t,x),D_{x}u\rangle\\
&  +f(t,x,u,\langle\sigma_{1}(t,x),D_{x}u\rangle,\ldots,\langle\sigma
_{d}(t,x),D_{x}u\rangle),
\end{align*}

\begin{align*}
H_{ij}(D_{x}^{2}u,D_{x}u,u,x,t)=  &  \langle D_{x}^{2}u\sigma_{i}%
(t,x),\sigma_{j}(t,x)\rangle+2\langle D_{x}u,h_{ij}(t,x)\rangle\\
&
+2g_{ij}(t,x,u,\langle\sigma_{1}(t,x),D_{x}u\rangle,\ldots,\langle
\sigma_{d}(t,x),D_{x}u\rangle).
\end{align*}

Finally, we study the Girsanov transformation. Different from \cite{Os} and
\cite{XSZ}, we discuss the Girsanov transformation of the following form:%
\[
\bar{B}_{t}:=B_{t}-\int_{0}^{t}b_{s}ds-\int_{0}^{t}d_{s}^{ij}d\langle
B^{i},B^{j}\rangle_{s}.
\]
We give a direct and simple method to prove that $\bar{B}_{t}$ is a
$G$-Brownian motion under a consistent sublinear expectation.

The paper is organized as follows. In section 2, we present some
preliminaries for stochastic calculus under $G$-framework. The
explicit solutions of linear $G$-BSDEs and the comparison theorem
are established in section 3. In section 4, we obtain the nonlinear
Feynman-Kac formula for a fully nonlinear PDE. We prove the Girsanov
transformation for $G$-Brownian motion in section 5.

\section{Preliminaries}

We review some basic notions and results of $G$-expectation, the related
spaces of random variables and the backward stochastic differential equations
driven by a $G$-Browninan motion. The readers may refer to \cite{HJPS},
\cite{P07a}, \cite{P07b}, \cite{P08a}, \cite{P08b}, \cite{P10} for more details.

\begin{definition}
\label{def2.1} Let $\Omega$ be a given set and let $\mathcal{H}$ be a vector
lattice of real valued functions defined on $\Omega$, namely $c\in\mathcal{H}$
for each constant $c$ and $|X|\in\mathcal{H}$ if $X\in\mathcal{H}$.
$\mathcal{H}$ is considered as the space of random variables. A sublinear
expectation $\mathbb{\hat{E}}$ on $\mathcal{H}$ is a functional $\mathbb{\hat
{E}}:\mathcal{H}\rightarrow\mathbb{R}$ satisfying the following properties:
for all $X,Y\in\mathcal{H}$, we have

\begin{description}
\item[(a)] Monotonicity: If $X\geq Y$ then $\mathbb{\hat{E}}[X]\geq
\mathbb{\hat{E}}[Y]$;

\item[(b)] Constant preservation: $\mathbb{\hat{E}}[c]=c$;

\item[(c)] Sub-additivity: $\mathbb{\hat{E}}[X+Y]\leq\mathbb{\hat{E}%
}[X]+\mathbb{\hat{E}}[Y]$;

\item[(d)] Positive homogeneity: $\mathbb{\hat{E}}[\lambda X]=\lambda
\mathbb{\hat{E}}[X]$ for each $\lambda\geq0$.
\end{description}

$(\Omega,\mathcal{H},\mathbb{\hat{E}})$ is called a sublinear expectation space.
\end{definition}

\begin{definition}
\label{def2.2} Let $X_{1}$ and $X_{2}$ be two $n$-dimensional random vectors
defined respectively in sublinear expectation spaces $(\Omega_{1}%
,\mathcal{H}_{1},\mathbb{\hat{E}}_{1})$ and $(\Omega_{2},\mathcal{H}%
_{2},\mathbb{\hat{E}}_{2})$. They are called identically distributed, denoted
by $X_{1}\overset{d}{=}X_{2}$, if $\mathbb{\hat{E}}_{1}[\varphi(X_{1}%
)]=\mathbb{\hat{E}}_{2}[\varphi(X_{2})]$, for all$\ \varphi\in C_{l.Lip}%
(\mathbb{R}^{n})$, where $C_{l.Lip}(\mathbb{R}^{n})$ is the space of real
continuous functions defined on $\mathbb{R}^{n}$ such that
\[
|\varphi(x)-\varphi(y)|\leq C(1+|x|^{k}+|y|^{k})|x-y|\ \text{\ for
all}\ x,y\in\mathbb{R}^{n},
\]
where $k$ and $C$ depend only on $\varphi$.
\end{definition}

\begin{definition}
\label{def2.3} In a sublinear expectation space $(\Omega,\mathcal{H}%
,\mathbb{\hat{E}})$, a random vector $Y=(Y_{1},\cdot\cdot\cdot,Y_{n})$,
$Y_{i}\in\mathcal{H}$, is said to be independent of another random vector
$X=(X_{1},\cdot\cdot\cdot,X_{m})$, $X_{i}\in\mathcal{H}$ under $\mathbb{\hat
{E}}[\cdot]$, denoted by $Y\bot X$, if for every test function $\varphi\in
C_{l.Lip}(\mathbb{R}^{m}\times\mathbb{R}^{n})$ we have $\mathbb{\hat{E}%
}[\varphi(X,Y)]=\mathbb{\hat{E}}[\mathbb{\hat{E}}[\varphi(x,Y)]_{x=X}]$.
\end{definition}

\begin{definition}
\label{def2.4} ($G$-normal distribution) A $d$-dimensional random vector
$X=(X_{1},\cdot\cdot\cdot,X_{d})$ in a sublinear expectation space
$(\Omega,\mathcal{H},\mathbb{\hat{E}})$ is called $G$-normally distributed if
for each $a,b\geq0$ we have
\[
aX+b\bar{X}\overset{d}{=}\sqrt{a^{2}+b^{2}}X,
\]
where $\bar{X}$ is an independent copy of $X$, i.e., $\bar{X}\overset{d}{=}X$
and $\bar{X}\bot X$. Here the letter $G$ denotes the function
\[
G(A):=\frac{1}{2}\mathbb{\hat{E}}[\langle AX,X\rangle]:\mathbb{S}%
_{d}\rightarrow\mathbb{R},
\]
where $\mathbb{S}_{d}$ denotes the collection of $d\times d$ symmetric matrices.
\end{definition}

Peng \cite{P08b} showed that $X=(X_{1},\cdot\cdot\cdot,X_{d})$ is $G$-normally
distributed if and only if for each $\varphi\in C_{l.Lip}(\mathbb{R}^{d})$,
$u(t,x):=\mathbb{\hat{E}}[\varphi(x+\sqrt{t}X)]$, $(t,x)\in\lbrack
0,\infty)\times\mathbb{R}^{d}$, is the solution of the following $G$-heat
equation:%
\[
\partial_{t}u-G(D_{x}^{2}u)=0,\ u(0,x)=\varphi(x).
\]

The function $G(\cdot):\mathbb{S}_{d}\rightarrow\mathbb{R}$ is a monotonic,
sublinear mapping on $\mathbb{S}_{d}$ and $G(A)=\frac{1}{2}\mathbb{\hat{E}%
}[\langle AX,X\rangle]\leq\frac{1}{2}|A|\mathbb{\hat{E}}[|X|^{2}]=:\frac{1}%
{2}|A|\bar{\sigma}^{2}$ implies that there exists a bounded, convex and closed
subset $\Gamma\subset\mathbb{S}_{d}^{+}$ such that
\[
G(A)=\frac{1}{2}\sup_{\gamma\in\Gamma}\mathrm{tr}[\gamma A],
\]
where $\mathbb{S}_{d}^{+}$ denotes the collection of nonnegative elements in
$\mathbb{S}_{d}$.

In this paper, we only consider non-degenerate $G$-normal distribution, i.e.,
there exists some $\underline{\sigma}^{2}>0$ such that $G(A)-G(B)\geq
\underline{\sigma}^{2}\mathrm{tr}[A-B]$ for any $A\geq B$.

\begin{definition}
\label{def2.5} i) Let $\Omega_{T}=C_{0}([0,T];\mathbb{R}^{d})$, the space of
real valued continuous functions on $[0,T]$ with $\omega_{0}=0$, be endowed
with the supremum norm and let $B_{t}(\omega)=\omega_{t}$ be the canonical
process. Set
\[
\mathcal{H}_{T}^{0}:=\{ \varphi(B_{t_{1}},...,B_{t_{n}}):n\geq1,t_{1}%
,...,t_{n}\in\lbrack0,T],\varphi\in C_{l.Lip}(\mathbb{R}^{d\times n})\}.
\]
Let $G:\mathbb{S}_{d}\rightarrow\mathbb{R}$ be a given monotonic and sublinear
function. $G$-expectation is a sublinear expectation defined by
\[
\mathbb{\hat{E}}[X]=\mathbb{\tilde{E}}[\varphi(\sqrt{t_{1}-t_{0}}\xi_{1}%
,\cdot\cdot\cdot,\sqrt{t_{m}-t_{m-1}}\xi_{m})],
\]
for all $X=\varphi(B_{t_{1}}-B_{t_{0}},B_{t_{2}}-B_{t_{1}},\cdot\cdot
\cdot,B_{t_{m}}-B_{t_{m-1}})$, where $\xi_{1},\cdot\cdot\cdot,\xi_{n}$ are
identically distributed $d$-dimensional $G$-normally distributed random
vectors in a sublinear expectation space $(\tilde{\Omega},\tilde{\mathcal{H}%
},\mathbb{\tilde{E}})$ such that $\xi_{i+1}$ is independent of $(\xi_{1}%
,\cdot\cdot\cdot,\xi_{i})$ for every $i=1,\cdot\cdot\cdot,m-1$. The
corresponding canonical process $B_{t}=(B_{t}^{i})_{i=1}^{d}$ is called a
$G$-Brownian motion.

ii) Let us define the conditional $G$-expectation $\mathbb{\hat{E}}_{t}$ of
$\xi\in\mathcal{H}_{T}^{0}$ knowing $\mathcal{H}_{t}^{0}$, for $t\in
\lbrack0,T]$. Without loss of generality we can assume that $\xi$ has the
representation $\xi=\varphi(B_{t_{1}}-B_{t_{0}},B_{t_{2}}-B_{t_{1}},\cdot
\cdot\cdot,B_{t_{m}}-B_{t_{m-1}})$ with $t=t_{i}$, for some $1\leq i\leq m$,
and we put
\[
\mathbb{\hat{E}}_{t_{i}}[\varphi(B_{t_{1}}-B_{t_{0}},B_{t_{2}}-B_{t_{1}}%
,\cdot\cdot\cdot,B_{t_{m}}-B_{t_{m-1}})]
\]%
\[
=\tilde{\varphi}(B_{t_{1}}-B_{t_{0}},B_{t_{2}}-B_{t_{1}},\cdot\cdot
\cdot,B_{t_{i}}-B_{t_{i-1}}),
\]
where
\[
\tilde{\varphi}(x_{1},\cdot\cdot\cdot,x_{i})=\mathbb{\hat{E}}[\varphi
(x_{1},\cdot\cdot\cdot,x_{i},B_{t_{i+1}}-B_{t_{i}},\cdot\cdot\cdot,B_{t_{m}%
}-B_{t_{m-1}})].
\]

\end{definition}

Define $\Vert\xi\Vert_{p,G}=(\mathbb{\hat{E}}[|\xi|^{p}])^{1/p}$ for $\xi
\in\mathcal{H}_{T}^{0}$ and $p\geq1$. Then \textmd{for all}$\ t\in\lbrack
0,T]$, $\mathbb{\hat{E}}_{t}[\cdot]$ is a continuous mapping on $\mathcal{H}%
_{T}^{0}$ w.r.t. the norm $\Vert\cdot\Vert_{1,G}$. Therefore it can be
extended continuously to the completion $L_{G}^{1}(\Omega_{T})$ of
$\mathcal{H}_{T}^{0}$ under the norm $\Vert\cdot\Vert_{1,G}$.

Let $L_{ip}(\Omega_{T}):=\{ \varphi(B_{t_{1}},...,B_{t_{n}}):n\geq
1,t_{1},...,t_{n}\in\lbrack0,T],\varphi\in C_{b.Lip}(\mathbb{R}^{d\times
n})\},$ where $C_{b.Lip}(\mathbb{R}^{d\times n})$ denotes the set of bounded
Lipschitz functions on $\mathbb{R}^{d\times n}$. Denis et al. \cite{DHP11}
proved that the completions of $C_{b}(\Omega_{T})$ (the set of bounded
continuous function on $\Omega_{T}$), $\mathcal{H}_{T}^{0}$ and $L_{ip}%
(\Omega_{T})$ under $\Vert\cdot\Vert_{p,G}$ are the same and we denote them by
$L_{G}^{p}(\Omega_{T})$.

For each fixed $\mathbf{a}\in\mathbb{R}^{d}$, $B_{t}^{\mathbf{a}}%
=\langle\mathbf{a},B_{t}\rangle$ is a $1$-dimensional $G_{\mathbf{a}}%
$-Brownian motion, where $G_{\mathbf{a}}(\alpha)=\frac{1}{2}(\sigma
_{\mathbf{aa}^{T}}^{2}\alpha^{+}-\sigma_{-\mathbf{aa}^{T}}^{2}\alpha^{-})$,
$\sigma_{\mathbf{aa}^{T}}^{2}=2G(\mathbf{aa}^{T})$, $\sigma_{-\mathbf{aa}^{T}%
}^{2}=-2G(-\mathbf{aa}^{T})$. Let $\pi_{t}^{N}=\{t_{0}^{N},\cdots,t_{N}^{N}%
\}$, $N=1,2,\cdots$, be a sequence of partitions of $[0,t]$ such that $\mu
(\pi_{t}^{N})=\max\{|t_{i+1}^{N}-t_{i}^{N}|:i=0,\cdots,N-1\} \rightarrow0$,
the quadratic variation process of $B^{\mathbf{a}}$ is defined by%
\[
\langle B^{\mathbf{a}}\rangle_{t}=\lim_{\mu(\pi_{t}^{N})\rightarrow0}%
\sum_{j=0}^{N-1}(B_{t_{j+1}^{N}}^{\mathbf{a}}-B_{t_{j}^{N}}^{\mathbf{a}}%
)^{2}.
\]
For each fixed $\mathbf{a}$, $\mathbf{\bar{a}}\in\mathbb{R}^{d}$, the mutual
variation process of $B^{\mathbf{a}}$ and $B^{\mathbf{\bar{a}}}$ is defined by%
\[
\langle B^{\mathbf{a}},B^{\mathbf{\bar{a}}}\rangle_{t}=\frac{1}{4}[\langle
B^{\mathbf{a}+\mathbf{\bar{a}}}\rangle_{t}-\langle B^{\mathbf{a}%
-\mathbf{\bar{a}}}\rangle_{t}].
\]

\begin{definition}
\label{def2.6} Let $M_{G}^{0}(0,T)$ be the collection of processes in the
following form: for a given partition $\{t_{0},\cdot\cdot\cdot,t_{N}\}=\pi
_{T}$ of $[0,T]$,
\[
\eta_{t}(\omega)=\sum_{j=0}^{N-1}\xi_{j}(\omega)I_{[t_{j},t_{j+1})}(t),
\]
where $\xi_{i}\in L_{ip}(\Omega_{t_{i}})$, $i=0,1,2,\cdot\cdot\cdot,N-1$. For
$p\geq1$ and $\eta\in M_{G}^{0}(0,T)$, let $\Vert\eta\Vert_{H_{G}^{p}}=\{
\mathbb{\hat{E}}[(\int_{0}^{T}|\eta_{s}|^{2}ds)^{p/2}]\}^{1/p}$, $\Vert
\eta\Vert_{M_{G}^{p}}=\{ \mathbb{\hat{E}}[\int_{0}^{T}|\eta_{s}|^{p}%
ds]\}^{1/p}$ and denote by $H_{G}^{p}(0,T)$, $M_{G}^{p}(0,T)$ the completions
of $M_{G}^{0}(0,T)$ under the norms $\Vert\cdot\Vert_{H_{G}^{p}}$, $\Vert
\cdot\Vert_{M_{G}^{p}}$ respectively.
\end{definition}

\begin{theorem}
\label{the2.7} (\cite{DHP11,HP09}) There exists a weakly compact set
$\mathcal{P}\subset\mathcal{M}_{1}(\Omega_{T})$, the set of probability
measures on $(\Omega_{T},\mathcal{B}(\Omega_{T}))$, such that
\[
\mathbb{\hat{E}}[\xi]=\sup_{P\in\mathcal{P}}E_{P}[\xi]\ \ \text{for
\ all}\ \xi\in\mathcal{H}_{T}^{0}.
\]
$\mathcal{P}$ is called a set that represents $\mathbb{\hat{E}}$.
\end{theorem}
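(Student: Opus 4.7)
The plan is to realize $\mathbb{\hat{E}}$ as the upper envelope of a tight family of laws obtained from stochastic control, and then invoke Prokhorov. First I would exploit the representation $G(A)=\tfrac{1}{2}\sup_{\gamma\in\Gamma}\mathrm{tr}[\gamma A]$ to build concrete candidate measures. On an auxiliary Wiener space $(\tilde{\Omega},\tilde{\mathcal{F}},\tilde{P})$ carrying a standard $d$-dimensional Brownian motion $W$, let $\mathcal{A}$ be the class of $\tilde{P}$-progressively measurable processes $\theta$ taking values in $\Gamma^{1/2}:=\{\gamma^{1/2}:\gamma\in\Gamma\}$. For each $\theta\in\mathcal{A}$, set $X^{\theta}_{t}:=\int_{0}^{t}\theta_{s}dW_{s}$ and let $P_{\theta}\in\mathcal{M}_{1}(\Omega_{T})$ be its law on the canonical path space. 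Define $\mathcal{P}_{0}:=\{P_{\theta}:\theta\in\mathcal{A}\}$ and let $\mathcal{P}$ be the weak closure of $\mathcal{P}_{0}$.

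Next I would verify the representation on cylindrical random variables. For $\varphi\in C_{l.Lip}(\mathbb{R}^{d\times n})$ and $0=t_{0}<t_{1}<\cdots<t_{n}\leq T$, the quantity $\mathbb{\hat{E}}[\varphi(B_{t_{1}},\ldots,B_{t_{n}})]$ is computed by iterated backward evaluation through the $G$-heat equation $\partial_{t}u-G(D_{x}^{2}u)=0$, so on each subinterval $[t_{i-1},t_{i}]$ it is a viscosity solution of a terminal value problem for this equation. On the other hand, by classical stochastic control theory the value function $\sup_{\theta\in\mathcal{A}}E^{\tilde{P}}[\varphi(X^{\theta}_{t_{1}},\ldots,X^{\theta}_{t_{n}})]$ satisfies exactly the same HJB equation with the same terminal data on each subinterval. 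Non-degeneracy $G(A)-G(B)\geq\underline{\sigma}^{2}\mathrm{tr}[A-B]$ gives a standard comparison principle, so the two solutions coincide and
\[
\mathbb{\hat{E}}[\varphi(B_{t_{1}},\ldots,B_{t_{n}})]=\sup_{P\in\mathcal{P}_{0}}E_{P}[\varphi(\omega_{t_{1}},\ldots,\omega_{t_{n}})].
\]

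Then I would establish weak compactness of $\mathcal{P}$. Since $\Gamma$ is bounded, there is $c>0$ with $|\theta_{s}|\leq c$ for every $\theta\in\mathcal{A}$, so Burkholder-Davis-Gundy yields $E^{\tilde{P}}[|X^{\theta}_{t}-X^{\theta}_{s}|^{2k}]\leq C_{k}|t-s|^{k}$ uniformly in $\theta$ for every integer $k\geq 1$. Kolmogorov's criterion then gives uniform tightness of $\mathcal{P}_{0}$, and since all members have $\omega_{0}=0$, Prokhorov's theorem makes $\mathcal{P}_{0}$ relatively weakly compact; its weak closure $\mathcal{P}$ is therefore weakly compact. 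Finally, to extend the representation from $\mathcal{P}_{0}$ to $\mathcal{P}$ for every $\xi\in\mathcal{H}_{T}^{0}$, one uses that $\xi=\varphi(B_{t_{1}},\ldots,B_{t_{n}})$ with $\varphi\in C_{l.Lip}$ together with the uniform moment bounds above to pass $E_{P_{n}}[\xi]\to E_{P}[\xi]$ along weakly convergent sequences $P_{n}\to P$ in $\mathcal{P}$, so that $\sup_{P\in\mathcal{P}}E_{P}[\xi]=\sup_{P\in\mathcal{P}_{0}}E_{P}[\xi]=\mathbb{\hat{E}}[\xi]$.

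The main obstacle I anticipate is the PDE identification in the second paragraph: one needs to check carefully that the nonlinear semigroup defining $\mathbb{\hat{E}}$ on cylindrical functionals and the dynamic programming semigroup of the control problem both produce viscosity solutions of the same HJB equation on each interval $[t_{i-1},t_{i}]$, with matching terminal data at the breakpoints $t_{i}$. Everything else (tightness, Prokhorov, uniform integrability to pass from $\mathcal{P}_{0}$ to $\mathcal{P}$) is routine once this identification is in hand and the non-degeneracy of $G$ is used to secure uniqueness of the viscosity solutions.
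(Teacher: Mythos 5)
The paper does not prove this theorem at all: it is quoted verbatim from the cited references \cite{DHP11,HP09}, so there is no in-paper argument to compare against. Your proposal is, in substance, the standard proof from those references (especially Hu--Peng 2009): represent $\mathbb{\hat{E}}$ on cylinder functionals as the value of the control problem over laws $P_{\theta}$ of $\int_{0}^{\cdot}\theta_{s}dW_{s}$ with $\theta$ valued in $\Gamma^{1/2}$, identify the two quantities interval by interval through the $G$-heat/HJB equation, get tightness from the uniform boundedness of $\Gamma$ via BDG and Kolmogorov, and close up under weak limits using the uniform moment bounds to handle the polynomial growth of $\varphi\in C_{l.Lip}$. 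The outline is sound; the one step that carries real content, as you note, is the identification on multi-time cylinder functionals: this is not a single Markovian HJB problem but requires a dynamic programming/pasting-of-controls argument (freeze the values at the breakpoints $t_{1},\dots,t_{i-1}$, show the frozen value function solves $\partial_{t}v+G(D_{x}^{2}v)=0$ with the previously computed function as terminal datum, and verify that the supremum over adapted $\theta$ commutes with this backward iteration), together with a comparison principle for the $G$-heat equation in the class of polynomially growing solutions. Two minor remarks: non-degeneracy of $G$ is not needed for that comparison principle (degenerate ellipticity and continuity of $G$ suffice, which matters since the representation theorem in the references does not assume $\underline{\sigma}^{2}>0$), and when you pass from $\mathcal{P}_{0}$ to its weak closure you should state explicitly that the uniform moment bounds give uniform integrability of the unbounded cylinder functionals along weakly convergent sequences, which you do. With those details filled in, the argument is a faithful reconstruction of the cited proof.
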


Let $\mathcal{P}$ be a weakly compact set that represents $\mathbb{\hat{E}}$.
For this $\mathcal{P}$, we define capacity%
\[
c(A):=\sup_{P\in\mathcal{P}}P(A),\ A\in\mathcal{B}(\Omega_{T}).
\]
A set $A\subset\Omega_{T}$ is polar if $c(A)=0$. A property holds
\textquotedblleft quasi-surely\textquotedblright\ (q.s. for short) if it holds
outside a polar set. In the following, we do not distinguish two random
variables $X$ and $Y$ if $X=Y$ q.s.. We set%
\[
\mathbb{L}^{p}(\Omega_{t}):=\{X\in\mathcal{B}(\Omega_{t}):\sup_{P\in
\mathcal{P}}E_{P}[|X|^{p}]<\infty\} \ \text{for}\ p\geq1.
\]
It is important to note that $L_{G}^{p}(\Omega_{t})\subset\mathbb{L}%
^{p}(\Omega_{t})$. We extend $G$-expectation $\mathbb{\hat{E}}$ to
$\mathbb{L}^{p}(\Omega_{t})$ and still denote it by $\mathbb{\hat{E}}$, for
each $X\in$ $\mathbb{L}^{1}(\Omega_{T})$, we set%
\[
\mathbb{\hat{E}}[X]=\sup_{P\in\mathcal{P}}E_{P}[X].
\]
For $p\geq1$, $\mathbb{L}^{p}(\Omega_{t})$ is a Banach space under the norm
$(\mathbb{\hat{E}}[|\cdot|^{p}])^{1/p}$.

Set%
\[
\mathbb{L}_{G}^{0,p,t}(\Omega_{T}):=\{ \xi=\sum_{i=1}^{n}\eta_{i}I_{A_{i}%
}:A_{i}\in\mathcal{B}(\Omega_{t}),\eta_{i}\in L_{G}^{p}(\Omega),n\in
\mathbb{N}\},
\]
we define the corresponding conditional $G$-expectation, still denoted by
$\mathbb{\hat{E}}_{s}[\cdot]$, by setting%
\[
\mathbb{\hat{E}}_{s}[\sum_{i=1}^{n}\eta_{i}I_{A_{i}}]:=\sum_{i=1}%
^{n}\mathbb{\hat{E}}_{s}[\eta_{i}]I_{A_{i}}\ \text{\ for}\ s\geq t.
\]

\begin{proposition}
\label{proA.8} (\cite{HJPS}) For each $\xi,\eta\in\mathbb{L}_{G}%
^{0,1,t}(\Omega_{T})$, we have

\begin{description}
\item[(i)] Monotonicity: If $\xi\leq\eta$, then $\mathbb{\hat{E}}_{s}[\xi
]\leq\mathbb{\hat{E}}_{s}[\eta]$ for any $s\geq t$;

\item[(ii)] Constant preserving: If $\xi\in\mathbb{L}_{G}^{0,1,t}(\Omega_{t})
$, then $\mathbb{\hat{E}}_{t}[\xi]=\xi$;

\item[(iii)] Sub-additivity: $\mathbb{\hat{E}}_{s}[\xi+\eta]\leq
\mathbb{\hat{E}}_{s}[\xi]+\mathbb{\hat{E}}_{s}[\eta]$ for any $s\geq t$;

\item[(iv)] Positive homogeneity: If $\xi\in\mathbb{L}_{G}^{0,\infty,t}%
(\Omega_{t})$ and $\xi\geq0$, then $\mathbb{\hat{E}}_{t}[\xi\eta
]=\xi\mathbb{\hat{E}}_{t}[\eta]$;

\item[(v)] Consistency: For $t\leq s\leq r$, we have $\mathbb{\hat{E}}%
_{s}[\mathbb{\hat{E}}_{r}[\xi]]=\mathbb{\hat{E}}_{s}[\xi]$.

\item[(vi)] $\mathbb{\hat{E}}[\mathbb{\hat{E}}_{t}[\xi]]=\mathbb{\hat{E}}%
[\xi]$.
\end{description}
\end{proposition}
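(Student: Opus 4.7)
The strategy is to extend each property of conditional $G$-expectation from $L_G^1(\Omega_T)$ to $\mathbb{L}_G^{0,1,t}(\Omega_T)$ by reducing to a canonical disjoint representation. First I would show that any $\xi=\sum_{i=1}^n \eta_i I_{A_i}$ in $\mathbb{L}_G^{0,1,t}(\Omega_T)$ can be rewritten as $\sum_k \tilde{\eta}_k I_{C_k}$ with $\{C_k\}\subset\mathcal{B}(\Omega_t)$ a disjoint refinement of $\{A_i\}$, and that the prescription $\mathbb{\hat{E}}_s[\xi]:=\sum_k \mathbb{\hat{E}}_s[\tilde{\eta}_k] I_{C_k}$ is independent of the disjoint representation chosen. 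The basic tool here is the local identity $I_A \mathbb{\hat{E}}_s[\eta]=I_A \mathbb{\hat{E}}_s[I_A\eta]$ q.s.\ for $A\in\mathcal{B}(\Omega_t)$, $s\geq t$, $\eta\in L_G^1(\Omega_T)$, which follows from positive homogeneity of $\mathbb{\hat{E}}_s$ on $L_G^1(\Omega_T)$ applied to the nonnegative bounded factor $I_A$.

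Given well-definedness, items (i)--(iv) follow cellwise on a common disjoint refinement of the representations of $\xi$ and $\eta$. For (i), I would first replace each cell value of $\xi$ by its minimum with the cell value of $\eta$; this leaves $\xi$ unchanged on each cell while making the inequality hold pointwise, so cellwise monotonicity of $\mathbb{\hat{E}}_s$ on $L_G^1(\Omega_T)$ gives the conclusion. Property (ii) is immediate since each $\eta_i\in L_G^1(\Omega_t)$ is fixed by $\mathbb{\hat{E}}_t$. Properties (iii) and (iv) reduce to cellwise sub-additivity and positive homogeneity on $L_G^1(\Omega_T)$, where for (iv) the hypothesis $\xi\geq 0$ lets one arrange each cell value of $\xi$ to be nonnegative. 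For (v), working on a common disjoint refinement reduces the claim to the tower identity $\mathbb{\hat{E}}_s[\mathbb{\hat{E}}_r[\eta_i]]=\mathbb{\hat{E}}_s[\eta_i]$ on $L_G^1(\Omega_T)$, which is a standard property of conditional $G$-expectation.

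The main obstacle is (vi), since the unconditional $\mathbb{\hat{E}}$ is only sub-additive and so does not distribute over the indicators. I would work with the representation $\mathbb{\hat{E}}[\,\cdot\,]=\sup_{P\in\mathcal{P}}E_P[\,\cdot\,]$ on a disjoint representation $\xi=\sum_i \eta_i I_{A_i}$. The inequality $\mathbb{\hat{E}}[\mathbb{\hat{E}}_t[\xi]]\geq\mathbb{\hat{E}}[\xi]$ follows from the classical tower property under each $P\in\mathcal{P}$ together with $\mathbb{\hat{E}}_t[\eta_i]\geq E_P[\eta_i\mid\mathcal{B}(\Omega_t)]$ $P$-a.s. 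For the reverse inequality I expect to rely on the stability of $\mathcal{P}$ under pasting: for any $\epsilon>0$, one selects $P_i\in\mathcal{P}$ whose classical conditional expectation of $\eta_i$ is within $\epsilon$ of $\mathbb{\hat{E}}_t[\eta_i]$, and then pastes these measures along the disjoint partition $\{A_i\}\subset\mathcal{B}(\Omega_t)$ to build $P^{\ast}\in\mathcal{P}$ with $E_{P^{\ast}}[\xi]\geq E_P[\mathbb{\hat{E}}_t[\xi]]-\epsilon$; taking suprema over $P$ and sending $\epsilon\to 0$ yields the reverse bound.
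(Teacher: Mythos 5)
The paper itself gives no proof of this proposition (it is quoted from \cite{HJPS}, where the argument goes through the representation of the conditional $G$-expectation as an essential supremum of classical conditional expectations over $\mathcal{P}(t,P)$, plus the pasting stability of $\mathcal{P}$). Your overall architecture --- disjoint refinement, cellwise verification of (i)--(v), pasting of measures for (vi) --- is the right one and matches that standard route. But there is a genuine gap at the foundation: your ``basic tool'' $I_{A}\mathbb{\hat{E}}_{s}[\eta]=I_{A}\mathbb{\hat{E}}_{s}[I_{A}\eta]$ cannot be obtained from positive homogeneity of $\mathbb{\hat{E}}_{s}$ on $L_{G}^{1}(\Omega_{T})$. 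For a general $A\in\mathcal{B}(\Omega_{t})$ the indicator $I_{A}$ is not quasi-continuous, so $I_{A}\notin L_{G}^{\infty}(\Omega_{t})$ and $I_{A}\eta\notin L_{G}^{1}(\Omega_{T})$; hence $\mathbb{\hat{E}}_{s}[I_{A}\eta]$ is not even defined in the $L_{G}$-sense prior to the extension you are constructing, and positive homogeneity with Borel-indicator multipliers is exactly item (iv) of the proposition being proved --- the appeal is circular. What you actually need is the locality property: if $\eta=\eta^{\prime}$ q.s.\ on $A\in\mathcal{B}(\Omega_{t})$, then $\mathbb{\hat{E}}_{s}[\eta]=\mathbb{\hat{E}}_{s}[\eta^{\prime}]$ q.s.\ on $A$ for $s\geq t$. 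This follows from the representation $\mathbb{\hat{E}}_{s}[\eta]=\operatorname{ess\,sup}^{P}_{Q\in\mathcal{P}(s,P)}E_{Q}[\eta\,|\,\mathcal{B}(\Omega_{s})]$ $P$-a.s.\ for every $P\in\mathcal{P}$, where $\mathcal{P}(s,P)=\{Q\in\mathcal{P}:Q=P\ \text{on}\ \mathcal{B}(\Omega_{s})\}$: since $A\in\mathcal{B}(\Omega_{s})$ and each such $Q$ agrees with $P$ on $\mathcal{B}(\Omega_{s})$, one has $E_{Q}[\eta|\mathcal{B}(\Omega_{s})]I_{A}=E_{Q}[\eta I_{A}|\mathcal{B}(\Omega_{s})]$, and the claim follows. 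Once locality is available, your cellwise arguments for (i)--(v) are fine.

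The same representation is also what underwrites the two assertions you leave unjustified in (vi). The easy direction uses $\mathbb{\hat{E}}_{t}[\eta_{i}]\geq E_{P}[\eta_{i}|\mathcal{B}(\Omega_{t})]$ $P$-a.s., which is immediate from the essential-supremum formula (with $Q=P$) but is not obvious otherwise. For the reverse direction, ``selecting $P_{i}$ whose conditional expectation is within $\epsilon$ of $\mathbb{\hat{E}}_{t}[\eta_{i}]$'' needs care: a single measure achieving the essential supremum up to $\epsilon$ pointwise need not exist; instead one uses the upward directedness of $\{E_{Q}[\eta_{i}|\mathcal{B}(\Omega_{t})]:Q\in\mathcal{P}(t,P)\}$ (again a pasting consequence) to find $Q_{i}\in\mathcal{P}(t,P)$ with $E_{P}[\mathbb{\hat{E}}_{t}[\eta_{i}]I_{A_{i}}]\leq E_{Q_{i}}[\eta_{i}I_{A_{i}}]+\epsilon$, and the measures you paste along $\{A_{i}\}$ must all agree with $P$ on $\mathcal{B}(\Omega_{t})$ (you paste $P$ before time $t$ with the $Q_{i}$ after $t$), not arbitrary elements of $\mathcal{P}$. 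With these repairs --- i.e.\ making the essential-supremum representation and the stability of $\mathcal{P}$ under pasting the explicit foundation, as in \cite{HJPS} --- your proof goes through.
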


Let $\mathbb{L}_{G}^{p,t}(\Omega_{T})$ be the completion of $\mathbb{L}%
_{G}^{0,p,t}(\Omega_{T})$ under the norm $(\mathbb{\hat{E}}[|\cdot
|^{p}])^{1/p}$. Clearly, the conditional $G$-expectation can be extended
continuously to $\mathbb{L}_{G}^{1,t}(\Omega_{T})$.

Set%
\[
\mathbb{M}^{p,0}(0,T):=\{ \eta_{t}=\sum_{i=0}^{N-1}\xi_{t_{i}}I_{[t_{i}%
,t_{i+1})}(t):0=t_{0}<\cdots<t_{N}=T,\xi_{t_{i}}\in\mathbb{L}^{p}%
(\Omega_{t_{i}})\}.
\]
For $p\geq1$, we denote by $\mathbb{M}^{p}(0,T)$, $\mathbb{H}^{p}(0,T)$,
$\mathbb{S}^{p}(0,T)$ the completion of $\mathbb{M}^{p,0}(0,T)$ under the norm
$||\eta||_{\mathbb{M}^{p}}:=(\mathbb{\hat{E}}[\int_{0}^{T}|\eta_{t}%
|^{p}dt])^{1/p}$, $||\eta||_{\mathbb{H}^{p}}:=\{ \mathbb{\hat{E}}[(\int%
_{0}^{T}|\eta_{t}|^{2}dt)^{p/2}]\}^{1/p}$, $||\eta||_{\mathbb{S}^{p}%
}:=(\mathbb{\hat{E}}[\sup_{t\in\lbrack0,T]}|\eta_{t}|^{p}])^{1/p}$
respectively. Following Li and Peng \cite{L-P}, for each $\eta\in
\mathbb{H}^{p}(0,T)$ with $p\geq1$, we can define It\^{o}'s integral $\int%
_{0}^{T}\eta_{s}dB_{s}$. Moreover, by Proposition 2.10 in \cite{L-P} and
classical Burkholder-Davis-Gundy Inequality, the following properties hold.

\begin{proposition}
\label{proA.5} For each $\eta,\theta\in\mathbb{H}^{\alpha}(0,T)$ with
$\alpha\geq1$ and $p>0$, $\xi\in\mathbb{L}^{\infty}(\Omega_{t})$, we have%
\[
\mathbb{\hat{E}}[\int_{0}^{T}\eta_{s}dB_{s}]=0,
\]%
\[
\ \underline{\sigma}^{p}c_{p}\mathbb{\hat{E}}[(\int_{0}^{T}|\eta_{s}%
|^{2}ds)^{p/2}]\leq\mathbb{\hat{E}}[\sup_{t\in\lbrack0,T]}|\int_{0}^{t}%
\eta_{s}dB_{s}|^{p}]\leq\bar{\sigma}^{p}C_{p}\mathbb{\hat{E}}[(\int_{0}%
^{T}|\eta_{s}|^{2}ds)^{p/2}],
\]%
\[
\int_{t}^{T}(\xi\eta_{s}+\theta_{s})dB_{s}=\xi\int_{t}^{T}\eta_{s}dB_{s}%
+\int_{t}^{T}\theta_{s}dB_{s},
\]
where $0<c_{p}<C_{p}<\infty$ are constants.
\end{proposition}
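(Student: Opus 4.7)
The plan is to use the dual representation $\mathbb{\hat{E}}[\cdot] = \sup_{P \in \mathcal{P}} E_P[\cdot]$ from Theorem \ref{the2.7} to transport classical It\^o calculus under each $P \in \mathcal{P}$ into the $G$-framework, and then extend from simple integrands in $\mathbb{M}^{p,0}(0,T)$ to general $\eta \in \mathbb{H}^{\alpha}(0,T)$ by density, with the Burkholder--Davis--Gundy (BDG) bound doing the work of continuity. Under each $P \in \mathcal{P}$, the canonical process $B$ is a continuous martingale whose quadratic variation $\langle B \rangle^{P}$ satisfies $\underline{\sigma}^{2} I\, ds \leq d\langle B \rangle^{P}_{s} \leq \bar{\sigma}^{2} I\, ds$, and by Proposition 2.10 of \cite{L-P} the $G$-stochastic integral of $\eta \in \mathbb{H}^{\alpha}(0,T)$ coincides $P$-a.s.\ with the classical It\^o integral, so classical results are available path by path.

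For the mean-zero identity, I would first treat simple $\eta = \sum_{i} \eta_{t_{i}} I_{[t_{i},t_{i+1})}$. Using the fact that $\mathbb{\hat{E}}[\pm \langle a, B_{t_{i+1}} - B_{t_{i}} \rangle] = 0$ for any $a \in \mathbb{R}^{d}$ (since the increment is $G$-normal with zero mean), iterating the conditional expectation $\mathbb{\hat{E}}_{t_{i}}$ and exploiting the independence of $\eta_{t_{i}}$ from the future increment yields $\mathbb{\hat{E}}[\pm \int_{0}^{T} \eta_{s} dB_{s}] = 0$ on simple processes. The BDG upper bound (Step below) then ensures that $\eta \mapsto \int_{0}^{T} \eta_{s} dB_{s}$ is continuous from $\mathbb{H}^{\alpha}$ into $L_{G}^{1}$, so the identity extends to all $\eta \in \mathbb{H}^{\alpha}$. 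The linearity identity is similarly immediate for simple $\eta, \theta$ and $\xi \in \mathbb{L}^{\infty}(\Omega_{t})$ (each summand $\xi \eta_{t_{i}} (B_{t_{i+1}} - B_{t_{i}})$ is still well-defined because $\xi \eta_{t_{i}} \in \mathbb{L}^{p}(\Omega_{t_{i}})$ for $t_{i} \geq t$), and again extends to $\mathbb{H}^{\alpha}$ by continuity.

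For BDG itself, applying classical BDG under each $P \in \mathcal{P}$ gives
\[
c_{p} E_{P}\!\left[\left(\int_{0}^{T} \eta_{s}^{\top}\, d\langle B\rangle^{P}_{s}\, \eta_{s}\right)^{p/2}\right] \leq E_{P}\!\left[\sup_{t \in [0,T]} \left|\int_{0}^{t} \eta_{s} dB_{s}\right|^{p}\right] \leq C_{p} E_{P}\!\left[\left(\int_{0}^{T} \eta_{s}^{\top}\, d\langle B\rangle^{P}_{s}\, \eta_{s}\right)^{p/2}\right],
\]
and the uniform bounds $\underline{\sigma}^{2} I\, ds \leq d\langle B\rangle^{P}_{s} \leq \bar{\sigma}^{2} I\, ds$ convert the quadratic-variation integral into $\int_{0}^{T} |\eta_{s}|^{2} ds$ with $P$-independent constants. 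Taking $\sup_{P \in \mathcal{P}}$ yields the upper $G$-BDG bound directly, since $E_{P}[(\int_{0}^{T} |\eta_{s}|^{2} ds)^{p/2}] \leq \mathbb{\hat{E}}[(\int_{0}^{T} |\eta_{s}|^{2} ds)^{p/2}]$ uniformly in $P$. For the lower bound, use $\mathbb{\hat{E}}[\sup_{t} |\int_{0}^{t} \eta_{s} dB_{s}|^{p}] \geq E_{P}[\sup_{t} |\int_{0}^{t} \eta_{s} dB_{s}|^{p}] \geq \underline{\sigma}^{p} c_{p} E_{P}[(\int_{0}^{T} |\eta_{s}|^{2} ds)^{p/2}]$ for each $P$ and take $\sup_{P}$ on the rightmost term.

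The main obstacle is not any single computation but rather the simultaneous compatibility across all $P \in \mathcal{P}$: one needs the $G$-integral of $\eta \in \mathbb{H}^{\alpha}$ to agree $P$-a.s.\ with the classical It\^o integral for \emph{every} $P$, together with a q.s.\ continuous version of the integrated process, so that $P$-wise BDG can be lifted to a $G$-BDG estimate. I would cite Proposition 2.10 of \cite{L-P} for this compatibility, reducing the proof to the elementary simple-process arguments plus classical BDG applied under each $P$.
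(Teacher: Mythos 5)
Your proposal is correct and is essentially the paper's own justification: the paper states Proposition \ref{proA.5} without a detailed proof, appealing precisely to Proposition 2.10 of \cite{L-P} (the quasi-sure identification of the $G$-integral with the classical It\^{o} integral under each $P\in\mathcal{P}$) together with the classical Burkholder--Davis--Gundy inequality, and your argument—per-$P$ BDG with the bounds $\underline{\sigma}^{2}I\,ds\leq d\langle B\rangle^{P}_{s}\leq\bar{\sigma}^{2}I\,ds$, then $\sup_{P\in\mathcal{P}}$, plus the simple-process-and-density treatment of the mean-zero and linearity identities—is exactly the fleshed-out version of that one-line justification.
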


\begin{remark}
\label{nnrem1} If $\eta\in H_{G}^{\alpha}(0,T)$ with $\alpha\geq1$ and
$p\in(0,\alpha]$, then we can get $\sup_{u\in\lbrack t,T]}|\int_{t}^{u}%
\eta_{s}dB_{s}|^{p}\in L_{G}^{1}(\Omega_{T})$ and
\[
\underline{\sigma}^{p}c_{p}\mathbb{\hat{E}}_{t}[(\int_{t}^{T}|\eta_{s}%
|^{2}ds)^{p/2}]\leq\mathbb{\hat{E}}_{t}[\sup_{u\in\lbrack t,T]}|\int_{t}%
^{u}\eta_{s}dB_{s}|^{p}]\leq\bar{\sigma}^{p}C_{p}\mathbb{\hat{E}}_{t}%
[(\int_{t}^{T}|\eta_{s}|^{2}ds)^{p/2}].
\]

\end{remark}

\begin{definition}
\label{def2.9} A process $\{M_{t}\}$ with values in $L_{G}^{1}(\Omega_{T})$ is
called a $G$-martingale if $\mathbb{\hat{E}}_{s}[M_{t}]=M_{s}$ for any $s\leq
t$.
\end{definition}

Let $S_{G}^{0}(0,T)=\{h(t,B_{t_{1}\wedge t},\cdot\cdot\cdot,B_{t_{n}\wedge
t}):t_{1},\ldots,t_{n}\in\lbrack0,T],h\in C_{b,Lip}(\mathbb{R}^{n+1})\}$. For
$p\geq1$ and $\eta\in S_{G}^{0}(0,T)$, set $\Vert\eta\Vert_{S_{G}^{p}}=\{
\mathbb{\hat{E}}[\sup_{t\in\lbrack0,T]}|\eta_{t}|^{p}]\}^{\frac{1}{p}}$.
Denote by $S_{G}^{p}(0,T)$ the completion of $S_{G}^{0}(0,T)$ under the norm
$\Vert\cdot\Vert_{S_{G}^{p}}$.

We consider the following type of $G$-BSDEs (in this paper we always use Einstein convention):%
\begin{align}
Y_{t}  &  =\xi+\int_{t}^{T}f(s,Y_{s},Z_{s})ds+\int_{t}^{T}g_{ij}(s,Y_{s}%
,Z_{s})d\langle B^{i},B^{j}\rangle_{s}\nonumber\\
&  -\int_{t}^{T}Z_{s}dB_{s}-(K_{T}-K_{t}), \label{e3}%
\end{align} where

\[
f(t,\omega,y,z),g_{ij}(t,\omega,y,z):[0,T]\times\Omega_{T}\times
\mathbb{R}\times\mathbb{R}^{d}\rightarrow\mathbb{R}%
\]
satisfy the following properties:

\begin{description}
\item[(H1)] There exists some $\beta>1$ such that for any $y,z$,
$f(\cdot,\cdot,y,z),g_{ij}(\cdot,\cdot,y,z)\in M_{G}^{\beta}(0,T)$;

\item[(H2)] There exists some $L>0$ such that
\[
|f(t,y,z)-f(t,y^{\prime},z^{\prime})|+\sum_{i,j=1}^{d}|g_{ij}(t,y,z)-g_{ij}%
(t,y^{\prime},z^{\prime})|\leq L(|y-y^{\prime}|+|z-z^{\prime}|).
\]

\end{description}

For simplicity, we denote by $\mathfrak{S}_{G}^{\alpha}(0,T)$ the collection
of processes $(Y,Z,K)$ such that $Y\in S_{G}^{\alpha}(0,T)$, $Z\in
H_{G}^{\alpha}(0,T;\mathbb{R}^{d})$, $K$ is a decreasing $G$-martingale with
$K_{0}=0$ and $K_{T}\in L_{G}^{\alpha}(\Omega_{T})$.

\begin{definition}
\label{def3.1} Let $\xi\in L_{G}^{\beta}(\Omega_{T})$ and $f$ satisfy (H1) and
(H2) for some $\beta>1$. A triplet of processes $(Y,Z,K)$ is called a solution
of equation (\ref{e3}) if for some $1<\alpha\leq\beta$ the following
properties hold:

\begin{description}
\item[(a)] $(Y,Z,K)\in\mathfrak{S}_{G}^{\alpha}(0,T)$;

\item[(b)] $Y_{t}=\xi+\int_{t}^{T}f(s,Y_{s},Z_{s})ds+\int_{t}^{T}%
g_{ij}(s,Y_{s},Z_{s})d\langle B^{i},B^{j}\rangle_{s}-\int_{t}^{T}Z_{s}%
dB_{s}-(K_{T}-K_{t})$.
\end{description}
\end{definition}

\begin{theorem}
\label{the4.1} (\cite{HJPS}) Assume that $\xi\in L_{G}^{\beta}(\Omega_{T})$
and $f$, $g_{ij}$ satisfy (H1) and (H2) for some $\beta>1$. Then equation
(\ref{e3}) has a unique solution $(Y,Z,K)$. Moreover, for any $1<\alpha<\beta$
we have $Y\in S_{G}^{\alpha}(0,T)$, $Z\in H_{G}^{\alpha}(0,T;\mathbb{R}^{d})$
and $K_{T}\in L_{G}^{\alpha}(\Omega_{T})$.
\end{theorem}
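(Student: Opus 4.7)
The plan is to establish existence by a Picard iteration on $\mathfrak{S}_G^\alpha(0,T)$ and to get uniqueness through a priori estimates applied to the difference of two solutions. The key tool from the $G$-framework that replaces classical martingale representation is the Doob-type decomposition for $G$-martingales due to Song, which guarantees that every $G$-martingale of appropriate integrability admits a unique representation as a stochastic integral against $B$ plus a continuous decreasing $G$-martingale.

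The first step is to solve the special case where $f$ and $g_{ij}$ do not depend on $(y,z)$. Under (H1) one considers
$$M_t := \mathbb{\hat{E}}_t\Bigl[\xi + \int_0^T f(s)\,ds + \int_0^T g_{ij}(s)\,d\langle B^i,B^j\rangle_s\Bigr],$$
which defines a $G$-martingale in $\mathbb{L}_G^{1,t}(\Omega_T)$. By the $G$-martingale decomposition it can be written uniquely as $M_t = M_0 + \int_0^t Z_s\,dB_s + K_t$ for some $Z \in H_G^\alpha(0,T;\mathbb{R}^d)$ and some continuous decreasing $G$-martingale $K$ with $K_0=0$ and $K_T \in L_G^\alpha(\Omega_T)$. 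Setting $Y_t := M_t - \int_0^t f(s)\,ds - \int_0^t g_{ij}(s)\,d\langle B^i,B^j\rangle_s$ then produces the desired triple.

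The second step handles the general Lipschitz drivers. Define $\Gamma : \mathfrak{S}_G^\alpha(0,T)\to\mathfrak{S}_G^\alpha(0,T)$ by sending $(y,z,k)$ to the solution $(Y,Z,K)$ of the $G$-BSDE whose coefficients are the frozen processes $f(s,y_s,z_s)$ and $g_{ij}(s,y_s,z_s)$; this is well posed by the first step and (H2). Under the weighted norm
$$\|(Y,Z,K)\|_{\alpha,\beta}^\alpha := \mathbb{\hat{E}}\Bigl[\sup_{t\in[0,T]} e^{\beta t}|Y_t|^\alpha + \Bigl(\int_0^T e^{2\beta s/\alpha}|Z_s|^2\,ds\Bigr)^{\alpha/2} + e^{\beta T}|K_T|^\alpha\Bigr],$$
one shows $\Gamma$ is a strict contraction for $\beta$ chosen large enough in terms of the Lipschitz constant $L$ in (H2). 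The main estimate is produced by applying the It\^o formula for $G$-BSDEs to $e^{\beta t}|\delta Y_t|^\alpha$ with $\delta Y = Y - Y'$, using (H2) to absorb the driver differences into the Gronwall-type argument and Proposition 2.10 (the BDG inequality) to control $\int \delta Z\,dB$.

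The main obstacle will be the a priori moment control of the decreasing $G$-martingale part $K_T$. In the classical framework this follows immediately from the martingale identity, but here $K$ is controlled only through the nonlinear $\mathbb{\hat{E}}$ and one has to recover its $L_G^\alpha$-norm from the relation
$$K_T = -\Bigl(\xi - Y_0 - \int_0^T f(s,Y_s,Z_s)\,ds - \int_0^T g_{ij}(s,Y_s,Z_s)\,d\langle B^i,B^j\rangle_s + \int_0^T Z_s\,dB_s\Bigr),$$
together with sublinearity and BDG; the subtlety is that this loses a little integrability, which is exactly why the statement is only for $1<\alpha<\beta$ even though the data lie in $L_G^\beta$. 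Once these a priori estimates are established in a form uniform in the iteration, existence, uniqueness and the stated regularity $Y \in S_G^\alpha$, $Z \in H_G^\alpha$, $K_T \in L_G^\alpha$ all fall out together.
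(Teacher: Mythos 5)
Your Step 2 is where the argument breaks down, and the obstruction is exactly the one that forced the authors of \cite{HJPS} to abandon Picard iteration. The only tool that produces the pair $(Z,K)$ from given data is the $G$-martingale decomposition (Song's theorem), and that theorem loses integrability strictly: from data in $L_G^{\beta}$ one gets $Z\in H_G^{\alpha}$ and $K_T\in L_G^{\alpha}$ only for $\alpha<\beta$, and no version with $\alpha=\beta$ is available. Consequently your map $\Gamma$ does not send $\mathfrak{S}_G^{\alpha}(0,T)$ into itself: if $(y,z)\in S_G^{\alpha}\times H_G^{\alpha}$, then the frozen generators $f(s,y_s,z_s)$, $g_{ij}(s,y_s,z_s)$ are only in $M_G^{\alpha}$, so the output triple lies in $\mathfrak{S}_G^{\alpha'}$ only for $\alpha'<\alpha$; the exponent degrades at every iteration, and no choice of weight in your norm can turn $\Gamma$ into a contraction on a fixed space. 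This is not the benign one-time loss you attribute to the estimate of $K_T$ at the end; it is structural. A second, independent problem is the contraction estimate itself: applying It\^{o}'s formula to $e^{\beta t}|\delta Y_t|^{\alpha}$ produces a term $\int e^{\beta s}|\delta Y_s|^{\alpha-1}\mathrm{sgn}(\delta Y_s)\,d(\delta K_s)$ in which $\delta K=K-K'$ is a difference of decreasing $G$-martingales with no sign and no smallness; the difference estimates available in this framework (the proposition quoted right after the theorem, on $\hat Y$) control only $\delta Y$ in terms of the data difference, while any bound on $\delta Z$ or $\delta K$ involves the full, non-small norms of both solutions, so smallness does not propagate through the iteration. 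Your Step 1 (generators independent of $(y,z)$, solved via the decomposition of $\mathbb{\hat{E}}_t[\xi+\int_0^T f\,ds+\int_0^T g_{ij}\,d\langle B^i,B^j\rangle_s]$) is fine and does explain the restriction $\alpha<\beta$, but it cannot be bootstrapped to the Lipschitz case by a fixed point argument.

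For comparison, the proof cited from \cite{HJPS} proceeds differently: one first constructs the solution explicitly when $\xi$ and the generators are Lipschitz cylinder functionals of the path, by solving, backwards on each subinterval of the time grid, the associated fully nonlinear parabolic PDE (the nondegeneracy of $G$ together with Krylov's interior regularity theory gives enough smoothness), and then applying It\^{o}'s formula to $u(t,B_t)$; this yields $(Y,Z,K)$ with the required integrability directly, without invoking the martingale decomposition. General $\xi\in L_G^{\beta}$ and general $f,g_{ij}$ are then handled by approximation, using the a priori estimates recorded here as the two propositions following the theorem to show that the approximating triples are Cauchy in $S_G^{\alpha}\times H_G^{\alpha}\times L_G^{\alpha}$ for every $\alpha<\beta$; uniqueness also follows from those estimates. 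If you want to salvage your plan, you would have to replace the contraction step by such an approximation argument with uniform a priori bounds.
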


We have the following estimates.

\begin{proposition}
\label{pro3.4} (\cite{HJPS}) Let $\xi\in L_{G}^{\beta}(\Omega_{T})$ and $f$,
$g_{ij}$ satisfy (H1) and (H2) for some $\beta>1$. Assume that $(Y,Z,K)\in
\mathfrak{S}_{G}^{\alpha}(0,T)$ for some $1<\alpha<\beta$ is a solution of
equation (\ref{e3}). Then

\begin{description}
\item[(i)] There exists a constant $C_{\alpha}:=C(\alpha,T,G,L)>0$ such that%
\[
|Y_{t}|^{\alpha}\leq C_{\alpha}\mathbb{\hat{E}}_{t}[|\xi|^{\alpha}+\int%
_{t}^{T}|h_{s}^{0}|^{\alpha}ds],
\]%
\[
\mathbb{\hat{E}}[(\int_{0}^{T}|Z_{s}|^{2}ds)^{\frac{\alpha}{2}}]\leq
C_{\alpha}\{ \mathbb{\hat{E}}[\sup_{t\in\lbrack0,T]}|Y_{t}|^{\alpha
}]+(\mathbb{\hat{E}}[\sup_{t\in\lbrack0,T]}|Y_{t}|^{\alpha}])^{\frac{1}{2}%
}(\mathbb{\hat{E}}[(\int_{0}^{T}h_{s}^{0}ds)^{\alpha}])^{\frac{1}{2}}\},
\]%
\[
\mathbb{\hat{E}}[|K_{T}|^{\alpha}]\leq C_{\alpha}\{ \mathbb{\hat{E}}%
[\sup_{t\in\lbrack0,T]}|Y_{t}|^{\alpha}]+\mathbb{\hat{E}}[(\int_{0}^{T}%
h_{s}^{0}ds)^{\alpha}]\},
\]
where $h_{s}^{0}=|f(s,0,0)|+\sum_{i,j=1}^{d}|g_{ij}(s,0,0)|$.

\item[(ii)] For any given $\alpha^{\prime}$ with $\alpha<\alpha^{\prime}%
<\beta$, there exists a constant $C_{\alpha,\alpha^{\prime}}$ depending on
$\alpha$, $\alpha^{\prime}$, $T$, $G$, $L$ such that%
\begin{align*}
\mathbb{\hat{E}}[\sup_{t\in\lbrack0,T]}|Y_{t}|^{\alpha}]  &  \leq
C_{\alpha,\alpha^{\prime}}\{ \mathbb{\hat{E}}[\sup_{t\in\lbrack0,T]}%
\mathbb{\hat{E}}_{t}[|\xi|^{\alpha}]]\\
&  +(\mathbb{\hat{E}}[\sup_{t\in\lbrack0,T]}\mathbb{\hat{E}}_{t}[(\int_{0}%
^{T}h_{s}^{0}ds)^{\alpha^{\prime}}]])^{\frac{\alpha}{\alpha^{\prime}}%
}+\mathbb{\hat{E}}[\sup_{t\in\lbrack0,T]}\mathbb{\hat{E}}_{t}[(\int_{0}%
^{T}h_{s}^{0}ds)^{\alpha^{\prime}}]]\}.
\end{align*}

\end{description}
\end{proposition}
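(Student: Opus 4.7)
The plan is to prove (i) by applying It\^o's formula in the $G$-framework to the map $y \mapsto |y|^\alpha$ (with the smoothed surrogate $(|y|^2 + \varepsilon)^{\alpha/2}$ when $1 < \alpha < 2$, and sending $\varepsilon \downarrow 0$), and then to extract the three estimates in sequence. From the BSDE (\ref{e3}), It\^o's formula yields on $[t, T]$ an identity of the form
\[
|Y_t|^\alpha + \frac{\alpha(\alpha-1)}{2} \int_t^T |Y_s|^{\alpha-2} |Z_s|^2 \, d\langle B\rangle_s = |\xi|^\alpha + \alpha \int_t^T |Y_s|^{\alpha-2} Y_s \bigl[ f_s \, ds + g_{ij,s} \, d\langle B^i, B^j\rangle_s \bigr] - \alpha \int_t^T |Y_s|^{\alpha-2} Y_s Z_s \, dB_s - \alpha \int_t^T |Y_s|^{\alpha-2} Y_s \, dK_s.
\]

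By (H2), $|f(s, Y, Z)| \leq h^0_s + L(|Y|+|Z|)$, and similarly for $g_{ij}$; Young's inequality then absorbs the $|Y|^{\alpha-1}|Z|$ cross term into the $|Y|^{\alpha-2}|Z|^2 \, d\langle B\rangle$ piece on the left, which in turn is bounded below by $\underline{\sigma}^2 |Y|^{\alpha-2}|Z|^2 \, ds$ thanks to the non-degeneracy of $G$. The delicate term is $-\alpha \int |Y|^{\alpha-2} Y \, dK$: after inserting an $e^{\lambda s}$ weight with $\lambda$ large enough to dominate the pure $|Y|^\alpha$ contributions and taking $\mathbb{\hat{E}}_t[\cdot]$, I would use (a) that the $dB$-stochastic integral vanishes in conditional expectation, and (b) that $K$ is itself a $G$-martingale, so $\mathbb{\hat{E}}_t[K_T - K_t] = 0$; after a further Young step pitting $|Y|^{\alpha-1}$ against the total variation of $K$, a backward Gronwall step in $t$ gives the first estimate in (i).

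Given the pointwise bound on $|Y_t|^\alpha$, the $Z$-estimate follows by returning to the It\^o identity with $\alpha = 2$, taking $\mathbb{\hat{E}}[\cdot]$, and using Cauchy--Schwarz on $\mathbb{\hat{E}}[\int_0^T Y_s \, dK_s]$ together with the sup-in-$t$ control of $Y$ already in hand. For $K_T$, one isolates it directly from the BSDE via $K_T = Y_0 - \xi - \int_0^T f \, ds - \int_0^T g_{ij} \, d\langle B^i, B^j\rangle_s + \int_0^T Z_s \, dB_s$, takes $\mathbb{\hat{E}}[|\cdot|^\alpha]$, and controls the stochastic integral by the $G$-BDG inequality of Proposition~\ref{proA.5}, plugging in the $Z$-estimate just established.

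For part (ii), starting from the pointwise estimate in (i) one takes $\sup_{t \in [0, T]}$ and then $\mathbb{\hat{E}}[\cdot]$. The extra room $\alpha' > \alpha$ reflects the absence of a Doob $L^1$ maximal inequality for the $G$-martingale $t \mapsto \mathbb{\hat{E}}_t[X]$: a Doob-type bound is only available in $L^p$ with $p > 1$, and an interpolation with exponent $\alpha/\alpha'$ is used to control $\mathbb{\hat{E}}[\sup_t \mathbb{\hat{E}}_t[(\int_0^T h^0_s \, ds)^\alpha]]$ by $\mathbb{\hat{E}}[\sup_t \mathbb{\hat{E}}_t[(\int_0^T h^0_s \, ds)^{\alpha'}]]^{\alpha/\alpha'}$, producing the two terms in the stated bound. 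The main obstacle throughout is the presence of the decreasing $G$-martingale $K$: unlike in classical BSDEs one cannot freely take expectations of integrals against $K$, and must simultaneously exploit $-dK \geq 0$ and $\mathbb{\hat{E}}_t[K_T - K_t] = 0$. A secondary subtlety is the justification of It\^o's formula for $|y|^\alpha$ with $\alpha \in (1, 2)$ in the $G$-setting, handled by smoothing and the regularity of $(Y, Z, K) \in \mathfrak{S}_G^\alpha(0, T)$.
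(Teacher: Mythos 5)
Your overall plan (smoothed It\^o formula for $|y|^{\alpha}$, Young absorption of the $Z$ cross term via non-degeneracy, BDG for $K_T$) is the right skeleton, and it is essentially the route of \cite{HJPS} (this paper only cites the result, it contains no proof). But the step you yourself flag as delicate --- the term $-\alpha\int_t^T|Y_s|^{\alpha-2}Y_s\,dK_s$ --- is exactly where your argument breaks. The fact that $\mathbb{\hat{E}}_t[K_T-K_t]=0$ does not transfer to the weighted integral: under a sublinear expectation $\mathbb{\hat{E}}_t[A-\int_t^T\eta_s\,dK_s]\leq\mathbb{\hat{E}}_t[A]+\mathbb{\hat{E}}_t[\int_t^T\eta_s\,d(-K_s)]$, and the last term is strictly positive in general even though $K$ is a $G$-martingale (e.g.\ $K_t=\underline{\sigma}^{2}t-\langle B\rangle_t$ gives $\mathbb{\hat{E}}_t[-(K_T-K_t)]=(\bar{\sigma}^{2}-\underline{\sigma}^{2})(T-t)>0$). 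Your fallback, a Young step ``pitting $|Y|^{\alpha-1}$ against the total variation of $K$'', puts $|K_T-K_t|$ (or $\int|Y|^{\alpha}d(-K)$) on the right-hand side, which is not in the target estimate and is only controlled \emph{after} the $Y$- and $Z$-bounds --- the argument is circular. The missing idea is the lemma used elsewhere in this paper (Lemma 3.4 of \cite{HJPS}): for a nonnegative integrand $\eta$, the process $\int_0^{\cdot}\eta_s\,dK_s$ is itself a $G$-martingale. One bounds $Y_s\tilde{Y}_s^{\alpha-2}\,dK_s\geq\tilde{Y}_s^{\alpha-1}\,dK_s$ (with $\tilde{Y}_s=(|Y_s|^{2}+\varepsilon)^{1/2}$), so the $dK$-term is dominated by minus the increment of such a $G$-martingale, keeps it \emph{together} with the symmetric stochastic integral as a single $G$-martingale increment on the small side of the pointwise inequality, and then uses only monotonicity and translation invariance of $\mathbb{\hat{E}}_t$ (never sub-additivity) so that this increment disappears exactly. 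Without this device the first estimate of (i) does not come out.

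Two secondary points. First, your order ``$Z$ first, then $K$'' is circular as written: in the It\^o-$|Y|^{2}$ identity the term $\int_0^T Y_s\,dK_s$ is controlled by $\sup_t|Y_t|\,|K_T|$, and $|K_T|$ is what you propose to bound afterwards using $Z$; in \cite{HJPS} the two estimates are obtained simultaneously, substituting one into the other and absorbing the $(\int_0^T|Z_s|^{2}ds)^{\alpha/2}$ term by a small-constant Young inequality --- this is also why the stated $Z$-bound has the product form $(\mathbb{\hat{E}}[\sup|Y|^{\alpha}])^{1/2}(\mathbb{\hat{E}}[(\int h^0_sds)^{\alpha}])^{1/2}$. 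Second, (ii) cannot be obtained by taking $\sup_t$ in the estimate of (i): the data functional changes from $\int_0^T|h_s^{0}|^{\alpha}ds$ to powers of $\int_0^T h_s^{0}ds$, and $\int_0^T|h^{0}_s|^{\alpha}ds$ is not dominated by $C(1+(\int_0^T h^{0}_sds)^{\alpha'})$ (take $h^{0}=M$ on $[0,1/M]$). One has to redo the It\^o/Young step bounding $\int\tilde{Y}^{\alpha-1}h^{0}ds\leq\epsilon\sup_s\tilde{Y}_s^{\alpha}+C_{\epsilon}(\int h^{0}ds)^{\alpha}$, which leaves a $\mathbb{\hat{E}}_t[\sup_s|Y_s|^{\alpha}]$ term inside; it is in absorbing this term that $\alpha'>\alpha$ and the Doob-type maximal estimate for conditional $G$-expectation (Theorem 2.13 in \cite{HJPS}) are needed, producing both the $\alpha/\alpha'$-power term and the linear term --- not the interpolation of the $h^{0}$ term you describe.
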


\begin{proposition}
\label{pro3.5} (\cite{HJPS}) Let $\xi^{l}\in L_{G}^{\beta}(\Omega_{T})$ ,
$l=1,2$, and $f^{l}$, $g_{ij}^{l}$ satisfy (H1) and (H2') for some $\beta>1$.
Assume that $(Y^{l},Z^{l},K^{l})\in\mathfrak{S}_{G}^{\alpha}(0,T)$ for some
$1<\alpha<\beta$ are the solutions of equation (\ref{e3}) corresponding to
$\xi^{l}$ $f^{l}$ and $g_{ij}^{l}$. Set $\hat{Y}_{t}=Y_{t}^{1}-Y_{t}^{2}%
,\hat{Z}_{t}=Z_{t}^{1}-Z_{t}^{2}$ and $\hat{K}_{t}=K_{t}^{1}-K_{t}^{2}$. Then

\begin{description}
\item[(i)] There exists a constant $C_{\alpha}:=C(\alpha,T,G,L)>0$ such that%
\[
|\hat{Y}_{t}|^{\alpha}\leq C_{\alpha}\mathbb{\hat{E}}_{t}[|\hat{\xi}|^{\alpha
}+\int_{t}^{T}|\hat{h}_{s}|^{\alpha}ds],
\]
where $\hat{\xi}=\xi^{1}-\xi^{2}$, $\hat{h}_{s}=|f^{1}(s,Y_{s}^{2},Z_{s}%
^{2})-f^{2}(s,Y_{s}^{2},Z_{s}^{2})|+\sum_{i,j=1}^{d}|g_{ij}^{1}(s,Y_{s}%
^{2},Z_{s}^{2})-g_{ij}^{2}(s,Y_{s}^{2},Z_{s}^{2})|$.

\item[(ii)] For any given $\alpha^{\prime}$ with $\alpha<\alpha^{\prime}%
<\beta$, there exists a constant $C_{\alpha,\alpha^{\prime}}$ depending on
$\alpha$, $\alpha^{\prime}$, $T$, $G$, $L$ such that%
\begin{align*}
\mathbb{\hat{E}}[\sup_{t\in\lbrack0,T]}|\hat{Y}_{t}|^{\alpha}]  &  \leq
C_{\alpha,\alpha^{\prime}}\{ \mathbb{\hat{E}}[\sup_{t\in\lbrack0,T]}%
\mathbb{\hat{E}}_{t}[|\hat{\xi}|^{\alpha}]]\\
&  +(\mathbb{\hat{E}}[\sup_{t\in\lbrack0,T]}\mathbb{\hat{E}}_{t}[(\int_{0}%
^{T}\hat{h}_{s}ds)^{\alpha^{\prime}}]])^{\frac{\alpha}{\alpha^{\prime}}%
}+\mathbb{\hat{E}}[\sup_{t\in\lbrack0,T]}\mathbb{\hat{E}}_{t}[(\int_{0}%
^{T}\hat{h}_{s}ds)^{\alpha^{\prime}}]]\}.
\end{align*}

\end{description}
\end{proposition}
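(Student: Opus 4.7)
The plan is to mimic the proof of Proposition \ref{pro3.4} applied to the difference triple $(\hat Y,\hat Z,\hat K):=(Y^1-Y^2,Z^1-Z^2,K^1-K^2)$, which by subtraction satisfies
\[
\hat Y_t=\hat\xi+\int_t^T\Delta f(s)\,ds+\int_t^T\Delta g_{ij}(s)\,d\langle B^i,B^j\rangle_s-\int_t^T\hat Z_s\,dB_s-(\hat K_T-\hat K_t),
\]
with $\Delta f(s):=f^1(s,Y^1_s,Z^1_s)-f^2(s,Y^2_s,Z^2_s)$ and $\Delta g_{ij}$ analogous. Splitting $\Delta f=[f^1(s,Y^1_s,Z^1_s)-f^1(s,Y^2_s,Z^2_s)]+[f^1(s,Y^2_s,Z^2_s)-f^2(s,Y^2_s,Z^2_s)]$, assumption (H2) gives $|\Delta f(s)|+\sum_{i,j}|\Delta g_{ij}(s)|\leq L(|\hat Y_s|+|\hat Z_s|)+\hat h_s$. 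The chief obstacle is that $\hat K$ is the difference of two decreasing $G$-martingales --- neither a $G$-martingale nor monotone --- so Proposition \ref{pro3.4} cannot be invoked as a black box; its proof must be re-executed with the $d\hat K$ increment handled by total-variation bounds.

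\textbf{Proof of (i).} Apply the $G$-It\^o formula to the smooth approximation $(|\hat Y_s|^2+\varepsilon)^{\alpha/2}$ on $[t,T]$. Modulo the stochastic integral $\int\hat Z_s\,dB_s$ and the $d\hat K_s$ increment, the drift decomposes into a non-negative second-order piece bounded below by a multiple of $(|\hat Y_s|^2+\varepsilon)^{\alpha/2-1}|\hat Z_s|^2$ (using non-degeneracy $\underline{\sigma}^2>0$), and cross-terms dominated by $C\alpha(|\hat Y_s|^2+\varepsilon)^{(\alpha-1)/2}(L|\hat Y_s|+L|\hat Z_s|+\hat h_s)$. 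Young's inequality absorbs the $|\hat Z|$-cross-term into the second-order piece and bounds $|\hat Y|^{\alpha-1}\hat h_s$ by $C(|\hat Y_s|^\alpha+\hat h_s^\alpha)$. Taking $\mathbb{\hat E}_t[\cdot]$ eliminates the stochastic integral via Proposition \ref{proA.5}; the $d\hat K$ contribution is controlled using $|d\hat K_s|\leq -dK^1_s-dK^2_s$ together with the a priori $L_G^\alpha$-bound on $K^l_T$ supplied by Proposition \ref{pro3.4}(i). Letting $\varepsilon\downarrow 0$ yields a backward integral inequality of the form $|\hat Y_t|^\alpha\leq C\,\mathbb{\hat E}_t[|\hat\xi|^\alpha+\int_t^T|\hat h_s|^\alpha ds]+C\int_t^T\mathbb{\hat E}_t[|\hat Y_s|^\alpha]ds$, which is closed by the Gronwall inequality developed in Section 3 to give (i).

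\textbf{Proof of (ii).} Take $\sup_{t\in[0,T]}$ inside $\mathbb{\hat E}[\cdot]$ on both sides of the pointwise bound from (i). The $\mathbb{\hat E}[\sup_t\mathbb{\hat E}_t[|\hat\xi|^\alpha]]$ term appears directly. For the $\hat h$ term, H\"older's inequality with conjugate exponents $\alpha'/\alpha$ and its dual converts $\mathbb{\hat E}_t[\int_0^T|\hat h_s|^\alpha ds]$ into a bounded multiple of $\mathbb{\hat E}_t[(\int_0^T\hat h_s\,ds)^{\alpha'}]^{\alpha/\alpha'}$, which yields the stated form of the estimate and exactly parallels the bootstrapping step of Proposition \ref{pro3.4}(ii); the strict gap $\alpha<\alpha'<\beta$ is essential here. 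The remaining technical work --- which is the most delicate part of the whole argument --- is to track constants so that the $|\hat Z|,|\hat Y|$-terms produced by applying the $G$-BDG inequality (Proposition \ref{proA.5}, Remark \ref{nnrem1}) to the stochastic integral can be absorbed back into the left-hand side.
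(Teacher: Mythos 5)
The paper does not prove this proposition at all---it is quoted verbatim from \cite{HJPS}---so your attempt has to be judged against the argument used there, and it has two genuine gaps. The decisive one is your treatment of $d\hat{K}$: bounding it by the total variation, $|d\hat{K}_s|\leq -dK_s^1-dK_s^2$, together with the $L_G^{\alpha}$-bounds on $K_T^l$, produces terms of the size of $K^1$ and $K^2$ themselves, which are of the order of the data of the \emph{individual} equations and do not vanish when $\hat{\xi}=0$ and $\hat{h}\equiv 0$. Such an argument can therefore never yield an estimate whose right-hand side contains only $\hat{\xi}$ and $\hat{h}$ (in particular it would not even give uniqueness, which is exactly what this estimate is for). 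The point that cannot be avoided is that $K^1,K^2$ must be used \emph{as decreasing $G$-martingales}, via the fact (Lemma 3.4 of \cite{HJPS}, the same lemma this paper invokes in the proof of Theorem \ref{the5.2}) that $\int_0^{\cdot}X_s\,dK_s^l$ is again a $G$-martingale whenever $X\geq 0$; one then arranges the computation so that each $K^l$-increment appears either with a favorable sign or inside a conditional expectation where it vanishes --- e.g.\ by splitting $\hat{Y}$ into $\hat{Y}^{\pm}$, or, as in the comparison-theorem argument of Section 3, by absorbing $K^2$ into the unknown (working with $\hat{Y}+K^2$) so that only $K^1$ remains as the decreasing $G$-martingale of the resulting BSDE and the $K^2$-contribution reassembles, via Proposition \ref{mfv}, to exactly $K_t^2$ and cancels. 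Your route never uses the $G$-martingale property of $K^1,K^2$, and without it the stated inequality in (i) is out of reach.

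The second gap is in (ii). H\"older's inequality goes the wrong way: one has $(\int_0^T\hat{h}_s\,ds)^{\alpha}\leq T^{\alpha-1}\int_0^T|\hat{h}_s|^{\alpha}ds$, but there is no bound of $\int_0^T|\hat{h}_s|^{\alpha}ds$ by any power of $\int_0^T\hat{h}_s\,ds$ (take $\hat{h}$ large on a very short time interval), so you cannot pass from the right-hand side of (i) to the quantity $(\int_0^T\hat{h}_s\,ds)^{\alpha'}$ appearing in (ii) by ``taking $\sup_t$ and H\"older''. The correct route is to first derive the pointwise bound with $(\int_t^T\hat{h}_s\,ds)^{\alpha}$ inside the conditional expectation (this is what the linearized/representation argument naturally produces) and then apply the maximal-type estimate for conditional $G$-expectations (Theorem 2.13 in \cite{HJPS}, the substitute for Doob's inequality in this framework); that estimate, not H\"older, is where the strict gap $\alpha<\alpha'<\beta$ is consumed. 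The remaining ingredients you list (smoothing $|y|^{\alpha}$, non-degeneracy to absorb the $\hat{Z}$ cross-terms, a Gronwall closure) are fine, though quoting the Section 3 Gronwall inequality --- itself proved via the comparison theorem --- is unnecessary; a deterministic Gronwall argument or an $e^{rs}$ weight suffices.
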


\section{Comparison theorem of $G$-BSDEs}

For simplicity, we consider $1$-dimensional $G$-Brownian motion case. The
results still hold for the case $d>1$.

\subsection{Explicit solutions of linear $G$-BSDEs}

Let $(\Omega_{T},L_{G}^{1}(\Omega_{T}),\mathbb{\hat{E}})$ with $\Omega
_{T}=C_{0}([0,T],\mathbb{R})$ be a $G$-expectation space. We consider the
explicit solution of the following linear $G$-BSDE:
\begin{equation}
Y_{t}=\xi+\int_{t}^{T}f_{s}ds+\int_{t}^{T}g_{s}d\langle B\rangle_{s}-\int%
_{t}^{T}Z_{s}dB_{s}-(K_{T}-K_{t}), \label{LBSDE1}%
\end{equation}
where $f_{s}=a_{s}Y_{s}+b_{s}Z_{s}+m_{s}$, $g_{s}=c_{s}Y_{s}+d_{s}Z_{s}+n_{s}$
with $\{a_{s}\}_{s\in\lbrack0,T]}$, $\{b_{s}\}_{s\in\lbrack0,T]}$,
$\{c_{s}\}_{0\leq s\in\lbrack0,T]}$, $\{d_{s}\}_{s\in\lbrack0,T]}$ bounded
processes in $M_{G}^{\beta}(0,T)$ and $\xi\in L_{G}^{\beta}(\Omega_{T})$,
$\{m_{s}\}_{s\in\lbrack0,T]}$, $\{n_{s}\}_{s\in\lbrack0,T]}\in M_{G}^{\beta
}(0,T)$ with $\beta>1$. For this purpose we construct an auxiliary extended
$\tilde{G}$-expectation space $(\tilde{\Omega}_{T},L_{\tilde{G}}^{1}%
(\tilde{\Omega}_{T}),\mathbb{\hat{E}}^{\tilde{G}})$ with $\tilde{\Omega}%
_{T}=C_{0}([0,T],\mathbb{R}^{2})$ and%

\[
\tilde{G}(A)=\frac{1}{2}\sup_{\underline{\sigma}^{2}\leq v\leq\bar{\sigma}%
^{2}}\mathrm{tr}\left[  A\left[
\begin{array}
[c]{cc}%
v & 1\\
1 & v^{-1}%
\end{array}
\right]  \right]  ,\ A\in\mathbb{S}_{2}.
\]
Let $\{(B_{t},\tilde{B}_{t})\}$ be the canonical process in the extended space.

\begin{remark}
\label{rem5.1} It is easy to check that $\langle B,\tilde{B}\rangle_{t}=t$. In
particular, if $\underline{\sigma}^{2}=\bar{\sigma}^{2}$, we can further get
$\tilde{B}_{t}=\bar{\sigma}^{-2}B_{t}$.
\end{remark}

Let $\{X_{t}\}_{t\in\lbrack0,T]}$ be the solution of the following $\tilde{G}%
$-SDE:%
\begin{equation}
X_{t}=1+\int_{0}^{t}a_{s}X_{s}ds+\int_{0}^{t}c_{s}X_{s}d\langle B\rangle
_{s}+\int_{0}^{t}d_{s}X_{s}dB_{s}+\int_{0}^{t}b_{s}X_{s}d\tilde{B}_{s}.
\label{LSDE2}%
\end{equation}
It is easy to verfy that%
\begin{equation}
X_{t}=\exp(\int_{0}^{t}(a_{s}-b_{s}d_{s})ds+\int_{0}^{t}c_{s}d\langle
B\rangle_{s})\mathcal{E}_{t}^{B}\mathcal{E}_{t}^{\tilde{B}}, \label{LSDE3}%
\end{equation}
where $\mathcal{E}_{t}^{B}=\exp(\int_{0}^{t}d_{s}dB_{s}-\frac{1}{2}\int%
_{0}^{t}d_{s}^{2}d\langle B\rangle_{s})$, $\mathcal{E}_{t}^{\tilde{B}}%
=\exp(\int_{0}^{t}b_{s}d\tilde{B}_{s}-\frac{1}{2}\int_{0}^{t}b_{s}^{2}%
d\langle\tilde{B}\rangle_{s})$.

\begin{theorem}
\label{the5.2} In the extended $\tilde{G}$-expectation space, the solution of
the $G$-BSDE (\ref{LBSDE1}) can be represented as%
\begin{equation}
Y_{t}=(X_{t})^{-1}\mathbb{\hat{E}}_{t}^{\tilde{G}}[X_{T}\xi+\int_{t}^{T}%
m_{s}X_{s}ds+\int_{t}^{T}n_{s}X_{s}d\langle B\rangle_{s}], \label{LBSDE4}%
\end{equation}
where $\{X_{t}\}_{t\in\lbrack0,T]}$ is the solution of the $\tilde{G}$-SDE
(\ref{LSDE2}).
\end{theorem}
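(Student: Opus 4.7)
The plan is to derive the formula by applying It\^o's product rule to $X_tY_t$ in the extended $\tilde{G}$-expectation space, observing that this turns (\ref{LBSDE1}) into a $\tilde{G}$-BSDE whose driver no longer depends on $(Y,Z)$, and then reading off $X_tY_t$ as a conditional $\tilde{G}$-expectation. The choice of $\tilde{G}$ is dictated precisely by the requirement $d\langle B,\tilde B\rangle_s=ds$ (Remark \ref{rem5.1}), so that the cross-variation between the $d\tilde{B}_s$ piece of $X$ and the $dB_s$ piece of $Y$ produces a $b_sX_sZ_s\,ds$ term that cancels the $X_sb_sZ_s\,ds$ term appearing in $X_sf_s\,ds$; similarly, the cross-variation of $d_sX_s\,dB_s$ in $X$ with $Z_s\,dB_s$ in $Y$ cancels the $d_sZ_s$ contribution to $g_s$.

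Concretely, I would expand $dY_s$ and $dX_s$ and combine via the $\tilde{G}$-It\^o product formula, using $d\langle B,\tilde B\rangle_s = ds$, to obtain
$$d(X_sY_s) = -X_sm_s\,ds - X_sn_s\,d\langle B\rangle_s + (d_sX_sY_s + X_sZ_s)\,dB_s + b_sX_sY_s\,d\tilde{B}_s + X_s\,dK_s,$$
after checking that the coefficients of $Y_s$ and $Z_s$ in both the $ds$ and $d\langle B\rangle_s$ parts cancel identically. Integrating from $t$ to $T$ and using $Y_T=\xi$ yields
$$X_tY_t = X_T\xi + \int_t^T X_sm_s\,ds + \int_t^T X_sn_s\,d\langle B\rangle_s - \int_t^T \hat Z_s\,dB_s - \int_t^T \hat Z'_s\,d\tilde{B}_s - (\hat K_T-\hat K_t),$$
with $\hat Z_s := d_sX_sY_s + X_sZ_s$, $\hat Z'_s := b_sX_sY_s$, and $\hat K_t := \int_0^t X_s\,dK_s$. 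This exhibits $X_tY_t$ as the $Y$-component of a $\tilde{G}$-BSDE whose driver is identically zero in $(Y,Z)$.

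To finish, I would apply the standard characterization in the $\tilde{G}$-framework: the $Y$-component of a BSDE with driver independent of $(Y,Z)$ equals the conditional $\tilde{G}$-expectation of the terminal datum augmented by the $ds$ and $d\langle B\rangle_s$ integrands, which is precisely (\ref{LBSDE4}). The main obstacle is in verifying that $\hat K_t=\int_0^t X_s\,dK_s$ is a decreasing $\tilde{G}$-martingale. The process $X$ is positive and has all moments, as is clear from (\ref{LSDE3}) together with the boundedness of $a,b,c,d$; since $\hat{\mathbb{E}}^{\tilde{G}}$ restricted to random variables depending only on $B$ coincides with $\hat{\mathbb{E}}^G$, it suffices to show in the original $G$-space that $\int \eta\,dK$ is a decreasing $G$-martingale for non-negative adapted $\eta$. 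This follows by approximating $X_\cdot$ by bounded non-negative simple processes and invoking positive homogeneity of the conditional $G$-expectation. All integrability conditions in $L_{\tilde{G}}^{\alpha}$ for some $1<\alpha<\beta$ follow from H\"older's inequality combined with Proposition \ref{pro3.4}.
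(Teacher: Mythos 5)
Your route is the same as the paper's: apply It\^o's product rule to $X_tY_t$ in the extended space (your differential and the resulting identity are correct), note that the $dB$ and $d\tilde B$ integrals are symmetric $\tilde G$-martingales, and reduce everything to the single key fact that $\hat K_t=\int_0^t X_s\,dK_s$ is a decreasing $\tilde G$-martingale; the paper disposes of exactly this point by citing Lemma 3.4 of \cite{HJPS} and then concludes by taking conditional $\tilde G$-expectation, as you do.

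The one step that fails as written is your reduction of that martingale property to the original $G$-space. You argue that, since $\mathbb{\hat E}^{\tilde G}$ agrees with $\mathbb{\hat E}$ on random variables depending only on $B$, it suffices to show in the original space that $\int\eta\,dK$ is a decreasing $G$-martingale for nonnegative adapted $\eta$. But $X$ is not $B$-measurable: by (\ref{LSDE3}) it carries the factor $\mathcal{E}_t^{\tilde B}=\exp(\int_0^t b_s\,d\tilde B_s-\frac12\int_0^t b_s^2\,d\langle\tilde B\rangle_s)$, so $\int_0^t X_s\,dK_s$ genuinely lives in the extended space precisely when $b\not\equiv 0$ --- which is the only case in which the extension is needed at all (cf.\ Remark \ref{rem5.3}). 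The correct use of the coincidence of the two expectations on $B$-measurable variables is different: it shows that $K$, a decreasing $G$-martingale, remains a decreasing $\tilde G$-martingale in the extended space. After that, your approximation-by-nonnegative-bounded-simple-processes argument together with positive homogeneity of the conditional expectation must be carried out directly under $\mathbb{\hat E}_t^{\tilde G}$, with the moment bounds on $X$ and $K_T\in L_G^{\alpha}(\Omega_T)$ used to pass to the limit; this is exactly the content of the lemma the paper cites. With that repair (and it is only a repair --- the idea and the estimates you invoke are the right ones), your proof is complete and coincides with the paper's.
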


\begin{proof}
By applying It\^{o}'s formula to $X_{t}Y_{t}$, we get%
\begin{align*}
&  X_{t}Y_{t}+\int_{t}^{T}(X_{s}Z_{s}+d_{s}X_{s}Y_{s})dB_{s}+\int_{t}^{T}%
b_{s}X_{s}Y_{s}d\tilde{B}_{s}+\int_{t}^{T}X_{s}dK_{s}\\
&  =X_{T}\xi+\int_{t}^{T}m_{s}X_{s}ds+\int_{t}^{T}n_{s}X_{s}d\langle
B\rangle_{s}.
\end{align*}
By Lemma 3.4 in \cite{HJPS}, we have $\{ \int_{0}^{t}X_{s}dK_{s}%
\}_{t\in\lbrack0,T]}$ is a $\tilde{G}$-martingale. Thus we get
\[
Y_{t}=(X_{t})^{-1}\mathbb{\hat{E}}_{t}^{\tilde{G}}[X_{T}\xi+\int_{t}^{T}%
m_{s}X_{s}ds+\int_{t}^{T}n_{s}X_{s}d\langle B\rangle_{s}].
\]

\end{proof}

\begin{remark}
\label{rem5.3} If $b_{t}=0$, the solution of the $G$-BSDE (\ref{LBSDE1}) is
\[
Y_{t}=(X_{t})^{-1}\mathbb{\hat{E}}_{t}[X_{T}\xi+\int_{t}^{T}m_{s}X_{s}%
ds+\int_{t}^{T}n_{s}X_{s}d\langle B\rangle_{s}],
\]
where $X_{t}=\exp(\int_{0}^{t}a_{s}ds+\int_{0}^{t}(c_{s}-\frac{1}{2}d_{s}%
^{2})d\langle B\rangle_{s}+\int_{0}^{t}d_{s}dB_{s})$. In this case, we do not
need to construct an auxiliary space. If $b_{t}\neq0$, the form of $X_{t}$
contains $\tilde{B}$, but
\[
Y_{t}=\mathbb{\hat{E}}_{t}^{\tilde{G}}[X_{T}^{t}\xi+\int_{t}^{T}m_{s}X_{s}%
^{t}ds+\int_{t}^{T}n_{s}X_{s}^{t}d\langle B\rangle_{s}]
\]
does not contain $\tilde{B}$, where $X_{s}^{t}=X_{s}/X_{t}$. For simplicity,
we only give an explanation for $\xi=\varphi(B_{T})$, $f_{s}=b_{s}Z_{s}$ with
$b_{s}=\psi(B_{s})$ and $g_{s}=0$ in the $G$-BSDE (\ref{LBSDE1}). In this
case,%
\begin{align*}
Y_{t}  &  =\mathbb{\hat{E}}_{t}^{\tilde{G}}[\varphi(B_{T})\exp(\int_{t}%
^{T}\psi(B_{s})d\tilde{B}_{s}-\frac{1}{2}\int_{t}^{T}|\psi(B_{s})|^{2}%
d\langle\tilde{B}\rangle_{s})]\\
&  =\mathbb{\hat{E}}^{\tilde{G}}[\varphi(x+B_{T}^{t})\exp(\int_{t}^{T}%
\psi(x+B_{s}^{t})d\tilde{B}_{s}-\frac{1}{2}\int_{t}^{T}|\psi(x+B_{s}^{t}%
)|^{2}d\langle\tilde{B}\rangle_{s})]_{x=B_{t}},
\end{align*}
which does not contain $\tilde{B}$, where $B_{s}^{t}=B_{s}-B_{t}$.

Note that $\mathbb{\hat{E}}^{\tilde{G}}[\xi]=\mathbb{\hat{E}}[\xi]$ for each
$\xi\in L_{G}^{1}(\Omega_{T})$, thus this $Y$ in Theorem \ref{the5.2} is the
solution of the $G$-BSDE (\ref{LBSDE1}) in $(\Omega_{T},L_{G}^{1}(\Omega
_{T}),\mathbb{\hat{E}})$. Here $\tilde{B}$ is an auxiliary process and
disappear by taking conditional expectation.
\end{remark}

\begin{remark}
\label{rem5.4} If $b_{s}=0$, $d_{s}=0$, we have the following special type of
$G$-BSDE:%
\begin{equation}
Y_{t}=\mathbb{\hat{E}}_{t}[\xi+\int_{t}^{T}(a_{s}Y_{s}+m_{s})ds+\int_{t}%
^{T}(c_{s}Y_{s}+n_{s})d\langle B\rangle_{s}], \label{LBSDE5}%
\end{equation}
where $\{a_{s}\}_{s\in\lbrack0,T]}$, $\{c_{s}\}_{s\in\lbrack0,T]}$ are bounded
processes in $M_{G}^{1}(0,T)$ and $\xi\in L_{G}^{1}(\Omega)$, $\{m_{s}%
\}_{s\in\lbrack0,T]}$, $\{n_{s}\}_{s\in\lbrack0,T]}\in M_{G}^{1}(0,T)$. By
applying Theorem \ref{the5.2} to $\xi^{N}=(\xi\wedge N)\vee(-N)$, $m_{s}%
^{N}=(m_{s}\wedge N)\vee(-N)$, $n_{s}^{N}=(n_{s}\wedge N)\vee(-N)$ for each
$N>0$, we obtain that the explicit solution of the $G$-BSDE (\ref{LBSDE5}) is
\begin{equation}
Y_{t}=(X_{t})^{-1}\mathbb{\hat{E}}_{t}[X_{T}\xi+\int_{t}^{T}m_{s}X_{s}%
ds+\int_{t}^{T}n_{s}X_{s}d\langle B\rangle_{s}], \label{LBSDE6}%
\end{equation}
where $X_{t}=\exp(\int_{0}^{t}a_{s}ds+\int_{0}^{t}c_{s}d\langle B\rangle_{s})$.
\end{remark}

In the following, we explain why we have to extend the space. For simplicity,
we only consider%
\[
Y_{t}=\xi+\int_{t}^{T}Z_{s}ds-\int_{t}^{T}Z_{s}dB_{s}-(K_{T}-K_{t}).
\]
In order to get the explicit solution of the above $G$-BSDE, we try to find a
positive process $X$ (not depending on $Y, Z, K$) such that $XY$ is a
$G$-martingale. Applying It\^{o}'s formula to $XY$, we have
\[
d(X_{t}Y_{t})=X_{t}Z_{t}dB_{t}+X_{t}dK_{t}-X_{t}Z_{t}dt+Z_{t}d\langle X,
B\rangle_{t}+Y_{t}dX_{t}.
\]
So as to guarantee that $XY$ is a $G$-martingale, $-X_{t}Z_{t}dt+Z_{t}d\langle
X, B\rangle_{t}+Y_{t}dX_{t}$ should be a symmetric $G$-martingale, which
implies that $X$ is a symmetric $G$-martingale and%
\begin{equation}
\label{exp3}X_{t}dt=d\langle X, B\rangle_{t}.
\end{equation}

By the representation theorem of symmetric $G$-martingales, we assume
$X_{t}=X_{0}+\int_{0}^{t}h_{s}dB_{s}$ for some $h\in M^{2}_{G}(0,T)$. Then
equation (\ref{exp3}) implies that
\[
X_{t}dt=h_{t}d\langle B\rangle_{t}.
\]
By Corollary 3.5 in \cite{Song12}, we have $X\equiv0$ if $\underline{\sigma
}^{2} <\bar{\sigma}^{2}$. So generally we cannot find a proper process $X$ in
the original $G$-expectation space. Actually, in Theorem \ref{the5.2}, we find
a process $X$ in the extended $\widetilde{G}$-expectation space such that $XY$
is a $\widetilde{G}$-martingale instead of $G$-martingale.

Sometimes we say a process $Y\in S^{\alpha}_{G}(0,T)$ with some $\alpha>1$ is
a solution of equation (\ref{e3}) if there exist processes $Z, K$ such that
$(Y,Z, K)\in\mathfrak{S}_{G}^{\alpha}(0,T)$ is a solution of equation
(\ref{e3}).

\begin{proposition}
\label{mfv} Let $K$ be a decreasing $G$-martingale with $K_{T}\in L^{\alpha
}_{G}(\Omega_{T})$ for some $\alpha>1$. Assume that
\[
f(t, K_{t}, 0)=g(t,K_{t}, 0)=0.
\]
Then $K$ is a solution of equation (\ref{e3}).
\end{proposition}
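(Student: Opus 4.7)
The plan is to exhibit a triple $(Y,Z,\tilde K)$ solving (\ref{e3}) with terminal value $\xi=K_T$ and with $Y=K$. The natural candidate is to take $Y_t:=K_t$, $Z_t:=0$, and to \emph{re-use the given $K$} as the decreasing $G$-martingale part of the decomposition, i.e.\ $\tilde K_t:=K_t$. Plugging this into (\ref{e3}) and using the hypothesis $f(s,K_s,0)=g_{ij}(s,K_s,0)=0$, every integral term on the right-hand side vanishes and what remains is $K_T-(K_T-K_t)=K_t$, which matches the left-hand side. So (\ref{e3}) holds as a tautology.

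The remaining task is to check $(K,0,K)\in\mathfrak{S}_G^{\alpha'}(0,T)$ for some $1<\alpha'\le\alpha$. The component $Z=0$ lies in $H_G^{\alpha'}(0,T)$ trivially, and the decreasing $G$-martingale $\tilde K=K$ satisfies $K_0=0$ and $K_T\in L_G^\alpha\subset L_G^{\alpha'}$ by hypothesis. The only nontrivial requirement is $Y=K\in S_G^{\alpha'}(0,T)$. To secure this, I would invoke Theorem \ref{the4.1} with data $(\xi,f,g_{ij})=(K_T,0,0)$, obtaining a unique solution $(\bar Y,\bar Z,\bar K)\in\mathfrak{S}_G^{\alpha'}$ for each $1<\alpha'<\alpha$. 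For the trivial BSDE with zero generators, the $Y$-component is the $G$-expectation of the terminal value, $\bar Y_t=\mathbb{\hat{E}}_t[K_T]$; but since $K$ itself is a $G$-martingale with terminal value $K_T$, one also has $\mathbb{\hat{E}}_t[K_T]=K_t$. Hence $K=\bar Y\in S_G^{\alpha'}(0,T)$, as needed.

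There is essentially no obstacle in the argument: the equation is verified by direct substitution once the vanishing of $(f,g_{ij})$ along $(K,0)$ is invoked, and the only mild subtlety is confirming that $K$ lies in the completion $S_G^{\alpha'}$, which is handled by the detour through the existence-uniqueness Theorem \ref{the4.1} applied to the trivial BSDE with zero generators. No quantitative a priori estimates are needed.
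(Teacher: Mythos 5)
Your proposal is correct and follows essentially the same route as the paper, whose proof simply observes that the triple $(K,0,K)$ solves equation (\ref{e3}). The extra step you include—identifying $K_t=\mathbb{\hat{E}}_{t}[K_T]$ with the $Y$-component of the trivial $G$-BSDE via Theorem \ref{the4.1} to confirm $K\in S_{G}^{\alpha'}(0,T)$—is a sound elaboration of a detail the paper leaves implicit.
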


\begin{proof}
It's easy to check that $(K,0, K)$ is a solution of equation (\ref{e3}).
\end{proof}

\subsection{Comparison theorem of $G$-BSDEs}

\begin{theorem}
\label{the5.5} Let $(Y_{t}^{i},Z_{t}^{i},K_{t}^{i})_{t\leq T}$, $i=1,2$, be
the solutions of the following $G$-BSDEs:%
\[
Y_{t}^{i}=\xi^{i}+\int_{t}^{T}f_{i}(s,Y_{s}^{i},Z_{s}^{i})ds+\int_{t}^{T}%
g_{i}(s,Y_{s}^{i},Z_{s}^{i})d\langle B\rangle_{s}-\int_{t}^{T}Z_{s}^{i}%
dB_{s}-(K_{T}^{i}-K_{t}^{i}),
\]
where $\xi^{i}\in L_{G}^{\beta}(\Omega_{T})$, $f_{i},g_{i}$ satisfy (H1) and
(H2) with $\beta>1$. If $\xi^{1}\geq\xi^{2}$, $f_{1}\geq f_{2}$, $g_{1}\geq
g_{2}$, then $Y_{t}^{1}\geq Y_{t}^{2}$.
\end{theorem}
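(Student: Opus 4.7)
The idea is to linearize the difference $\hat Y := Y^1 - Y^2$ and apply the explicit-solution machinery of Theorem \ref{the5.2}. Set $\hat Z := Z^1 - Z^2$, $\hat\xi := \xi^1 - \xi^2 \ge 0$. By the Lipschitz hypothesis (H2) one can write
\[
f_1(s, Y^1_s, Z^1_s) - f_2(s, Y^2_s, Z^2_s) = a_s \hat Y_s + b_s \hat Z_s + m_s,
\]
with bounded $a_s, b_s$ and $m_s := f_1(s, Y^2_s, Z^2_s) - f_2(s, Y^2_s, Z^2_s) \ge 0$, and similarly $g_1 - g_2 = c_s \hat Y_s + d_s \hat Z_s + n_s$ with bounded $c_s, d_s$ and $n_s \ge 0$. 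Then $\hat Y$ satisfies the linear $G$-BSDE
\[
\hat Y_t = \hat\xi + \int_t^T (a_s \hat Y_s + b_s \hat Z_s + m_s)\,ds + \int_t^T (c_s \hat Y_s + d_s \hat Z_s + n_s)\,d\langle B\rangle_s - \int_t^T \hat Z_s\,dB_s - (K^1_T - K^1_t) + (K^2_T - K^2_t).
\]
Note that $K^1 - K^2$ is not a decreasing $G$-martingale, so Theorem \ref{the5.2} does not apply verbatim.

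In the extended $\tilde G$-space of Theorem \ref{the5.2}, let $X > 0$ solve the dual SDE (\ref{LSDE2}) with these coefficients $a, b, c, d$. Applying It\^o's formula to $X_t \hat Y_t$, the drift terms in $\hat Y$ and $\hat Z$ cancel by construction of $X$, and after rearranging one obtains
\[
X_t \hat Y_t + (N_T - N_t) + \int_t^T X_s\,dK^1_s = X_T \hat\xi + \int_t^T X_s m_s\,ds + \int_t^T X_s n_s\,d\langle B\rangle_s + \int_t^T X_s\,dK^2_s,
\]
where $N$ is the symmetric $\tilde G$-martingale coming from the $dB$- and $d\tilde B$-integrals. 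Taking $\mathbb{\hat E}^{\tilde G}_t$ on both sides: $X_t \hat Y_t$ pulls out by $\mathcal F_t$-measurability, the symmetric increment $N_T - N_t$ drops out, and by Lemma 3.4 of \cite{HJPS} the process $\int_0^\cdot X_s\,dK^1_s$ is a $\tilde G$-martingale. Therefore
\[
X_t \hat Y_t = \mathbb{\hat E}^{\tilde G}_t\Big[X_T \hat\xi + \int_t^T X_s m_s\,ds + \int_t^T X_s n_s\,d\langle B\rangle_s + \int_t^T X_s\,dK^2_s\Big].
\]

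The first three terms inside the expectation are nonnegative since $X > 0$ and $\hat\xi, m, n \ge 0$. The main obstacle is the fourth term $\int_t^T X_s\,dK^2_s$, which is pointwise nonpositive but a $\tilde G$-martingale increment (so of zero conditional $\tilde G$-expectation, yet of possibly large negative pointwise value). I plan to handle this by invoking the representation $\mathbb{\hat E}^{\tilde G}[\cdot] = \sup_{P \in \tilde{\mathcal P}} E_P[\cdot]$ of Theorem \ref{the2.7} together with the weak compactness of $\tilde{\mathcal P}$: one selects $P^* \in \tilde{\mathcal P}$ attaining the supremum that realizes $\mathbb{\hat E}^{\tilde G}_t[\int_t^T X_s\,dK^2_s] = 0$; under such $P^*$ the nonpositive random variable $\int_t^T X_s\,dK^2_s$ has $P^*$-conditional mean zero, hence vanishes $P^*$-a.s., meaning $K^2$ is constant on $[t,T]$ $P^*$-a.s. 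Then
\[
\mathbb{\hat E}^{\tilde G}_t[\,\cdots\,] \ge E_{P^*}\!\Big[X_T \hat\xi + \int_t^T X_s m_s\,ds + \int_t^T X_s n_s\,d\langle B\rangle_s\,\Big|\,\mathcal F_t\Big] \ge 0,
\]
so $X_t \hat Y_t \ge 0$ and, since $X_t > 0$, $Y^1_t \ge Y^2_t$ as desired.
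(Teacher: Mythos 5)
Your overall architecture (linearize the difference, multiply by the dual exponential $X$ in the extended $\tilde G$-space, take $\mathbb{\hat E}^{\tilde G}_t$, use that $\int_0^\cdot X_s\,dK^1_s$ is a $\tilde G$-martingale) is the same as the paper's, but two steps have genuine gaps. First, the linearization: writing $f_1(s,Y^1_s,Z^1_s)-f_1(s,Y^2_s,Z^1_s)=a_s\hat Y_s$ with $a_s$ the bounded quotient (set to $0$ when $\hat Y_s=0$) is not enough, because for the dual SDE (\ref{LSDE2}) and Theorem \ref{the5.2} to apply the coefficients must lie in $M_G^{\beta}(0,T)$, and such quotient/indicator constructions need not be quasi-continuous, hence need not belong to these spaces. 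This is precisely the obstacle the paper's proof is built around: it inserts a Lipschitz cutoff $l$ with $I_{[-\varepsilon,\varepsilon]}\leq l\leq I_{[-2\varepsilon,2\varepsilon]}$, obtains coefficients $a^{\varepsilon}_s=(1-l(\hat Y_s))(\cdots)\hat Y_s^{-1}\in M_G^2(0,T)$ with $|a^{\varepsilon}_s|\leq L$, pays for this with error terms $|m^{\varepsilon}_s|,|n^{\varepsilon}_s|\leq 4L\varepsilon$, and lets $\varepsilon\to0$ at the very end. Without some such device your process $X$ is not known to be well defined.

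Second, the treatment of $\int_t^T X_s\,dK^2_s$ does not work as stated. The conditional expectation $\mathbb{\hat E}^{\tilde G}_t$ is (for each $P$) an essential supremum over the measures of $\tilde{\mathcal P}$ agreeing with $P$ on $\mathcal F_t$, not a supremum attained by one fixed $P^*$, so "selecting $P^*$ that realizes $\mathbb{\hat E}^{\tilde G}_t[\int_t^T X_s\,dK^2_s]=0$" is not meaningful; and even if you pick $P^*$ with $E_{P^*}[\int_t^T X_s\,dK^2_s]=0$ (so that $K^2$ is frozen $P^*$-a.s.), your chain of inequalities yields $X_t\hat Y_t\geq0$ only $P^*$-a.s., whereas the conclusion $Y^1_t\geq Y^2_t$ must hold quasi-surely, i.e.\ under every $P\in\tilde{\mathcal P}$. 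The step can be repaired without any measure selection: since $\{\int_0^t X_s\,dK^2_s\}$ is a $\tilde G$-martingale, $\mathbb{\hat E}^{\tilde G}_t[\int_t^T X_s\,dK^2_s]=0$, and sub-additivity together with $-A\leq0$ (where $A$ denotes your nonnegative terms) gives
\begin{equation*}
0=\mathbb{\hat E}^{\tilde G}_t\Big[\int_t^T X_s\,dK^2_s\Big]\leq\mathbb{\hat E}^{\tilde G}_t\Big[A+\int_t^T X_s\,dK^2_s\Big]+\mathbb{\hat E}^{\tilde G}_t[-A]\leq\mathbb{\hat E}^{\tilde G}_t\Big[A+\int_t^T X_s\,dK^2_s\Big]=X_t\hat Y_t.
\end{equation*}
The paper instead shifts $K^2$ into the unknown, representing $\hat Y_t+K^2_t$ via Theorem \ref{the5.2} and using Proposition \ref{mfv} to identify the $K^2$-dependent part of that representation as exactly $K^2_t$, which then cancels; either route is fine, but yours needs the correction above (and the $\varepsilon$-cutoff) to be complete.
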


\begin{proof}
We have%
\[
\hat{Y}_{t}+K_{t}^{2}=\hat{\xi}+K_{T}^{2}+\int_{t}^{T}\hat{f}_{s}ds+\int%
_{t}^{T}\hat{g}_{s}d\langle B\rangle_{s}-\int_{t}^{T}\hat{Z}_{s}dB_{s}%
-(K_{T}^{1}-K_{t}^{1}),
\]
where $\hat{Y}_{t}=Y_{t}^{1}-Y_{t}^{2}$, $\hat{Z}_{t}=Z_{t}^{1}-Z_{t}^{2}$,
$\hat{\xi}=\xi^{1}-\xi^{2}\geq0$, $\hat{f}_{s}=f_{1}(s,Y_{s}^{1},Z_{s}%
^{1})-f_{2}(s,Y_{s}^{2},Z_{s}^{2})$, $\hat{g}_{s}=g_{1}(s,Y_{s}^{1},Z_{s}%
^{1})-g_{2}(s,Y_{s}^{2},Z_{s}^{2})$. For each given $\varepsilon>0$, we can
choose Lipschitz function $l(\cdot)$ such that $I_{[-\varepsilon,\varepsilon
]}\leq l(x)\leq I_{[-2\varepsilon,2\varepsilon]}$. Thus we have%
\[
f_{1}(s,Y_{s}^{1},Z_{s}^{1})-f_{1}(s,Y_{s}^{2},Z_{s}^{1})=(f_{1}(s,Y_{s}%
^{1},Z_{s}^{1})-f_{1}(s,Y_{s}^{2},Z_{s}^{1}))l(\hat{Y}_{s})+a_{s}%
^{\varepsilon}\hat{Y}_{s},
\]
where $a_{s}^{\varepsilon}=(1-l(\hat{Y}_{s}))(f_{1}(s,Y_{s}^{1},Z_{s}%
^{1})-f_{1}(s,Y_{s}^{2},Z_{s}^{1}))\hat{Y}_{s}^{-1}\in M_{G}^{2}(0,T)$ such
that $|a_{s}^{\varepsilon}|\leq L$. It is easy to verify that%
\[
|(f_{1}(s,Y_{s}^{1},Z_{s}^{1})-f_{1}(s,Y_{s}^{2},Z_{s}^{1}))l(\hat{Y}%
_{s})|\leq L|\hat{Y}_{s}|l(\hat{Y}_{s})\leq2L\varepsilon.
\]
Thus we can get%
\[
\hat{f}_{s}=a_{s}^{\varepsilon}\hat{Y}_{s}+b_{s}^{\varepsilon}\hat{Z}%
_{s}+m_{s}-m_{s}^{\varepsilon},\ \hat{g}_{s}=c_{s}^{\varepsilon}\hat{Y}%
_{s}+d_{s}^{\varepsilon}\hat{Z}_{s}+n_{s}-n_{s}^{\varepsilon},
\]
where $|m_{s}^{\varepsilon}|\leq4L\varepsilon$, $|n_{s}^{\varepsilon}%
|\leq4L\varepsilon$, $m_{s}=f_{1}(s,Y_{s}^{2},Z_{s}^{2})-f_{2}(s,Y_{s}%
^{2},Z_{s}^{2})\geq0$ and $n_{s}=g_{1}(s,Y_{s}^{2},Z_{s}^{2})-g_{2}%
(s,Y_{s}^{2},Z_{s}^{2})\geq0$. By Theorem \ref{the5.2}, in the extended space,
we have%
\begin{align*}
&  \hat{Y}_{t}+K_{t}^{2}\\
&  =(X_{t}^{\varepsilon})^{-1}\mathbb{\hat{E}}_{t}^{\tilde{G}}[X_{T}%
^{\varepsilon}(\hat{\xi}+K_{T}^{2})+\int_{t}^{T}(m_{s}-m_{s}^{\varepsilon
}-a_{s}^{\varepsilon}K_{s}^{2})X_{s}^{\varepsilon}ds\\
&  +\int_{t}^{T}(n_{s}-n_{s}^{\varepsilon}-c_{s}^{\varepsilon}K_{s}^{2}%
)X_{s}^{\varepsilon}d\langle B\rangle_{s}]\\
&  \geq(X_{t}^{\varepsilon})^{-1}\mathbb{\hat{E}}_{t}^{\tilde{G}}%
[X_{T}^{\varepsilon}K_{T}^{2}-\int_{t}^{T}(m_{s}^{\varepsilon}+a_{s}%
^{\varepsilon}K_{s}^{2})X_{s}^{\varepsilon}ds-\int_{t}^{T}(n_{s}^{\varepsilon
}+c_{s}^{\varepsilon}K_{s}^{2})X_{s}^{\varepsilon}d\langle B\rangle_{s}]\\
&  \geq(X_{t}^{\varepsilon})^{-1}\{ \mathbb{\hat{E}}_{t}^{\tilde{G}}%
[X_{T}^{\varepsilon}K_{T}^{2}-\int_{t}^{T}a_{s}^{\varepsilon}K_{s}^{2}%
X_{s}^{\varepsilon}ds-\int_{t}^{T}c_{s}^{\varepsilon}K_{s}^{2}X_{s}%
^{\varepsilon}d\langle B\rangle_{s}]\\
&  -\mathbb{\hat{E}}_{t}^{\tilde{G}}[\int_{t}^{T}m_{s}^{\varepsilon}%
X_{s}^{\varepsilon}ds+\int_{t}^{T}n_{s}^{\varepsilon}X_{s}^{\varepsilon
}d\langle B\rangle_{s}]\},
\end{align*}
where $\{X_{t}^{\varepsilon}\}_{t\in\lbrack0,T]}$ is the solution of the
following $\tilde{G}$-SDE:%
\[
X_{t}^{\varepsilon}=1+\int_{0}^{t}a_{s}^{\varepsilon}X_{s}^{\varepsilon
}ds+\int_{0}^{t}c_{s}^{\varepsilon}X_{s}^{\varepsilon}d\langle B\rangle
_{s}+\int_{0}^{t}d_{s}^{\varepsilon}X_{s}^{\varepsilon}dB_{s}+\int_{0}%
^{t}b_{s}^{\varepsilon}X_{s}^{\varepsilon}d\tilde{B}_{s}.
\]
By Theorem \ref{the5.2} and Proposition \ref{mfv}, we get%
\[
(X_{t}^{\varepsilon})^{-1}\mathbb{\hat{E}}_{t}^{\tilde{G}}[X_{T}^{\varepsilon
}K_{T}^{2}-\int_{t}^{T}a_{s}^{\varepsilon}K_{s}^{2}X_{s}^{\varepsilon}%
ds-\int_{t}^{T}c_{s}^{\varepsilon}K_{s}^{2}X_{s}^{\varepsilon}d\langle
B\rangle_{s}]=K_{t}^{2}.
\]
Thus%
\[
\hat{Y}_{t}\geq-4L\varepsilon(X_{t}^{\varepsilon})^{-1}\hat{E}^{\tilde{G}%
}[\int_{t}^{T}|X_{s}^{\varepsilon}|ds+\int_{t}^{T}|X_{s}^{\varepsilon
}|d\langle B\rangle_{s}],
\]
which complete the proof by letting $\varepsilon\rightarrow0$.
\end{proof}

\begin{theorem}
\label{the5.6} Let $(Y_{t}^{i},Z_{t}^{i},K_{t}^{i})_{t\leq T}$, $i=1,2$, be
the solutions of the following $G$-BSDEs:%
\[
Y_{t}^{i}=\xi^{i}+\int_{t}^{T}f_{i}(s)ds+\int_{t}^{T}g_{i}(s)d\langle
B\rangle_{s}+V_{T}^{i}-V_{t}^{i}-\int_{t}^{T}Z_{s}^{i}dB_{s}-(K_{T}^{i}%
-K_{t}^{i}),
\]
where $f_{i}(s)=f_{i}(s,Y_{s}^{i},Z_{s}^{i})$, $g_{i}(s)=g_{i}(s,Y_{s}%
^{i},Z_{s}^{i})$, $\xi^{i}\in L_{G}^{\beta}(\Omega_{T})$, $f_{i},g_{i}$
satisfy (H1) and (H2), $(V_{t}^{i})_{t\leq T}$ are RCLL processes such that
$\mathbb{\hat{E}}[\sup_{t\in\lbrack0,T]}|V_{t}^{i}|^{\beta}]<\infty$ with
$\beta>1$. If $\xi^{1}\geq\xi^{2}$, $f_{1}\geq f_{2}$, $g_{1}\geq g_{2}$,
$V_{t}^{1}-V_{t}^{2}$ is an increasing process, then $Y_{t}^{1}\geq Y_{t}^{2}$.
\end{theorem}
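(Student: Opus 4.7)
My plan is to adapt the proof of Theorem~\ref{the5.5}, treating the RCLL increments $V_T^i - V_t^i$ as additional (bounded-variation) driving terms. Let $\hat{Y}_t = Y_t^1 - Y_t^2$, $\hat{Z}_t = Z_t^1 - Z_t^2$, $\hat{V}_t = V_t^1 - V_t^2$ (without loss of generality $\hat{V}_0 = 0$, so $\hat{V}$ is nonnegative and increasing), $\hat{\xi} = \xi^1 - \xi^2 \ge 0$, and $\hat{f}_s, \hat{g}_s$ defined analogously. Subtracting the two BSDEs gives
\begin{align*}
\hat{Y}_t + K_t^2 = \hat{\xi} + K_T^2 &+ (\hat{V}_T - \hat{V}_t) + \int_t^T \hat{f}_s\,ds + \int_t^T \hat{g}_s\,d\langle B\rangle_s \\
&- \int_t^T \hat{Z}_s\,dB_s - (K_T^1 - K_t^1).
\end{align*}
The only new feature compared with Theorem~\ref{the5.5} is the increment $\hat{V}_T - \hat{V}_t$, which makes $\hat{Y}$ RCLL with $\Delta \hat{Y}_t = -\Delta \hat{V}_t$.

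I would then linearize $\hat{f}$ and $\hat{g}$ exactly as in the proof of Theorem~\ref{the5.5}, obtaining bounded processes $a^\varepsilon, b^\varepsilon, c^\varepsilon, d^\varepsilon$ and residuals with $m_s, n_s \ge 0$ and $|m_s^\varepsilon|, |n_s^\varepsilon| \le 4L\varepsilon$. In the extended $\tilde{G}$-space, I would let $X^\varepsilon$ solve the linear $\tilde{G}$-SDE~(\ref{LSDE2}) with these coefficients; it is strictly positive and (crucially) continuous. I would then apply the semimartingale It\^o formula to $X_t^\varepsilon(\hat{Y}_t + K_t^2)$. Since $X^\varepsilon$ is continuous, the cross-jump contributions $\Delta X^\varepsilon\,\Delta(\hat{Y}+K^2)$ vanish and the continuous and jump parts of $\hat{V}$ consolidate into the single integral $-\int_t^T X_s^\varepsilon\,d\hat{V}_s$.

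Mimicking the computation in the proof of Theorem~\ref{the5.2}, the linear drift cancels by construction of $X^\varepsilon$; after taking conditional $\tilde{G}$-expectation (using that $\int_0^\cdot X_s^\varepsilon\,dK_s^1$ is a $\tilde{G}$-martingale by Lemma~3.4 of~\cite{HJPS}), one obtains
\begin{align*}
\hat{Y}_t + K_t^2 \ge (X_t^\varepsilon)^{-1}\mathbb{\hat{E}}_t^{\tilde{G}}\Big[&X_T^\varepsilon(\hat{\xi}+K_T^2) + \int_t^T X_s^\varepsilon\,d\hat{V}_s \\
&- \int_t^T a_s^\varepsilon K_s^2 X_s^\varepsilon\,ds - \int_t^T c_s^\varepsilon K_s^2 X_s^\varepsilon\,d\langle B\rangle_s\Big] - 4L\varepsilon\,C,
\end{align*}
for some constant $C$ independent of $\varepsilon$. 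Because $X^\varepsilon > 0$ and $\hat{V}$ is increasing, $\int_t^T X_s^\varepsilon\,d\hat{V}_s \ge 0$; dropping this together with the contributions of $\hat{\xi}, m_s, n_s \ge 0$ and applying Proposition~\ref{mfv} to identify the weighted $K^2$ expression with $K_t^2$ cancels the $K_t^2$ on the left-hand side, leaving $\hat{Y}_t \ge -4L\varepsilon\,C$. Letting $\varepsilon \to 0$ finishes the proof.

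The main obstacle I foresee is the rigorous handling of the jumps of $\hat{Y}$ in the It\^o expansion, since the original argument of Theorem~\ref{the5.2} is tailored to continuous semimartingales. The key observation is that the continuity of the weight $X^\varepsilon$ is what makes the bracket computation clean: all cross-jumps vanish, and the monotonicity hypothesis on $V^1 - V^2$ then enters precisely to give the favorable sign of the extra term $\int X^\varepsilon\,d\hat{V}$.
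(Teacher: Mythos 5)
Your proposal is correct and is exactly what the paper intends: its proof of Theorem \ref{the5.6} consists solely of the remark that the argument is similar to Theorem \ref{the5.5}, and your write-up fills in precisely that adaptation. The one genuinely new ingredient — that the continuity of $X^{\varepsilon}$ lets the RCLL increments of $\hat{V}$ enter only through the term $\int_t^T X_s^{\varepsilon}\,d\hat{V}_s\ge 0$, which can be dropped by monotonicity — is the intended point, and the rest (linearization, the $\tilde{G}$-martingale property of $\int X^{\varepsilon}\,dK^1$, Proposition \ref{mfv} to absorb the $K^2$ terms, and $\varepsilon\to 0$) matches the paper's argument.
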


\begin{proof}
The proof is similar to that of Theorem \ref{the5.5}.
\end{proof}

\begin{remark}
\label{rem5.7} If $f_{i},g_{i}$, $i=1,2$, do not contain $Z$, we get the
following special $G$-BSDEs:
\[
Y_{t}^{i}=\mathbb{\hat{E}}_{t}[\xi^{i}+\int_{t}^{T}f_{i}(s,Y_{s}^{i}%
)ds+\int_{t}^{T}g_{i}(s,Y_{s}^{i})d\langle B\rangle_{s}].
\]
The same as in Remark \ref{rem5.4}, here we suppose that $\xi\in L_{G}%
^{1}(\Omega)$, $\{f_{i}(s,y)\}_{s\in\lbrack0,T]}\in M_{G}^{1}(0,T)$ and
$\{g_{i}(s,y)\}_{s\in\lbrack0,T]}\in M_{G}^{1}(0,T)$ for each $y\in\mathbb{R}%
$, $f_{i}$ and $g_{i}$ satisfy the Lipschitz condition with respect to $y$.
The comparison theorem still holds for this case.
\end{remark}

In the following, we give an example to show that the strict comparison
theorem does not hold.

\begin{example}
\label{exa5.8} We consider the simplest $G$-BSDE:
\[
Y_{t}=\xi-\int_{t}^{T}Z_{s}dB_{s}-(K_{T}-K_{t}),
\]
the solution $Y_{t}=\mathbb{\hat{E}}_{t}[\xi]$, $t\in\lbrack0,T]$. Let
$\xi^{1}=0$ and $\xi^{2}=\langle B\rangle_{T}-\bar{\sigma}^{2}T$. It is easy
to verify that $\xi^{1}\geq\xi^{2}$ and $\mathbb{\hat{E}}[\xi^{1}-\xi^{2}]>0$
for the case $\underline{\sigma}<\bar{\sigma}$. But $\mathbb{\hat{E}}[\xi
^{1}]=\mathbb{\hat{E}}[\xi^{2}]=0$.
\end{example}

We now give an application of comparison theorem.

\begin{theorem}
\label{the5.9} (Gronwall inequality) Let $(Y_{t})_{t\leq T}\in S_{G}^{1}(0,T)$
satisfy
\[
Y_{t}\leq\mathbb{\hat{E}}_{t}[\xi+\int_{t}^{T}f(s,Y_{s})ds+\int_{t}%
^{T}g(s,Y_{s})d\langle B\rangle_{s}],
\]
where $\xi\in L_{G}^{1}(\Omega)$, $\{f(s,y)\}_{s\in\lbrack0,T]}\in M_{G}%
^{1}(0,T)$ and $\{g(s,y)\}_{s\in\lbrack0,T]}\in M_{G}^{1}(0,T)$ for each
$y\in\mathbb{R}$, $f$ and $g$ satisfy the Lipschitz condition with respect to
$y$, $f(\cdot,y_{1})\leq f(\cdot,y_{2})$ and $g(\cdot,y_{1})\leq g(\cdot
,y_{2})$ for each $y_{1}\leq y_{2}$. Then $Y_{t}\leq\tilde{Y}_{t}$, where
$(\tilde{Y}_{t})_{t\leq T}$ is the solution of the following $G$-BSDE:%
\[
\tilde{Y}_{t}=\mathbb{\hat{E}}_{t}[\xi+\int_{t}^{T}f(s,\tilde{Y}_{s}%
)ds+\int_{t}^{T}g(s,\tilde{Y}_{s})d\langle B\rangle_{s}].
\]
In particular, if $f(s,y)=a_{s}y+m_{s}$, $g(s,y)=c_{s}y+n_{s}$, where
$a_{s}\geq0$, $c_{s}\geq0$, then
\begin{equation}
Y_{t}\leq(X_{t})^{-1}\mathbb{\hat{E}}_{t}[X_{T}\xi+\int_{t}^{T}m_{s}%
X_{s}ds+\int_{t}^{T}n_{s}X_{s}d\langle B\rangle_{s}], \label{Gronwall}%
\end{equation}
where $X_{t}=\exp(\int_{0}^{t}a_{s}ds+\int_{0}^{t}c_{s}d\langle B\rangle_{s})$.
\end{theorem}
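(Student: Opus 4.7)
The plan is to first establish $Y_t\leq \tilde{Y}_t$ via a monotone Picard iteration, and then to obtain (\ref{Gronwall}) in the linear case by quoting Remark \ref{rem5.4}. Define $Y^{(0)}:=Y$ and, for $n\geq 0$,
\[
Y^{(n+1)}_t := \mathbb{\hat{E}}_t\Bigl[\xi + \int_t^T f(s,Y^{(n)}_s)\,ds + \int_t^T g(s,Y^{(n)}_s)\,d\langle B\rangle_s\Bigr].
\]
The hypothesis on $Y$ is exactly $Y^{(0)}\leq Y^{(1)}$. Assuming inductively that $Y^{(n-1)}\leq Y^{(n)}$ q.s., the monotonicity of $f$ and $g$ in $y$ gives $f(s,Y^{(n-1)}_s)\leq f(s,Y^{(n)}_s)$ and $g(s,Y^{(n-1)}_s)\leq g(s,Y^{(n)}_s)$, and then the monotonicity of $\mathbb{\hat{E}}_t$ (Proposition \ref{proA.8}(i)) yields $Y^{(n)}\leq Y^{(n+1)}$. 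Hence the sequence $\{Y^{(n)}\}$ is q.s.\ increasing in $n$.

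Next, I would show that $Y^{(n)}\to \tilde{Y}$ in a suitable norm, using that $\tilde{Y}$ is the unique fixed point of the operator $\Psi(U)_t:=\mathbb{\hat{E}}_t[\xi+\int_t^T f(s,U_s)\,ds+\int_t^T g(s,U_s)\,d\langle B\rangle_s]$. By sub-additivity in the form $|\mathbb{\hat{E}}_t[A]-\mathbb{\hat{E}}_t[B]|\leq \mathbb{\hat{E}}_t[|A-B|]$ together with the Lipschitz property of $f,g$, a routine estimate shows that $\Psi$ is a contraction on $S_G^1$ over a small interval $[T-\delta,T]$; the argument is then propagated backward by subdivision of $[0,T]$ (alternatively, an exponentially weighted norm in $t$ yields contractivity in a single step). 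Passing to the limit in the monotone chain $Y=Y^{(0)}\leq Y^{(1)}\leq\cdots\to \tilde{Y}$ then gives $Y_t\leq \tilde{Y}_t$ q.s.

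Finally, for the linear case $f(s,y)=a_sy+m_s$, $g(s,y)=c_sy+n_s$ with $a_s,c_s\geq 0$, the nonnegativity of $a,c$ is precisely the monotonicity of $f,g$ in $y$ used above, so the preceding inequality $Y\leq \tilde{Y}$ applies. Remark \ref{rem5.4} then gives the explicit representation
\[
\tilde{Y}_t = (X_t)^{-1}\mathbb{\hat{E}}_t\Bigl[X_T\xi + \int_t^T m_sX_s\,ds + \int_t^T n_sX_s\,d\langle B\rangle_s\Bigr],
\]
with $X_t=\exp\bigl(\int_0^t a_s\,ds+\int_0^t c_s\,d\langle B\rangle_s\bigr)$, which combined with $Y\leq \tilde{Y}$ is exactly (\ref{Gronwall}).

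The main obstacle I anticipate is justifying $Y^{(n)}\to \tilde{Y}$ rigorously in the $G$-framework, since the usual linear-expectation arguments must be replaced by their sublinear analogues. However, because $f,g$ here do not depend on $Z$, the situation is considerably simpler than for general $G$-BSDEs, and the time-subdivision or weighted-norm approach should go through without difficulty.
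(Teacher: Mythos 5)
Your proposal is correct, but it proves the first inequality by a genuinely different route than the paper. The paper's proof is a direct application of the comparison theorem it has just established: it sets $\delta_t:=\mathbb{\hat{E}}_t[\xi+\int_t^T f(s,Y_s)ds+\int_t^T g(s,Y_s)d\langle B\rangle_s]-Y_t\geq 0$, observes that $\bar Y:=Y+\delta$ solves the $Z$-free $G$-BSDE with shifted drivers $f(s,\bar Y_s-\delta_s)$, $g(s,\bar Y_s-\delta_s)$, which are dominated by $f(s,\cdot)$, $g(s,\cdot)$ because $\delta\geq 0$ and $f,g$ are nondecreasing in $y$, and then invokes Theorem \ref{the5.5} (in the form of Remark \ref{rem5.7}) to conclude $Y_t\leq\bar Y_t\leq\tilde Y_t$; the linear case is then read off from (\ref{LBSDE6}), exactly as in your last step. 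You instead run a monotone Picard iteration: the hypothesis is $Y^{(0)}\leq Y^{(1)}$, monotonicity of $f,g$ in $y$ together with the fact that $\langle B\rangle$ is increasing and Proposition \ref{proA.8}(i) propagate the order, and the contraction estimate (essentially the same fixed-point argument that makes this $Z$-free equation solvable with $L_G^1$ data, cf. Remarks \ref{rem5.4} and \ref{rem5.7}) gives, e.g., $\sup_t\mathbb{\hat{E}}[|Y_t^{(n+1)}-Y_t^{(n)}|]\leq (L(1+\bar\sigma^2)T)^n/n!\cdot\sup_t\mathbb{\hat{E}}[|Y_t^{(1)}-Y_t^{(0)}|]$, whence $\mathbb{\hat{E}}[(Y_t-\tilde Y_t)^+]\leq\mathbb{\hat{E}}[|Y_t^{(n)}-\tilde Y_t|]\to 0$ and $Y_t\leq\tilde Y_t$ q.s. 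Your route is more elementary and self-contained: it bypasses Theorem \ref{the5.5} (and hence the auxiliary extended $\tilde G$-space behind its proof) and reconstructs $\tilde Y$ along the way; the paper's proof is shorter once the comparison theorem is in hand and deliberately presents the Gronwall inequality as an application of that theorem. When writing yours up, do record the routine points that $f(\cdot,Y^{(n)}_\cdot),\,g(\cdot,Y^{(n)}_\cdot)\in M_G^1(0,T)$ at every stage and that $d\langle B\rangle_s\leq\bar\sigma^2\,ds$ q.s. (used both in the monotone step and in the contraction bound), and phrase the limit passage via positive parts as above rather than via q.s. convergent subsequences.
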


\begin{proof}
We set%
\[
\delta_{t}=\mathbb{\hat{E}}_{t}[\xi+\int_{t}^{T}f(s,Y_{s})ds+\int_{t}%
^{T}g(s,Y_{s})d\langle B\rangle_{s}]-Y_{t}\geq0\text{,}%
\]
then%
\begin{align*}
Y_{t}+\delta_{t}  &  =\mathbb{\hat{E}}_{t}[\xi+\int_{t}^{T}f(s,Y_{s}%
)ds+\int_{t}^{T}g(s,Y_{s})d\langle B\rangle_{s}]\\
&  =\mathbb{\hat{E}}_{t}[\xi+\int_{t}^{T}f(s,Y_{s}+\delta_{s}-\delta
_{s})ds+\int_{t}^{T}g(s,Y_{s}+\delta_{s}-\delta_{s})d\langle B\rangle_{s}].
\end{align*}
Thus $(Y_{t}+\delta_{t})_{t\leq T}$ is the solution of the following $G$-BSDE:%
\[
\bar{Y}_{t}=\mathbb{\hat{E}}_{t}[\xi+\int_{t}^{T}f(s,\bar{Y}_{s}-\delta
_{s})ds+\int_{t}^{T}g(s,\bar{Y}_{s}-\delta_{s})d\langle B\rangle_{s}].
\]
By comparison theorem of $G$-BSDEs, we get $\bar{Y}_{t}\leq\tilde{Y}_{t}$.
Thus $Y_{t}\leq\tilde{Y}_{t}$. By formula (\ref{LBSDE6}) , we get
(\ref{Gronwall}).
\end{proof}

\section{Nonlinear Feynman-Kac Formula}

In this section, we give the nonlinear Feynman-Kac Formula which was studied
in Peng \cite{P10} for special type of $G$-BSDEs. Let $G:\mathbb{S}%
_{d}\rightarrow\mathbb{R}$ be a given monotonic and sublinear function such
that $G(A)-G(B)\geq\underline{\sigma}^{2}\mathrm{tr}[A-B]$ for any $A\geq B$
and $B_{t}=(B_{t}^{i})_{i=1}^{d}$ be the corresponding $G$-Brownian motion. We
consider the following type of $G$-FBSDEs:%
\begin{equation}
dX_{s}^{t,\xi}=b(s,X_{s}^{t,\xi})ds+h_{ij}(s,X_{s}^{t,\xi})d\langle
B^{i},B^{j}\rangle_{s}+\sigma_{j}(s,X_{s}^{t,\xi})dB_{s}^{j},\ X_{t}^{t,\xi
}=\xi, \label{App1}%
\end{equation}

\begin{align}
Y_{s}^{t,\xi}  &  =\Phi(X_{T}^{t,\xi})+\int_{s}^{T}f(r,X_{r}^{t,\xi}%
,Y_{r}^{t,\xi},Z_{r}^{t,\xi})dr+\int_{s}^{T}g_{ij}(r,X_{r}^{t,\xi}%
,Y_{r}^{t,\xi},Z_{r}^{t,\xi})d\langle B^{i},B^{j}\rangle_{r}\nonumber\\
&  -\int_{s}^{T}Z_{r}^{t,\xi}dB_{r}-(K_{T}^{t,\xi}-K_{s}^{t,\xi}),
\label{App2}%
\end{align}
where $b$, $h_{ij}$, $\sigma_{j}:[0,T]\times\mathbb{R}^{n}\rightarrow
\mathbb{R}^{n}$, $\Phi:\mathbb{R}^{n}\rightarrow\mathbb{R}$, $f$, $g_{ij}:$
$[0,T]\times\mathbb{R}^{n}\times\mathbb{R}\times\mathbb{R}^{d}\rightarrow
\mathbb{R}$ are deterministic functions and satisfy the following conditions:

\begin{description}
\item[(A1)] $h_{ij}=h_{ji}$ and $g_{ij}=g_{ji}$ for $1\leq i,j\leq d$;

\item[(A2)] $b$, $h_{ij}$, $\sigma_{j}$, $f$, $g_{ij}$ are continuous in $t$;

\item[(A3)] There exist a positive integer $m$ and a constant $L>0$ such that%
\[
|b(t,x)-b(t,x^{\prime})|+\sum_{i,j=1}^{d}|h_{ij}(t,x)-h_{ij}(t,x^{\prime
})|+\sum_{j=1}^{d}|\sigma_{j}(t,x)-\sigma_{j}(t,x^{\prime})|\leq
L|x-x^{\prime}|,
\]%
\[
|\Phi(x)-\Phi(x^{\prime})|\leq L(1+|x|^{m}+|x^{\prime}|^{m})|x-x^{\prime}|,
\]
\begin{align*}
&  |f(t,x,y,z)-f(t,x^{\prime},y^{\prime},z^{\prime})|+\sum_{i,j=1}^{d}%
|g_{ij}(t,x,y,z)-g_{ij}(t,x^{\prime},y^{\prime},z^{\prime})|\\
&  \leq L[(1+|x|^{m}+|x^{\prime}|^{m})|x-x^{\prime}|+|y-y^{\prime
}|+|z-z^{\prime}|].
\end{align*}

\end{description}

We have the following estimates of $G$-SDEs which can be found in Chapter V in
Peng \cite{P10}.

\begin{proposition}
\label{proA.1} Let $\xi$, $\xi^{\prime}\in L_{G}^{p}(\Omega_{t};\mathbb{R}%
^{n})$ with $p\geq2$. Then we have, for each $\delta\in\lbrack0,T-t]$,%
\[
\mathbb{\hat{E}}_{t}[|X_{t+\delta}^{t,\xi}-X_{t+\delta}^{t,\xi^{\prime}}%
|^{p}]\leq C|\xi-\xi^{\prime}|^{p},
\]%
\[
\mathbb{\hat{E}}_{t}[|X_{t+\delta}^{t,\xi}|^{p}]\leq C(1+|\xi|^{p}),
\]%
\[
\mathbb{\hat{E}}_{t}[\sup_{s\in\lbrack t,t+\delta]}|X_{s}^{t,\xi}-\xi
|^{p}]\leq C(1+|\xi|^{p})\delta^{p/2},
\]
where the constant $C$ depends on $L$, $G$, $p$, $n$ and $T$.
\end{proposition}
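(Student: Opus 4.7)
The plan is to prove all three estimates by the standard SDE fixed-point/Gronwall technique, adapted to the $G$-framework. The main tools are: linear growth of the coefficients (a consequence of (A3)), the BDG-type inequality from Proposition \ref{proA.5}/Remark \ref{nnrem1} applied to the stochastic integral $\int \sigma_j(r,X_r)dB_r^j$, a Hölder-type estimate for the $dr$ and $d\langle B^i,B^j\rangle_r$ integrals (using that $d\langle B^i,B^j\rangle_s$ is absolutely continuous with density bounded by $\bar{\sigma}^{2}$), and Gronwall's lemma in integral form.

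First I would prove (ii). Writing the SDE out, applying $|a+b+c+d|^{p}\leq C_{p}(|a|^{p}+|b|^{p}+|c|^{p}+|d|^{p})$, taking conditional $\mathbb{\hat{E}}_{t}$, and using Hölder on the finite-variation pieces together with Remark \ref{nnrem1} on the stochastic integral part, yields
\[
\mathbb{\hat{E}}_{t}\bigl[\sup_{u\in[t,s]}|X_{u}^{t,\xi}|^{p}\bigr]\leq C\bigl(1+|\xi|^{p}\bigr)+C\int_{t}^{s}\mathbb{\hat{E}}_{t}\bigl[\sup_{u\in[t,r]}|X_{u}^{t,\xi}|^{p}\bigr]\,dr,
\]
where the linear-growth estimate $|b(r,x)|+\sum|h_{ij}(r,x)|+\sum|\sigma_{j}(r,x)|\leq C(1+|x|)$ was used. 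The (deterministic) Gronwall inequality then gives (ii), and in fact the supremum version $\mathbb{\hat{E}}_{t}[\sup_{s\in[t,t+\delta]}|X_{s}^{t,\xi}|^{p}]\leq C(1+|\xi|^{p})$.

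For (i), I would subtract the two SDEs driven by $\xi$ and $\xi'$ and apply the same procedure to $\Delta_{s}:=X_{s}^{t,\xi}-X_{s}^{t,\xi'}$; the Lipschitz assumption in (A3) gives $|b(r,x)-b(r,x')|+\cdots\leq C|x-x'|$, so the Hölder/BDG step yields
\[
\mathbb{\hat{E}}_{t}\bigl[\sup_{u\in[t,s]}|\Delta_{u}|^{p}\bigr]\leq C|\xi-\xi'|^{p}+C\int_{t}^{s}\mathbb{\hat{E}}_{t}\bigl[\sup_{u\in[t,r]}|\Delta_{u}|^{p}\bigr]\,dr,
\]
and Gronwall closes the argument. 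For (iii), I would plug the bound from (ii) back into the three integral terms, which each produce an explicit power of $\delta$: the $dr$ term contributes $\delta^{p}$ via Hölder, the $d\langle B^{i},B^{j}\rangle_{r}$ term likewise contributes $\delta^{p}$ since that variation is dominated by $\bar{\sigma}^{2}\,dr$, and the stochastic integral produces $\delta^{p/2}$ via Remark \ref{nnrem1}; taking the dominant $\delta^{p/2}$ factor yields the claim.

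The only delicate point is the conditional form of the BDG inequality on the interval $[t,t+\delta]$ for $\xi\in L_{G}^{p}(\Omega_{t};\mathbb{R}^{n})$ treated as an initial datum — one must verify that the relevant integrands lie in $H_{G}^{p}$ over $[t,t+\delta]$ so that Remark \ref{nnrem1} applies, which is standard once existence and regularity of $X^{t,\xi}$ in $S_{G}^{p}$ are in hand. Everything else is a routine Lipschitz/Gronwall iteration, and this is exactly the approach carried out in Chapter V of \cite{P10}, which is why the statement is quoted here rather than re-derived.
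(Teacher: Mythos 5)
Your proposal is correct and follows essentially the same route as the paper: the paper's own (sketched) proof also combines the Lipschitz/linear-growth bounds from (A3), the conditional BDG-type estimate of Remark \ref{nnrem1} (after noting $X^{t,\xi}, X^{t,\xi'}$ lie in the appropriate integrability class), and a pathwise Gronwall iteration, then extracts the $\delta^{p/2}$ factor for the third estimate exactly as you describe. The only cosmetic difference is the order (the paper details the Lipschitz-dependence estimate first and declares the others similar), so there is nothing substantive to add.
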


\begin{proof}
For convenience of the reader, we sketch the proof. It is easy to verify that
$(X_{s}^{t,\xi})_{s\in\lbrack t,T]}$, $(X_{s}^{t,\xi^{\prime}})_{s\in\lbrack
t,T]}\in M_{G}^{p}(0,T;\mathbb{R}^{n})$. By Remark \ref{nnrem1}, we can get%
\begin{align*}
\mathbb{\hat{E}}_{t}[|X_{t+\delta}^{t,\xi}-X_{t+\delta}^{t,\xi^{\prime}}%
|^{p}]  &  \leq C_{1}(|\xi-\xi^{\prime}|^{p}+\mathbb{\hat{E}}_{t}[\int%
_{t}^{t+\delta}|X_{s}^{t,\xi}-X_{s}^{t,\xi^{\prime}}|^{p}ds])\\
&  \leq C_{1}(|\xi-\xi^{\prime}|^{p}+\int_{t}^{t+\delta}\mathbb{\hat{E}}%
_{t}[|X_{s}^{t,\xi}-X_{s}^{t,\xi^{\prime}}|^{p}]ds),
\end{align*}
where the constant $C_{1}$ depends on $L$, $G$, $p$, $n$ and $T$. By the
Gronwall inequality, we obtain%
\[
\mathbb{\hat{E}}_{t}[|X_{t+\delta}^{t,\xi}-X_{t+\delta}^{t,\xi^{\prime}}%
|^{p}]\leq C_{1}\exp(C_{1}T)|\xi-\xi^{\prime}|^{p}.
\]
Then we get the first inequality. The other inequalities can be proved similarly.
\end{proof}

\begin{proposition}
\label{proA.2} For each $\xi$, $\xi^{\prime}\in L_{G}^{4m+1}(\Omega
_{t};\mathbb{R}^{n})$, we have%
\[
|Y_{t}^{t,\xi}-Y_{t}^{t,\xi^{\prime}}|\leq C(1+|\xi|^{m}+|\xi^{\prime}%
|^{m})|\xi-\xi^{\prime}|,
\]%
\[
|Y_{t}^{t,\xi}|\leq C(1+|\xi|^{m+1}),
\]
where the constant $C$ depends on $L$, $G$, $n$ and $T$.
\end{proposition}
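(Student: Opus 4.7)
The two estimates follow from combining the $G$-BSDE a priori estimates (Propositions \ref{pro3.4} and \ref{pro3.5}) with the $G$-SDE estimates (Proposition \ref{proA.1}), using Hölder's inequality to separate the polynomial growth factor from the Lipschitz factor. The assumption $\xi\in L_G^{4m+1}(\Omega_t;\mathbb{R}^n)$ is precisely the integrability needed to make the two applications of Hölder go through with an exponent $\alpha$ strictly greater than $1$ (and below the integrability threshold allowed by Proposition \ref{pro3.5}).

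\textbf{First estimate.} I would pick $\alpha=2$ and a parameter $\beta\in(2,(4m+1)/(m+1))$; such $\beta$ exists since $(4m+1)/(m+1)>2$ for every $m\geq 1$. Because $\Phi$ and the generators have polynomial growth of order at most $m+1$ in $x$, the terminal conditions $\Phi(X_T^{t,\xi})$, $\Phi(X_T^{t,\xi'})$ lie in $L_G^\beta(\Omega_T)$ and the generators $f(\cdot,X_\cdot^{t,\xi},\cdot,\cdot)$, etc.\ satisfy (H1)--(H2), so Proposition \ref{pro3.5}(i) applies and gives
\[
|Y_t^{t,\xi}-Y_t^{t,\xi'}|^2\leq C\,\mathbb{\hat{E}}_t\Bigl[|\Phi(X_T^{t,\xi})-\Phi(X_T^{t,\xi'})|^2+\int_t^T|\hat h_s|^2\,ds\Bigr],
\]
with $\hat h_s\leq L(1+|X_s^{t,\xi}|^m+|X_s^{t,\xi'}|^m)\,|X_s^{t,\xi}-X_s^{t,\xi'}|$. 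By (A3) the terminal difference is dominated in the same form. I then apply Cauchy--Schwarz under $\mathbb{\hat{E}}_t[\cdot]$ to split
\[
\mathbb{\hat{E}}_t\bigl[(1+|X_s^{t,\xi}|^m+|X_s^{t,\xi'}|^m)^2|X_s^{t,\xi}-X_s^{t,\xi'}|^2\bigr]\leq \bigl(\mathbb{\hat{E}}_t[(1+|X_s^{t,\xi}|^m+|X_s^{t,\xi'}|^m)^4]\bigr)^{1/2}\bigl(\mathbb{\hat{E}}_t[|X_s^{t,\xi}-X_s^{t,\xi'}|^4]\bigr)^{1/2}
\]
and apply Proposition \ref{proA.1} with $p=4m$ and $p=4$ (both require $\xi,\xi'\in L_G^{4m}$, which is covered by $L_G^{4m+1}$). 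The first factor is bounded by $C(1+|\xi|^{2m}+|\xi'|^{2m})$ and the second by $C|\xi-\xi'|^2$. Integrating in $s$ and taking square roots yields the claim.

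\textbf{Second estimate.} Here I use Proposition \ref{pro3.4}(i) with any $\alpha\in(1,(4m+1)/(m+1))$, which gives
\[
|Y_t^{t,\xi}|^\alpha\leq C_\alpha\,\mathbb{\hat{E}}_t\Bigl[|\Phi(X_T^{t,\xi})|^\alpha+\int_t^T|h_s^0|^\alpha\,ds\Bigr],
\]
where $h_s^0=|f(s,X_s^{t,\xi},0,0)|+\sum_{i,j}|g_{ij}(s,X_s^{t,\xi},0,0)|\leq C(1+|X_s^{t,\xi}|^{m+1})$ by (A3), and similarly $|\Phi(X_T^{t,\xi})|\leq C(1+|X_T^{t,\xi}|^{m+1})$. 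Proposition \ref{proA.1} then gives $\mathbb{\hat{E}}_t[|X_s^{t,\xi}|^{(m+1)\alpha}]\leq C(1+|\xi|^{(m+1)\alpha})$, and taking $\alpha$-th roots yields $|Y_t^{t,\xi}|\leq C(1+|\xi|^{m+1})$.

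\textbf{Main obstacle.} There is no genuinely deep step; the whole argument is a bookkeeping exercise in Hölder exponents. The only point requiring care is to choose $\alpha$ and (for the first estimate) the auxiliary integrability parameter $\beta$ so that Propositions \ref{pro3.4} and \ref{pro3.5} are applicable and, simultaneously, the conditional moments of $X_s^{t,\xi}$ appearing after Cauchy--Schwarz remain within the $L_G^{4m+1}$ budget of $\xi$. The choice $\alpha=2$ (resp.\ $\alpha$ slightly above $1$) combined with the $L_G^{4m+1}$ hypothesis is what makes everything close; with a weaker integrability assumption on $\xi$ one would need a more delicate interpolation.
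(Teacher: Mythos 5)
Your proposal is correct and follows essentially the same route as the paper: apply the a priori stability estimate of Proposition \ref{pro3.5}(i) with $\alpha=2$, then Cauchy--Schwarz under $\mathbb{\hat{E}}_t[\cdot]$ together with Proposition \ref{proA.1} (with $p=4m$ and $p=4$) for the first inequality, and Proposition \ref{pro3.4}(i) plus Proposition \ref{proA.1} for the second. The only difference is that you make the choice of Hölder/integrability exponents (and the role of the $L_G^{4m+1}$ hypothesis) explicit, which the paper leaves implicit.
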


\begin{proof}
It follows from Proposition \ref{pro3.5} and Proposition \ref{proA.1} that%
\begin{align*}
|Y_{t}^{t,\xi}-Y_{t}^{t,\xi^{\prime}}|^{2}  &  \leq C_{1}\{ \mathbb{\hat{E}%
}_{t}[(1+|X_{T}^{t,\xi}|^{m}+|X_{T}^{t,\xi^{\prime}}|^{m})^{2}|X_{T}^{t,\xi
}-X_{T}^{t,\xi^{\prime}}|^{2}]\\
&  +\int_{t}^{T}\mathbb{\hat{E}}_{t}[(1+|X_{s}^{t,\xi}|^{m}+|X_{s}%
^{t,\xi^{\prime}}|^{m})^{2}|X_{s}^{t,\xi}-X_{s}^{t,\xi^{\prime}}|^{2}]ds\}\\
&  \leq C_{2}(1+|\xi|^{2m}+|\xi^{\prime}|^{2m})\{(\mathbb{\hat{E}}_{t}%
[|X_{T}^{t,\xi}-X_{T}^{t,\xi^{\prime}}|^{4}])^{1/2}\\
&  +\int_{t}^{T}(\mathbb{\hat{E}}_{t}[|X_{s}^{t,\xi}-X_{s}^{t,\xi^{\prime}%
}|^{4}])^{1/2}ds\}\\
&  \leq C_{3}(1+|\xi|^{2m}+|\xi^{\prime}|^{2m})|\xi-\xi^{\prime}|^{2},
\end{align*}
where $C_{1}$, $C_{2}$ and $C_{3}$ depend on $L$, $G$, $n$ and $T$. Thus we
get $|Y_{t}^{t,\xi}-Y_{t}^{t,\xi^{\prime}}|\leq C(1+|\xi|^{m}+|\xi^{\prime
}|^{m})|\xi-\xi^{\prime}|$. By Proposition \ref{pro3.4}, we can get
$|Y_{t}^{t,\xi}|\leq C(1+|\xi|^{m+1})$ by using the similar analysis.
\end{proof}

We are more interested in the case when $\xi=x\in\mathbb{R}^{n}$. We define%
\[
u(t,x):=Y_{t}^{t,x},\ \ (t,x)\in\lbrack0,T]\times\mathbb{R}^{n}.
\]
By Proposition \ref{proA.2}, we immediately have the following estimates:%
\[
|u(t,x)-u(t,x^{\prime})|\leq C(1+|x|^{m}+|x^{\prime}|^{m})|x-x^{\prime}|,
\]%
\[
|u(t,x)|\leq C(1+|x|^{m+1}),
\]
where the constant $C$ depends on $L$, $G$, $n$ and $T$.

\begin{remark}
\label{remA.3} It is important to note that $u(t,x)$ is a deterministic
function of $(t,x)$, because $b$, $h_{ij}$, $\sigma_{j}$, $\Phi$, $f$,
$g_{ij}$ are deterministic functions and $\tilde{B}_{s}:=B_{t+s}-B_{t}$ is a
$G$-Brownian motion.
\end{remark}

The following theorem plays a key role in proving the Feynman-Kac formula.

\begin{theorem}
\label{theA.4} For each $\xi\in L_{G}^{4m+1}(\Omega_{t};\mathbb{R}^{n})$, we
have%
\[
u(t,\xi)=Y_{t}^{t,\xi}.
\]

\end{theorem}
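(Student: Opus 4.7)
The plan is to first establish the identity for random variables $\xi$ that take only finitely many values, and then extend to general $\xi\in L_G^{4m+1}(\Omega_t;\mathbb{R}^n)$ by a density and continuity argument using the Lipschitz estimates of Proposition \ref{proA.2}.

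First I would consider the case of a simple random variable
\[
\xi=\sum_{i=1}^{N}x_{i}I_{A_{i}},\qquad x_{i}\in\mathbb{R}^{n},
\]
where $\{A_{i}\}_{i=1}^{N}$ is a partition of $\Omega_{t}$ with $A_{i}\in\mathcal{B}(\Omega_{t})$. The key observation is that both the $G$-SDE \eqref{App1} and the $G$-BSDE \eqref{App2} only involve the post-$t$ increments $B_{s}-B_{t}$ and $\langle B\rangle_{s}-\langle B\rangle_{t}$ in the driver. Since $A_{i}\in\mathcal{B}(\Omega_{t})$ and, by the local property of stochastic integrals in the $G$-framework, multiplication by $I_{A_{i}}$ commutes with the $dr$, $d\langle B^{i},B^{j}\rangle_{r}$ and $dB_{r}$ integrals after time $t$, the processes
\[
\tilde{X}_{s}:=\sum_{i=1}^{N}X_{s}^{t,x_{i}}I_{A_{i}},\qquad \tilde{Y}_{s}:=\sum_{i=1}^{N}Y_{s}^{t,x_{i}}I_{A_{i}},\qquad \tilde{Z}_{s}:=\sum_{i=1}^{N}Z_{s}^{t,x_{i}}I_{A_{i}},\qquad \tilde{K}_{s}:=\sum_{i=1}^{N}K_{s}^{t,x_{i}}I_{A_{i}}
\]
satisfy the $G$-FBSDE with initial condition $\xi$ at time $t$. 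By the uniqueness of the solution (Theorem \ref{the4.1}) and of the forward $G$-SDE, we conclude $X_{s}^{t,\xi}=\tilde{X}_{s}$ and $Y_{s}^{t,\xi}=\tilde{Y}_{s}$. Evaluating at $s=t$ gives
\[
Y_{t}^{t,\xi}=\sum_{i=1}^{N}u(t,x_{i})I_{A_{i}}=u(t,\xi),
\]
which is the desired identity for simple $\xi$.

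Next I would approximate a general $\xi\in L_{G}^{4m+1}(\Omega_{t};\mathbb{R}^{n})$ by a sequence of simple random variables $\xi_{N}$ with $\|\xi-\xi_{N}\|_{4m+1,G}\to 0$ and $\sup_{N}\|\xi_{N}\|_{4m+1,G}<\infty$. By Proposition \ref{proA.2},
\[
|Y_{t}^{t,\xi}-Y_{t}^{t,\xi_{N}}|\le C(1+|\xi|^{m}+|\xi_{N}|^{m})|\xi-\xi_{N}|,
\]
and the same estimate holds with $u(t,\xi)-u(t,\xi_{N})$ on the left, since $u(t,\cdot)$ inherits the polynomial Lipschitz bound. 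A H\"older inequality with exponents $\tfrac{4m+1}{2m}$ and $\tfrac{4m+1}{2m+1}$ gives
\[
\mathbb{\hat{E}}\bigl[|Y_{t}^{t,\xi}-Y_{t}^{t,\xi_{N}}|\bigr]+\mathbb{\hat{E}}\bigl[|u(t,\xi)-u(t,\xi_{N})|\bigr]\le C'\bigl(1+\|\xi\|_{4m+1,G}^{m}+\|\xi_{N}\|_{4m+1,G}^{m}\bigr)\|\xi-\xi_{N}\|_{4m+1,G}\to 0.
\]
Since $Y_{t}^{t,\xi_{N}}=u(t,\xi_{N})$ by the simple case, passing to the limit yields $Y_{t}^{t,\xi}=u(t,\xi)$ quasi-surely.

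The main obstacle is the simple-$\xi$ step: one must justify that the pieces $(X^{t,x_{i}},Y^{t,x_{i}},Z^{t,x_{i}},K^{t,x_{i}})$ can be glued together on the partition $\{A_{i}\}$ to form a solution of the FBSDE started at $\xi$. This requires that indicators of $\mathcal{B}(\Omega_{t})$-sets can be pulled through the post-$t$ It\^o integral, the $d\langle B\rangle$ integral and the decreasing $G$-martingale part, and that the sum $\sum_{i}I_{A_{i}}K^{t,x_{i}}$ is still a decreasing $G$-martingale. These are standard consequences of the locality of $G$-stochastic integrals and of the definition of conditional $G$-expectation on $\mathbb{L}_{G}^{0,1,t}(\Omega_{T})$ recalled in Proposition \ref{proA.8}; once they are in hand, uniqueness of the $G$-FBSDE closes the argument.
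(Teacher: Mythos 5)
Your two-step plan (prove the identity for simple initial data by gluing, then pass to the limit) is the classical BSDE route, but in the $G$-framework the first step has a genuine gap. With arbitrary $A_{i}\in\mathcal{B}(\Omega_{t})$ the indicators $I_{A_{i}}$ are in general not quasi-continuous, so $\xi_{N}=\sum_{i}x_{i}I_{A_{i}}\notin L_{G}^{4m+1}(\Omega_{t};\mathbb{R}^{n})$ and $\Phi(X_{T}^{t,\xi_{N}})\notin L_{G}^{\beta}(\Omega_{T})$. Hence $Y^{t,\xi_{N}}$ is not even defined through Theorem \ref{the4.1}, and the uniqueness you invoke is not available: that theorem covers terminal data in $L_{G}^{\beta}$ and solutions in $\mathfrak{S}_{G}^{\alpha}(0,T)$ only, whereas your glued process $\tilde{K}=\sum_{i}I_{A_{i}}K^{t,x_{i}}$ is a ``$G$-martingale'' only in the extended sense of the conditional expectation on $\mathbb{L}_{G}^{0,1,t}(\Omega_{T})$ (Proposition \ref{proA.8}); it is not an $L_{G}^{1}$-valued $G$-martingale in the sense of Definition \ref{def2.9}, so the glued triple does not lie in $\mathfrak{S}_{G}^{\alpha}(0,T)$. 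The same mismatch contaminates your limit step: Proposition \ref{proA.2} is stated for $\xi,\xi'\in L_{G}^{4m+1}(\Omega_{t};\mathbb{R}^{n})$, so applying it to the pair $(\xi,\xi_{N})$ is unjustified. Making your route rigorous would require extending the well-posedness and stability theory of $G$-BSDEs to terminal data outside $L_{G}^{\beta}$ (or choosing partitions whose indicators are quasi-continuous and re-proving the gluing lemma in that setting), machinery which neither this paper nor the cited results supply.

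The paper's proof is engineered precisely to avoid solving any BSDE with indicator-built initial data. It first reduces to bounded $\xi$ via Proposition \ref{proA.2}, chooses $\eta^{\varepsilon}=\sum_{i}x_{i}I_{A_{i}}$ with $|\eta^{\varepsilon}-\xi|\leq\varepsilon$, and then estimates $|Y_{t}^{t,\xi}-u(t,\eta^{\varepsilon})|$ pointwise quasi-surely: on each $A_{i}$ it compares $Y_{t}^{t,\xi}$ with $Y_{t}^{t,x_{i}}$, a constant initial datum fully covered by the theory, so that
\[
|Y_{t}^{t,\xi}-u(t,\eta^{\varepsilon})|=\sum_{i}|Y_{t}^{t,\xi}-Y_{t}^{t,x_{i}}|I_{A_{i}}\leq C(1+|\xi|^{m})|\xi-\eta^{\varepsilon}|\leq C(1+|\xi|^{m})\varepsilon,
\]
and combines this with the Lipschitz estimate for the deterministic function $u(t,\cdot)$ to get $|Y_{t}^{t,\xi}-u(t,\xi)|\leq2C(1+|\xi|^{m})\varepsilon$. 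The simple function only ever enters as an argument of $u$, never as an initial condition of the FBSDE. Replacing your gluing-plus-uniqueness step by this pointwise comparison repairs the argument.
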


\begin{proof}
By Proposition \ref{proA.2}, we only need to prove Theorem \ref{theA.4} for
bounded $\xi\in L_{G}^{4m+1}(\Omega_{t};\mathbb{R}^{n})$. Thus for each
$\varepsilon>0$, we can choose a simple function
\[
\eta^{\varepsilon}=\sum_{i=1}^{N}x_{i}I_{A_{i}},
\]
where $(A_{i})_{i=1}^{N}$ is a $\mathcal{B}(\Omega_{t})$-partition and
$x_{i}\in\mathbb{R}^{n}$, such that $|\eta^{\varepsilon}-\xi|\leq\varepsilon$.
It follows from Proposition \ref{proA.2} that%
\begin{align*}
|Y_{t}^{t,\xi}-u(t,\eta^{\varepsilon})|  &  =|Y_{t}^{t,\xi}-\sum_{i=1}%
^{n}u(t,x_{i})I_{A_{i}}|\\
&  =|Y_{t}^{t,\xi}-\sum_{i=1}^{N}Y_{t}^{t,x_{i}}I_{A_{i}}|\\
&  =\sum_{i=1}^{N}|Y_{t}^{t,\xi}-Y_{t}^{t,x_{i}}|I_{A_{i}}\\
&  \leq\sum_{i=1}^{N}C(1+|\xi|^{m})|\xi-x_{i}|I_{A_{i}}\\
&  =C(1+|\xi|^{m})|\xi-\sum_{i=1}^{N}x_{i}I_{A_{i}}|\\
&  \leq C(1+|\xi|^{m})\varepsilon,
\end{align*}
where the constant $C$ depends on $L$, $G$, $n$ and $T$. Noting that
\[
|u(t,\xi)-u(t,\eta^{\varepsilon})|\leq C(1+|\xi|^{m})|\xi-\eta^{\varepsilon
}|\leq C(1+|\xi|^{m})\varepsilon,
\]
we get $|Y_{t}^{t,\xi}-u(t,\xi)|\leq2C(1+|\xi|^{m})\varepsilon$. Since
$\varepsilon$ can be arbitrarily small, we obtain $Y_{t}^{t,\xi}=u(t,\xi)$.
\end{proof}

We now give the Feynman-Kac formula.

\begin{theorem}
\label{theA.9} Let $u(t,x):=Y_{t}^{t,x}$ for $(t,x)\in\lbrack0,T]\times
\mathbb{R}^{n}$. Then $u(t,x)$ is the unique viscosity solution of the
following PDE:%
\begin{equation}
\left\{
\begin{array}
[c]{l}%
\partial_{t}u+F(D_{x}^{2}u,D_{x}u,u,x,t)=0,\\
u(T,x)=\Phi(x),
\end{array}
\right.  \label{feynman}%
\end{equation}
where%
\begin{align*}
F(D_{x}^{2}u,D_{x}u,u,x,t)=  &  G(H(D_{x}^{2}u,D_{x}u,u,x,t))+\langle
b(t,x),D_{x}u\rangle\\
&  +f(t,x,u,\langle\sigma_{1}(t,x),D_{x}u\rangle,\ldots,\langle\sigma
_{d}(t,x),D_{x}u\rangle),
\end{align*}%
\begin{align*}
H_{ij}(D_{x}^{2}u,D_{x}u,u,x,t)=  &  \langle D_{x}^{2}u\sigma_{i}%
(t,x),\sigma_{j}(t,x)\rangle+2\langle D_{x}u,h_{ij}(t,x)\rangle\\
&  +2g_{ij}(t,x,u,\langle\sigma_{1}(t,x),D_{x}u\rangle,\ldots,\langle
\sigma_{d}(t,x),D_{x}u\rangle).
\end{align*}

\end{theorem}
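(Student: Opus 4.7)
The plan is to first establish a dynamic programming principle (DPP) for $u$, then use it together with the $G$-Itô formula and the comparison theorem of Section~3 to verify the viscosity subsolution and supersolution properties, and finally to invoke the standard comparison principle for fully nonlinear parabolic PDEs for uniqueness.

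First I would record the continuity of $u$. Proposition \ref{proA.2} already gives locally polynomial Lipschitz dependence on $x$; joint continuity in $(t,x)$ follows by combining Proposition \ref{pro3.4} for $Y$ with the flow estimates of Proposition \ref{proA.1} for $X$, using that $b,h_{ij},\sigma_j,f,g_{ij},\Phi$ are continuous in $t$. The terminal condition $u(T,x)=\Phi(x)$ is immediate from (\ref{App2}) at $s=T$.

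Next, the DPP. For $t<t+\delta\le T$ and $x\in\mathbb{R}^n$, uniqueness of the forward $G$-SDE gives the flow property $X_s^{t+\delta,X_{t+\delta}^{t,x}}=X_s^{t,x}$ q.s.\ for $s\ge t+\delta$, and uniqueness of the $G$-BSDE then yields $Y_s^{t+\delta,X_{t+\delta}^{t,x}}=Y_s^{t,x}$ on $[t+\delta,T]$. Together with Theorem~\ref{theA.4} applied to the random initial datum $\xi=X_{t+\delta}^{t,x}$, this gives $u(t+\delta,X_{t+\delta}^{t,x})=Y_{t+\delta}^{t,x}$. Substituting into (\ref{App2}) on $[t,t+\delta]$ produces the DPP identity
\begin{equation*}
u(t,x)=u(t+\delta,X_{t+\delta}^{t,x})+\int_t^{t+\delta}\!f(r,\cdot)\,dr+\int_t^{t+\delta}\!g_{ij}(r,\cdot)\,d\langle B^i,B^j\rangle_r-\int_t^{t+\delta}\!Z_r^{t,x}\,dB_r-(K_{t+\delta}^{t,x}-K_t^{t,x}).
\end{equation*}

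The viscosity property is verified by a test-function argument. Fix $(t_0,x_0)$ and a smooth $\varphi$ with $\varphi\ge u$ near $(t_0,x_0)$ and $\varphi(t_0,x_0)=u(t_0,x_0)$. Applying the $G$-Itô formula to $\varphi(s,X_s^{t_0,x_0})$ on $[t_0,t_0+\delta]$ rewrites $\varphi(t_0+\delta,X_{t_0+\delta}^{t_0,x_0})$ as $\varphi(t_0,x_0)$ plus a $ds$ term with coefficient $\partial_t\varphi+\langle b,D_x\varphi\rangle$, a $d\langle B^i,B^j\rangle$ term with coefficient $\tfrac12\langle D_x^2\varphi\,\sigma_i,\sigma_j\rangle+\langle D_x\varphi,h_{ij}\rangle$, and a $dB^j$ term with coefficient $\langle\sigma_j,D_x\varphi\rangle$. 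Comparing this expansion against the DPP via Theorem~\ref{the5.5} (using $\varphi\ge u$ at $t_0+\delta$), dividing by $\delta$, and letting $\delta\downarrow0$, the sublinearity of $\mathbb{\hat{E}}$ absorbs the $d\langle B^i,B^j\rangle$ piece through $G(\cdot)=\sup_{\gamma\in\Gamma}\tfrac12\mathrm{tr}[\gamma\,\cdot]$, producing $\partial_t\varphi+F(D_x^2\varphi,D_x\varphi,\varphi,x_0,t_0)\ge0$ at $(t_0,x_0)$. The supersolution inequality is symmetric.

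Uniqueness then follows from the standard Crandall--Ishii--Lions comparison principle: $F$ is continuous, degenerate elliptic (monotone in $D_x^2 u$ since $G$ is monotonic), Lipschitz in $(u,D_xu)$, and the polynomial growth of $u$ from Proposition~\ref{proA.2} places both sub- and supersolutions in a class where standard arguments apply.

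The main difficulty is the viscosity verification step: the decreasing $G$-martingale $K^{t,x}$ appearing in the DPP contributes only in one direction, so the comparison must be arranged so that $K$ falls on the favorable side; and extracting $G(H)$ from the conditional sublinear expectation of the $d\langle B^i,B^j\rangle$ term requires uniform-in-$\delta$ control together with the representation of $G$ as a supremum over $\Gamma$. This is where the sublinear structure of $\mathbb{\hat{E}}$ is essential and where the argument genuinely departs from the classical BSDE case.
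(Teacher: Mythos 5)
Your overall route (continuity of $u$, the Markov/DPP identity $u(t+\delta,X_{t+\delta}^{t,x})=Y_{t+\delta}^{t,x}$ via Theorem \ref{theA.4}, a test function $\varphi\geq u$ touching $u$ at $(t_0,x_0)$, the comparison theorem to get the sign, a small-time expansion, and citing a comparison principle for uniqueness) is exactly the strategy of the paper. But the step you yourself flag as ``the main difficulty'' is precisely the step you leave unproved, and as written it does not go through. Two issues are conflated there. First, after invoking the DPP the driver terms are $f(r,X_r^{t,x},Y_r^{t,x},Z_r^{t,x})$ and $g_{ij}(r,X_r^{t,x},Y_r^{t,x},Z_r^{t,x})$, where $Y_r^{t,x},Z_r^{t,x}$ are the (unknown) solution processes; your expansion of $\varphi(s,X_s^{t_0,x_0})$ by It\^{o}'s formula gives you candidates $\langle\sigma_j,D_x\varphi\rangle$ for $Z$, but you never justify the identification $Z_r^{t,x}\approx(\langle\sigma_j(t,x),D_x\varphi(t,x)\rangle)_j$ as $\delta\downarrow 0$, and without it you cannot recognize $f(t,x,u,\sigma^TD_x\varphi)$ and $g_{ij}(t,x,u,\sigma^TD_x\varphi)$ in the limit. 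Second, the claim that ``the sublinearity of $\mathbb{\hat{E}}$ absorbs the $d\langle B^i,B^j\rangle$ piece through $G$'' requires the integrand to be (essentially) deterministic: only then is $\int\eta_{ij}\,d\langle B^i,B^j\rangle_r-\int 2G(\eta)\,dr$ a decreasing $G$-martingale that can be dropped exactly; with a random, solution-dependent integrand and the non-additivity of $\mathbb{\hat{E}}$ (the $dr$ and $d\langle B^i,B^j\rangle$ contributions cannot be split), dividing by $\delta$ and passing to the limit is not legitimate as stated.

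The paper resolves both points with a construction your proposal is missing: it introduces the auxiliary $G$-BSDE on $[t,t+\delta]$ with terminal datum $\psi(t+\delta,X_{t+\delta}^{t,x})$, changes variables $\hat{Y}^1_s=\tilde{Y}_s-\psi(s,X_s^{t,x})$, $\hat{Z}^1_s=\tilde{Z}_s-(\langle\sigma_j,D_x\psi\rangle)_j$, so that the unknown $Z$ is eliminated from the leading terms, and then compares with the frozen-coefficient $G$-BSDE in which $X_r^{t,x}$ is replaced by $x$. The frozen equation has $\hat{Z}\equiv 0$ and reduces to the explicit ODE $\hat{Y}_s=\int_s^{t+\delta}[F_1(r,x,\hat{Y}_r,0)+2G(F_2(r,x,\hat{Y}_r,0))]dr$ (this is where $G$ enters, legitimately, because the integrand is deterministic), while Proposition \ref{pro3.5} together with the flow estimates of Proposition \ref{proA.1} gives the quantitative bound $|\hat{Y}^1_t-\hat{Y}_t|\leq C(1+|x|^{m_2+2})\delta^{3/2}$. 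Combined with the comparison theorem (which gives $\hat{Y}^1_t\geq 0$ from $\psi\geq u$), dividing by $\delta$ and letting $\delta\downarrow 0$ then yields $F_1(t,x,0,0)+2G(F_2(t,x,0,0))\geq 0$, i.e.\ the subsolution inequality. Without this freezing-plus-$\delta^{3/2}$ estimate (or an equivalent argument identifying the limits of the $Y$- and $Z$-dependent driver terms under the sublinear expectation), your verification step is an assertion rather than a proof; the rest of your outline (continuity, DPP, terminal condition, uniqueness by a comparison principle as in Appendix C of Peng's notes) is fine.
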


\begin{proof}
The uniqueness of viscosity solution of equation (\ref{feynman}) can be found
in Appendix C in Peng \cite{P10}, we only prove that $u$ is a viscosity
solution of equation (\ref{feynman}). By $Y_{t+\delta}^{t,x}=Y_{t+\delta
}^{t+\delta,X_{t+\delta}^{t,x}}$ and Theorem \ref{theA.4}, we get
$Y_{t+\delta}^{t,x}=u(t+\delta,X_{t+\delta}^{t,x})$ for $\delta\in
\lbrack0,T-t]$ and
\begin{align*}
Y_{t}^{t,x}=  &  u(t+\delta,X_{t+\delta}^{t,x})+\int_{t}^{t+\delta}%
f(r,X_{r}^{t,x},Y_{r}^{t,x},Z_{r}^{t,x})dr\\
&  +\int_{t}^{t+\delta}g_{ij}(r,X_{r}^{t,x},Y_{r}^{t,x},Z_{r}^{t,x})d\langle
B^{i},B^{j}\rangle_{r}-\int_{t}^{t+\delta}Z_{r}^{t,x}dB_{r}-(K_{t+\delta
}^{t,x}-K_{t}^{t,x}).
\end{align*}
Taking $G$-expectation, we get%
\[
u(t,x)=\mathbb{\hat{E}}[u(t+\delta,X_{t+\delta}^{t,x})+\int_{t}^{t+\delta
}f_{r}dr+\int_{t}^{t+\delta}g_{r}^{ij}d\langle B^{i},B^{j}\rangle_{r}],
\]
where $f_{r}=f(r,X_{r}^{t,x},Y_{r}^{t,x},Z_{r}^{t,x})$, $g_{r}^{ij}%
=g_{ij}(r,X_{r}^{t,x},Y_{r}^{t,x},Z_{r}^{t,x})$. In order to prove that $u$ is
a viscosity solution, we first show that $u$ is a continuous function. By
Proposition \ref{proA.2}, we know that $|u(t,x)-u(t,x^{\prime})|\leq
C(1+|x|^{m}+|x^{\prime}|^{m})|x-x^{\prime}|$. By Proposition \ref{proA.1} and
Proposition \ref{pro3.4}, we have $\mathbb{\hat{E}}_{t}[|X_{t+\delta}%
^{t,x}-x|^{2}]\leq C(1+|x|^{2})\delta$ and $\mathbb{\hat{E}}_{t}[|Y_{r}%
^{t,x}|^{2}+\int_{t}^{T}|Z_{r}^{t,x}|^{2}dr]\leq C(1+|x|^{2m+2})$, where $C$
depends on $L$, $G$, $n$ and $T$. \ Thus we get
\begin{align*}
&  |u(t,x)-u(t+\delta,x)|\\
&  \leq C\{(1+|x|^{m})(\mathbb{\hat{E}}[|X_{t+\delta}^{t,x}-x|^{2}%
])^{1/2}+(\mathbb{\hat{E}}[\int_{t}^{T}(|f_{r}|^{2}+|g_{r}^{ij}|^{2}%
)dr])^{1/2}\delta^{1/2}\}\\
&  \leq C(1+|x|^{m+1})\delta^{1/2}.
\end{align*}
It follows that $u$ is a continuous function. For any fixed $(t,x)\in
(0,T)\times\mathbb{R}^{n}$, let $\psi\in C^{2,3}([0,T]\times\mathbb{R}^{n})$
be such that $\psi\geq u$, $\psi(t,x)=u(t,x)$ and $|\partial_{tx_{i}}^{2}%
\psi(t,x)|+|\partial_{x_{i}}\psi(t,x)|+|\partial_{x_{i}x_{j}}^{2}%
\psi(t,x)|+|\partial_{x_{i}x_{j}x_{k}}^{3}\psi(t,x)|\leq C(1+|x|^{m_{1}})$ for
some $m_{1}>0$. Let $(\tilde{Y},\tilde{Z},\tilde{K})$ be the solution of
$G$-BSDE (\ref{App2}) on $[t,t+\delta]$ with terminal condition $\psi
(t+\delta,X_{t+\delta}^{t,x})$. Set $\hat{Y}_{s}^{1}=\tilde{Y}_{s}%
-\psi(s,X_{s}^{t,x})$, $\hat{Z}_{s}^{1}=\tilde{Z}_{s}-(\langle\sigma
_{1}(s,X_{s}^{t,x}),D_{x}\psi(s,X_{s}^{t,x})\rangle,\cdots,\langle\sigma
_{d}(s,X_{s}^{t,x}),D_{x}\psi(s,X_{s}^{t,x})\rangle)$, $\hat{K}_{s}^{1}%
=\tilde{K}_{s}$, applying It\^{o}'s formula to $\tilde{Y}_{s}-\psi
(s,X_{s}^{t,x})$, we obtain that $(\hat{Y}^{1},\hat{Z}^{1},\hat{K}^{1})$ is
the solution of the following $G$-BSDE:
\begin{align*}
\hat{Y}_{s}^{1}=  &  \int_{s}^{t+\delta}F_{1}(r,X_{r}^{t,x},\hat{Y}_{r}%
^{1},\hat{Z}_{r}^{1})dr+\int_{s}^{t+\delta}F_{2}^{ij}(r,X_{r}^{t,x},\hat
{Y}_{r}^{1},\hat{Z}_{r}^{1})d\langle B^{i},B^{j}\rangle_{r}\\
&  -\int_{s}^{t+\delta}\hat{Z}_{r}^{1}dB_{r}-(\hat{K}_{t+\delta}^{1}-\hat
{K}_{s}^{1}),
\end{align*}
where%
\begin{align*}
F_{1}(r,x,y,z)  &  =f(r,x,y+\psi(r,x),z+(\langle\sigma_{1},D_{x}\psi
\rangle,\cdots,\langle\sigma_{d},D_{x}\psi\rangle)(r,x))\\
&  +\partial_{t}\psi(r,x)+\langle b(r,x),D_{x}\psi(r,x)\rangle,
\end{align*}%
\begin{align*}
F_{2}^{ij}(r,x,y,z)  &  =g_{ij}(r,x,y+\psi(r,x),z+(\langle\sigma_{1},D_{x}%
\psi\rangle,\cdots,\langle\sigma_{d},D_{x}\psi\rangle)(r,x))\\
&  +\langle D_{x}\psi(r,x),h_{ij}(r,x)\rangle+\frac{1}{2}\langle D_{x}^{2}%
\psi(r,x)\sigma_{i}(r,x),\sigma_{j}(r,x)\rangle.
\end{align*}
Let $(\hat{Y},\hat{Z},\hat{K})$ be the solution of the following $G$-BSDE:%
\begin{align*}
\hat{Y}_{s}=  &  \int_{s}^{t+\delta}F_{1}(r,x,\hat{Y}_{r},\hat{Z}_{r}%
)dr+\int_{s}^{t+\delta}F_{2}^{ij}(r,x,\hat{Y}_{r},\hat{Z}_{r})d\langle
B^{i},B^{j}\rangle_{r}\\
&  -\int_{s}^{t+\delta}\hat{Z}_{r}dB_{r}-(\hat{K}_{t+\delta}-\hat{K}_{s}).
\end{align*}
It is easy to check that $\hat{Z}_{s}=0$, $\hat{Y}_{s}$ is the solution of the
following ODE:%
\[
\hat{Y}_{s}=\int_{s}^{t+\delta}[F_{1}(r,x,\hat{Y}_{r},0)+2G(F_{2}(r,x,\hat
{Y}_{r},0))]dr,
\]%
\[
\hat{K}_{s}=\int_{t}^{s}F_{2}^{ij}(r,x,\hat{Y}_{r},0)d\langle B^{i}%
,B^{j}\rangle_{r}-\int_{t}^{s}2G(F_{2}(r,x,\hat{Y}_{r},0))dr,
\]
where $F_{2}(r,x,\hat{Y}_{r},0)=(F_{2}^{ij}(r,x,\hat{Y}_{r},0))_{i,j=1}^{d}$.
By Proposition \ref{pro3.5}, we have for any fixed $p>2$%
\begin{align*}
|\hat{Y}_{t}^{1}-\hat{Y}_{t}|^{2}  &  \leq\mathbb{\hat{E}}[\sup_{s\in\lbrack
t,t+\delta]}|\hat{Y}_{s}^{1}-\hat{Y}_{s}|^{2}]\\
&  \leq C\{(\mathbb{\hat{E}}[\sup_{s\in\lbrack t,t+\delta]}\mathbb{\hat{E}%
}_{s}[(\int_{t}^{t+\delta}\hat{F}_{r}dr)^{p}]])^{2/p}+\mathbb{\hat{E}}%
[\sup_{s\in\lbrack t,t+\delta]}\mathbb{\hat{E}}_{s}[(\int_{t}^{t+\delta}%
\hat{F}_{r}dr)^{p}]]\},
\end{align*}
where $\hat{F}_{r}=|F_{1}(r,X_{r}^{t,x},\hat{Y}_{r},0)-F_{1}(r,x,\hat{Y}%
_{r},0)|+\sum_{i,j=1}^{d}|F_{2}^{ij}(r,X_{r}^{t,x},\hat{Y}_{r},0)-F_{2}%
^{ij}(r,x,\hat{Y}_{r},0)|$. It is easy to verify that there exists a constant
$m_{2}>0$ such that
\[
\hat{F}_{r}\leq C(1+|x|^{m_{2}}+|X_{r}^{t,x}|^{m_{2}})|X_{r}^{t,x}-x|.
\]
Then by Theorem 2.13 in \cite{HJPS} and Proposition \ref{proA.1} we can deduce
that $|\hat{Y}_{t}^{1}-\hat{Y}_{t}|\leq C(1+|x|^{m_{2}+2})\delta^{\frac{3}{2}%
}$. By comparison theorem of $G$-BSDEs, we know that $\tilde{Y}_{t}\geq
u(t,x)$, that is $\hat{Y}_{t}^{1}\geq0$. Then we get%
\[
-C(1+|x|^{m_{2}+2})\delta^{1/2}\leq\delta^{-1}\hat{Y}_{t}=\delta^{-1}\int%
_{t}^{t+\delta}[F_{1}(r,x,\hat{Y}_{r},0)+2G(F_{2}(r,x,\hat{Y}_{r},0))]dr.
\]
Letting $\delta\rightarrow0$, we obtain $F_{1}(t,x,0,0)+2G(F_{2}%
(t,x,0,0))\geq0$, which implies that $u$ is a viscosity subsolution. Similarly
we can prove that $u$ is a viscosity supersolution.
\end{proof}

\section{Girsanov transformation}

\subsection{Nonlinear expectations generated by $G$-BSDEs}

For simplicity, we consider the following $G$-BSDE driven by $1$-dimensional
$G$-Brownian motion. The results still hold for the case $d>1$.
\begin{align}
Y_{t}^{T,\xi}=  &  \xi+\int_{t}^{T}f(s,Y_{s}^{T,\xi},Z_{s}^{T,\xi})ds+\int%
_{t}^{T}g(s,Y_{s}^{T,\xi},Z_{s}^{T,\xi})d\langle B\rangle_{s}\nonumber\\
&  -\int_{t}^{T}Z_{s}^{T,\xi}dB_{s}-(K_{T}^{T,\xi}-K_{t}^{T,\xi}),
\label{NonExp}%
\end{align}
where $f$ and $g$ satisfy the Lipschitz condition. We further suppose that
$f(s,y,0)=g(s,y,0)=0$. We define, for each $\xi\in L_{G}^{\beta}(\Omega_{T})$
with $\beta>1$,
\[
\mathbb{\tilde{E}}_{t,T}[\xi]:=Y_{t}^{T,\xi}.
\]
It is easy to verify that for each $T_{1}<T_{2}$ and $\xi\in L_{G}^{\beta
}(\Omega_{T_{1}})$ with $\beta>1$, $\mathbb{\tilde{E}}_{t,T_{1}}%
[\xi]=\mathbb{\tilde{E}}_{t,T_{2}}[\xi]$. Thus we use the notation
$\mathbb{\tilde{E}}_{t}[\xi]$.

\begin{theorem}
\label{the6.1} We have

\begin{description}
\item[(1)] For each $\xi^{1}\geq\xi^{2}$, we have $\mathbb{\tilde{E}}_{t}%
[\xi^{1}]\geq\mathbb{\tilde{E}}_{t}[\xi^{2}]$;

\item[(2)] For each $\xi\in L_{G}^{\beta}(\Omega_{t})$ with $\beta>1$,
$\mathbb{\tilde{E}}_{t}[\xi]=\xi$;

\item[(3)] $\mathbb{\tilde{E}}_{t}[\mathbb{\tilde{E}}_{s}[\xi]]=\mathbb{\tilde
{E}}_{t\wedge s}[\xi]$;

\item[(4)] If $f$ and $g$ are positively homogeneous, then for each
$\lambda_{t}\in L_{G}^{\infty}(\Omega_{t})$, we have $\mathbb{\tilde{E}}%
_{t}[\lambda_{t}\xi]=\lambda_{t}\mathbb{\tilde{E}}_{t}[\xi]$;

\item[(5)] If $f$ and $g$ are subadditive, then $\mathbb{\tilde{E}}_{t}%
[\xi^{1}+\xi^{2}]\leq\mathbb{\tilde{E}}_{t}[\xi^{1}]+\mathbb{\tilde{E}}%
_{t}[\xi^{2}]$;

\item[(6)] If $f$ and $g$ are convex, then $\mathbb{\tilde{E}}_{t}[\lambda
_{t}\xi^{1}+(1-\lambda_{t})\xi^{2}]\leq\lambda_{t}\mathbb{\tilde{E}}_{t}%
[\xi^{1}]+(1-\lambda_{t})\mathbb{\tilde{E}}_{t}[\xi^{2}]$ for each
$\lambda_{t}\in L_{G}^{\infty}(\Omega_{t})$ and $\lambda_{t}\in\lbrack0,1]$;

\item[(7)] For each $\xi\in L_{G}^{1}(\Omega_{t};\mathbb{R}^{m})$, $\eta\in
L_{G}^{1}(\Omega_{T};\mathbb{R}^{n})$, $\Phi\in C_{b.Lip}(\mathbb{R}^{m+n})$,
we have
\[
\mathbb{\tilde{E}}_{t}[\Phi(\xi,\eta)]=\mathbb{\tilde{E}}_{t}[\Phi
(x,\eta)]_{x=\xi}.
\]

\item[(8)] Let $K$ be a decreasing $G$-martingale with $K_{T}\in L^{\alpha
}_{G}(\Omega_{T})$ for some $\alpha>1$. Then we have
\[
\mathbb{\tilde{E}}_{s}[K_{t}]=K_{s}, \ \textmd{for any} \ s\leq t.
\]

\end{description}
\end{theorem}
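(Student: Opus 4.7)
The plan is to reduce the statement directly to Proposition \ref{mfv} and the uniqueness part of Theorem \ref{the4.1}. Fix $s \leq t \leq T$. By definition of $\tilde{\mathbb{E}}$, the quantity $\tilde{\mathbb{E}}_s[K_t]$ equals $Y_s$, where $(Y, Z, \bar{K})$ is the unique solution on $[0,t]$ of the $G$-BSDE
\[
Y_r = K_t + \int_r^t f(u, Y_u, Z_u)\,du + \int_r^t g(u, Y_u, Z_u)\,d\langle B\rangle_u - \int_r^t Z_u\,dB_u - (\bar{K}_t - \bar{K}_r), \quad r \in [0,t].
\]
So it suffices to exhibit the solution explicitly and read off $Y_s = K_s$.

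The natural candidate is $(Y_r, Z_r, \bar{K}_r) = (K_r,\, 0,\, K_r - K_0)$. Since $K$ is a decreasing $G$-martingale, so is $K_\cdot - K_0$, and the latter starts at zero, matching the convention in $\mathfrak{S}_G^\alpha$. Plugging the candidate into the right-hand side and using the standing hypothesis $f(u, K_u, 0) = g(u, K_u, 0) = 0$, everything collapses:
\[
K_t + 0 + 0 - 0 - \bigl((K_t - K_0) - (K_r - K_0)\bigr) = K_r,
\]
which matches the left-hand side. This is precisely the computation of Proposition \ref{mfv}, applied with $T$ replaced by $t$.

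For integrability, note that $K$ is decreasing so $K_T \leq K_r \leq K_0$ for $r \in [0,t]$, and $K_0 = \hat{\mathbb{E}}[K_T]$ is a real number; hence $|K_t| \leq 2|K_0| + |K_T|$ so $K_t$ lies in the appropriate $L_G^\alpha$-type space inherited from $K_T$. Uniqueness of solutions to the $G$-BSDE (Theorem \ref{the4.1}) then forces $Y_r = K_r$ for all $r \in [0,t]$, giving $\tilde{\mathbb{E}}_s[K_t] = Y_s = K_s$. I do not anticipate any genuine obstacle: once Proposition \ref{mfv} has been noted, statement (8) is just the translation of that proposition into the language of the nonlinear expectation $\tilde{\mathbb{E}}$.
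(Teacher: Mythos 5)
Your argument for item (8) is correct and is exactly the paper's route: the candidate solution $(K_r,0,K_r-K_0)$, the standing hypothesis $f(s,y,0)=g(s,y,0)=0$, and uniqueness from Theorem \ref{the4.1} — this is precisely Proposition \ref{mfv}, and the paper indeed disposes of (8) in one line by citing that proposition. The one small thing I would tighten is the integrability remark: rather than the pointwise bound, simply note $K_t=\hat{\mathbb{E}}_t[K_T]$, so $K_t\in L_G^{\alpha}(\Omega_t)$ by continuity of the conditional expectation, which makes $K_t$ an admissible terminal value.

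The genuine gap is that Theorem \ref{the6.1} has eight assertions and your proposal proves only (8), which is the easiest of them. Nothing is said about (1)--(3) (monotonicity, constant preservation and the tower property, which follow from the comparison theorem, $f(s,y,0)=g(s,y,0)=0$ together with uniqueness, and the flow property of the $G$-BSDE), nor about (4)--(6), nor about (7). Items (6) and (7) carry the real content. For (6) the paper writes $\tilde Y_r=\lambda_t Y_r^1+(1-\lambda_t)Y_r^2$ as the solution of a $G$-BSDE whose generators dominate $f(s,\tilde Y_s,\tilde Z_s)$, $g(s,\tilde Y_s,\tilde Z_s)$ by convexity, and whose finite-variation part splits as $-\lambda_t K^1$ plus the extra term $-(1-\lambda_t)K^2$; since a $\lambda_t$-weighted combination of the $K^i$ need not itself be a decreasing $G$-martingale, one must invoke the extended comparison result Theorem \ref{the5.6} with the $V$-term, a point your reduction-to-\ref{mfv} strategy cannot reach. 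For (7) — the locality property that is essential in Step 2 of the Girsanov theorem — the paper approximates $\xi$ by simple functions $\sum_i x_i^n I_{A_i^n}(\xi)$ and uses the a priori stability estimates of Proposition \ref{pro3.5} twice, once to compare $\tilde{\mathbb{E}}_t[\Phi(x_i^n,\eta)]$ with $\tilde{\mathbb{E}}_t[\Phi(\xi,\eta)]$ and once (via the Lipschitz dependence of $\tilde{\mathbb{E}}_t[\Phi(x,\eta)]$ on $x$) with $\tilde{\mathbb{E}}_t[\Phi(x,\eta)]_{x=\xi}$, before letting $n\to\infty$. None of this appears in your proposal, so as written it establishes only one part of the theorem and must be completed.
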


\begin{proof}
It is easy to get (1)-(3). (8) is straightforward from Proposition
\ref{mfv}. First we prove (6). (4) and (5) can be proved similarly.
Let $(Y^{i},Z^{i},K^{i})$, $i=1,2$, be the solutions of $G$-BSDE
(\ref{NonExp}) corresponding to $\xi
^{i}$. We have for $r\in\lbrack t,T]$%
\[
\tilde{Y}_{r}=\tilde{\xi}+\int_{r}^{T}\tilde{f}_{s}ds+\int_{r}^{T}\tilde
{g}_{s}d\langle B\rangle_{s}-\tilde{K}_{T}^{2}+\tilde{K}_{r}^{2}-\int_{r}%
^{T}\tilde{Z}_{s}dB_{s}-(\tilde{K}_{T}^{1}-\tilde{K}_{r}^{1}),
\]
where $\tilde{Y}_{r}=\lambda_{t}Y_{r}^{1}+(1-\lambda_{t})Y_{r}^{2}$,
$\tilde{\xi}=\lambda_{t}\xi^{1}+(1-\lambda_{t})\xi^{2}$, $\tilde{f}%
_{s}=\lambda_{t}f(s,Y_{s}^{1},Z_{s}^{1})+(1-\lambda_{t})f(s,Y_{s}^{2}%
,Z_{s}^{2})$, $\tilde{g}_{s}=\lambda_{t}g(s,Y_{s}^{1},Z_{s}^{1})+(1-\lambda
_{t})g(s,Y_{s}^{2},Z_{s}^{2})$, $\tilde{Z}_{s}=\lambda_{t}Z_{s}^{1}%
+(1-\lambda_{t})Z_{s}^{2}$, $\tilde{K}_{r}^{1}=\lambda_{t}K_{r}^{1}$,
$\tilde{K}_{r}^{2}=(1-\lambda_{t})\tilde{K}_{r}^{2}$. By the convexity of $f$
and $g$, we get $\tilde{f}_{s}\geq f(s,\tilde{Y}_{s},\tilde{Z}_{s})$ and
$\tilde{g}_{s}\geq g(s,\tilde{Y}_{s},\tilde{Z}_{s})$. Note that $-\tilde
{K}_{r}$ is an increasing process, then by Theorem \ref{the5.6} we obtain
$\mathbb{\tilde{E}}_{t}[\tilde{\xi}]\leq\tilde{Y}_{t}$, which implies (6).

We now prove (7). For each given $n\in\mathbb{N}$, we can choose $A_{i}^{n}%
\in\mathcal{B}(\mathbb{R}^{m})$, $i=1,\ldots,k_{n}$, such that $A_{i}^{n}\cap
A_{j}^{n}=\varnothing$ for $i\not =j$, $\cup_{i=1}^{k_{n}}A_{i}^{n}%
=\mathbb{R}^{m}$, $\{x:|x|\leq n\} \subset\cup_{i=1}^{k_{n}-1}A_{i}^{n}$ and
$\lambda(A_{i}^{n})\leq1/n$ for $i\leq k_{n}-1$, where $\lambda(A_{i}^{n})$
denote the diameter of $A_{i}$. Let $x_{i}^{n}\in A_{i}^{n}$, by Proposition
\ref{pro3.5}, we have%
\begin{align*}
&  |\sum_{i=1}^{k_{n}}\mathbb{\tilde{E}}_{t}[\Phi(x_{i}^{n},\eta)]I_{A_{i}%
^{n}}(\xi)-\mathbb{\tilde{E}}_{t}[\Phi(\xi,\eta)]|^{2}\\
&  =\sum_{i=1}^{k_{n}}I_{A_{i}^{n}}(\xi)|\mathbb{\tilde{E}}_{t}[\Phi(x_{i}%
^{n},\eta)]-\mathbb{\tilde{E}}_{t}[\Phi(\xi,\eta)]|^{2}\\
&  \leq C\sum_{i=1}^{k_{n}}I_{A_{i}^{n}}(\xi)\mathbb{\hat{E}}_{t}[|\Phi
(x_{i}^{n},\eta)-\Phi(\xi,\eta)|^{2}]\\
&  =C\mathbb{\hat{E}}_{t}[\sum_{i=1}^{k_{n}}I_{A_{i}^{n}}(\xi)|\Phi(x_{i}%
^{n},\eta)-\Phi(\xi,\eta)|^{2}],
\end{align*}
where $C$ is a constant independent of $n$. Note that%
\[
\sum_{i=1}^{k_{n}}I_{A_{i}^{n}}(\xi)|\Phi(x_{i}^{n},\eta)-\Phi(\xi,\eta
)|^{2}\leq\frac{L^{2}}{n^{2}}+4||\Phi||_{\infty}^{2}I_{[|\xi|>n]},
\]
where $L$ is the Lipschitz constant of $\Phi$, then we get%
\begin{align*}
&  \mathbb{\hat{E}}[|\sum_{i=1}^{k_{n}}\mathbb{\tilde{E}}_{t}[\Phi(x_{i}%
^{n},\eta)]I_{A_{i}^{n}}(\xi)-\mathbb{\tilde{E}}_{t}[\Phi(\xi,\eta)]|^{2}]\\
&  \leq C\mathbb{\hat{E}}[\frac{L^{2}}{n^{2}}+4||\Phi||_{\infty}^{2}%
I_{[|\xi|>n]}]\\
&  \leq C\{ \frac{L^{2}}{n^{2}}+\frac{4||\Phi||_{\infty}^{2}}{n}%
\mathbb{\hat{E}}[|\xi|]\} \rightarrow0.
\end{align*}
On the other hand, by Proposition \ref{pro3.5},, we know that there exists a
constant $C>0$ such that%
\[
|\mathbb{\tilde{E}}_{t}[\Phi(x,\eta)]-\mathbb{\tilde{E}}_{t}[\Phi
(y,\eta)]|\leq C|x-y|\text{ \ for }x,y\in\mathbb{R}^{m}.
\]
Thus%
\begin{align*}
&  \mathbb{\hat{E}}[|\sum_{i=1}^{k_{n}}\mathbb{\tilde{E}}_{t}[\Phi(x_{i}%
^{n},\eta)]I_{A_{i}^{n}}(\xi)-\mathbb{\tilde{E}}_{t}[\Phi(x,\eta)]_{x=\xi
}|^{2}]\\
&  =\mathbb{\hat{E}}[\sum_{i=1}^{k_{n}}I_{A_{i}^{n}}(\xi)|\mathbb{\tilde{E}%
}_{t}[\Phi(x_{i}^{n},\eta)]-\mathbb{\tilde{E}}_{t}[\Phi(x,\eta)]_{x=\xi}%
|^{2}]\\
&  \leq\mathbb{\hat{E}}[\frac{C^{2}}{n^{2}}+4||\Phi||_{\infty}^{2}%
I_{[|\xi|>n]}]\\
&  \leq\frac{C^{2}}{n^{2}}+\frac{4||\Phi||_{\infty}^{2}}{n}\mathbb{\hat{E}%
}[|\xi|]\rightarrow0,
\end{align*}
which implies $\mathbb{\tilde{E}}_{t}[\Phi(\xi,\eta)]=\mathbb{\tilde{E}}%
_{t}[\Phi(x,\eta)]_{x=\xi}$.
\end{proof}

\subsection{Girsanov transformation}

We first consider the following $G$-BSDE driven by $1$-dimensional
$G$-Brownian motion:%
\[
Y_{t}=\xi+\int_{t}^{T}b_{s}Z_{s}ds+\int_{t}^{T}d_{s}Z_{s}d\langle B\rangle
_{s}-\int_{t}^{T}Z_{s}dB_{s}-(K_{T}-K_{t}),
\]
where $(b_{t})_{t\leq T}$ and $(d_{t})_{t\leq T}$ are bounded processes. For
each $\xi\in L_{G}^{\beta}(\Omega_{T})$ with $\beta>1$, define%
\[
\mathbb{\tilde{E}}_{t}[\xi]=Y_{t}.
\]
By Theorem \ref{the6.1}, we know that $\mathbb{\tilde{E}}_{t}[\cdot]$ is a
consistent sublinear expectation.

\begin{theorem}
\label{the6.2} (Girsanov Theorem) Let $(b_{t})_{t\leq T}$ and $(d_{t})_{t\leq
T}$ be bounded processes. Then $\bar{B}_{t}:=B_{t}-\int_{0}^{t}b_{s}%
ds-\int_{0}^{t}d_{s}d\langle B\rangle_{s}$ is a $G$-Brownian motion under
$\mathbb{\tilde{E}}$.
\end{theorem}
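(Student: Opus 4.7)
The plan is to verify the two defining properties of a $G$-Brownian motion under $\mathbb{\tilde E}$: each increment $\bar B_t-\bar B_s$ is $G$-normally distributed with variance parameter $t-s$, and $\bar B_t-\bar B_s$ is independent of $\mathcal H_s^0$. By Peng's characterization of $G$-normality via the $G$-heat equation, it suffices to prove
\[
\mathbb{\tilde E}_s[\varphi(\bar B_t-\bar B_s)] = \mathbb{\hat E}[\varphi(B_{t-s})]
\quad\text{for all } \varphi\in C_{b.Lip}(\mathbb R),\ s\le t\le T,
\]
and to leverage Theorem \ref{the6.1}(7) for independence. Continuity of paths and $\bar B_0=0$ are immediate from the definition of $\bar B$. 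The key is to exhibit the backward $G$-heat equation solution as the $Y$-component of the $G$-BSDE that defines $\mathbb{\tilde E}$.

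Fix $\varphi\in C_{b.Lip}(\mathbb R)$ (initially smooth, with Lipschitz $\varphi$ handled by approximation), and let $u:[s,t]\times\mathbb R\to\mathbb R$ solve
\[
\partial_r u + G(\partial_{xx}^2 u)=0,\qquad u(t,x)=\varphi(x),
\]
so that $u(s,0)=\mathbb{\hat E}[\varphi(B_{t-s})]$. Apply $G$-It\^o's formula to $Y_r:=u(r,\bar B_r-\bar B_s)$ on $[s,t]$. Since $d\bar B_r=dB_r-b_r\,dr-d_r\,d\langle B\rangle_r$ and $\langle\bar B\rangle_r=\langle B\rangle_r$, a direct substitution combined with $\partial_r u=-G(\partial_{xx}^2 u)$ yields
\[
Y_s=\varphi(\bar B_t-\bar B_s)+\int_s^t b_r Z_r\,dr+\int_s^t d_r Z_r\,d\langle B\rangle_r-\int_s^t Z_r\,dB_r-(K_t-K_s),
\]
with $Z_r:=\partial_x u(r,\bar B_r-\bar B_s)$ and
\[
K_r-K_s := \tfrac12\int_s^r \partial_{xx}^2 u\,d\langle B\rangle_v-\int_s^r G(\partial_{xx}^2 u)\,dv.
\]
Since $\tfrac12 \partial_{xx}^2 u\cdot v\le G(\partial_{xx}^2 u)$ for every $v\in[\underline\sigma^2,\bar\sigma^2]$, $K$ is decreasing; the $G$-martingale property of this process is a standard by-product of the construction of $\langle B\rangle$ (see Peng \cite{P10}). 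By the uniqueness part of Theorem \ref{the4.1}, the triple $(Y,Z,K)$ is \emph{the} solution of the $G$-BSDE defining $\mathbb{\tilde E}$ with terminal datum $\varphi(\bar B_t-\bar B_s)$, so $\mathbb{\tilde E}_s[\varphi(\bar B_t-\bar B_s)]=Y_s=u(s,0)$, a deterministic constant depending only on $t-s$.

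With this identity in hand, $\bar B_t-\bar B_s$ is $G$-normally distributed with parameter $t-s$ under $\mathbb{\tilde E}$. For independence, Theorem \ref{the6.1}(7) gives, for any $\xi\in L_G^1(\Omega_s;\mathbb R^m)$ and $\Phi\in C_{b.Lip}(\mathbb R^{m+1})$,
\[
\mathbb{\tilde E}_s[\Phi(\xi,\bar B_t-\bar B_s)]=\mathbb{\tilde E}_s[\Phi(x,\bar B_t-\bar B_s)]_{x=\xi},
\]
and the inner conditional expectation is deterministic by the previous paragraph, which is precisely the independence property in the $G$-framework (Definition \ref{def2.3}). Iterating over a finite partition $0=t_0<t_1<\cdots<t_n=T$ reconstructs the finite-dimensional distributions of $\bar B$ under $\mathbb{\tilde E}$ and matches them with those of a canonical $G$-Brownian motion, completing the verification.

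The main technical obstacle is the regularity needed to apply $G$-It\^o's formula: for Lipschitz $\varphi$ the solution $u$ of the $G$-heat equation is only $C^{1,2}$ in the interior, so one cannot integrate up to $r=t$ directly. I would first prove the identity for $\varphi\in C_b^\infty(\mathbb R)$ with bounded derivatives of all orders, where the It\^o expansion and the integrability of $Z=\partial_x u$ in $H_G^\beta$ are unproblematic, and then extend to general $\varphi\in C_{b.Lip}$ by mollification, using the $L^p$-continuity of the $G$-heat flow in the terminal datum together with the a priori estimate of Proposition \ref{pro3.5} to guarantee that $\mathbb{\tilde E}_s[\varphi_n(\bar B_t-\bar B_s)]\to\mathbb{\tilde E}_s[\varphi(\bar B_t-\bar B_s)]$ in $L_G^\beta$. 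A secondary, but routine, point is checking that $K$ constructed above lies in the class of decreasing $G$-martingales with $K_t\in L_G^\alpha$, which follows from the boundedness of $\partial_{xx}^2 u$ once $\varphi$ is smoothed.
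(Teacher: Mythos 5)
Your verification argument is correct in substance, but it takes a genuinely different route from the paper. The paper first reduces to \emph{constant} $b,d$ so that its nonlinear Feynman--Kac theorem (Theorem \ref{theA.9}) applies: there $\tilde u(t,x)=\mathbb{\tilde E}[\varphi(x+\bar B_t)]$ is shown to be a viscosity solution of a PDE whose drift terms cancel, leaving the $G$-heat equation, whence $\mathbb{\tilde E}[\varphi(\bar B_t)]=\mathbb{\hat E}[\varphi(B_t)]$ by uniqueness of viscosity solutions; it then treats piecewise-constant $b,d$ by conditioning through Theorem \ref{the6.1}(7), and finally general bounded $b,d$ by an $M_G^2$-approximation using the stability estimates of Proposition \ref{pro3.5}. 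You instead run a verification argument: solve the backward $G$-heat equation, apply It\^o's formula along $\bar B$, and observe that the drift and $d\langle B\rangle$ terms produced by $d\bar B_r=dB_r-b_r\,dr-d_r\,d\langle B\rangle_r$ are exactly the generator $f=b_rz$, $g=d_rz$ of the BSDE defining $\mathbb{\tilde E}$, so that $(u(r,\bar B_r-\bar B_s),\partial_xu,K)$ is, by uniqueness in Theorem \ref{the4.1}, the solution with terminal datum $\varphi(\bar B_t-\bar B_s)$ and $\mathbb{\tilde E}_s[\varphi(\bar B_t-\bar B_s)]=u(s,0)$ is deterministic; independence and finite-dimensional distributions then follow from Theorem \ref{the6.1}(7) exactly as in the paper's Step 2. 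What your route buys is that general (random, time-dependent) bounded $b,d$ are handled in one stroke, with no constant-coefficient reduction and no final limiting step, and you get the conditional identity directly; what it costs is the classical (interior) regularity of the $G$-heat equation solution needed for It\^o's formula, plus the check that $Z=\partial_xu(\cdot,\bar B_\cdot-\bar B_s)\in H_G^\alpha$ and that $K=\tfrac12\int\partial^2_{xx}u\,d\langle B\rangle-\int G(\partial^2_{xx}u)\,dr$ is a decreasing $G$-martingale in the admissible class $\mathfrak{S}_G^\alpha$ (so that uniqueness is applicable) -- points you correctly flag, and which are standard given the non-degeneracy of $G$ (Krylov interior estimates, mollification of $\varphi$, and the stability of Proposition \ref{pro3.5}). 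In short: the paper leans on its Feynman--Kac theorem and viscosity-solution uniqueness plus two approximation layers; you lean on classical PDE regularity and the explicit construction; both hinge on Theorem \ref{the6.1}(7) for the independence/iteration step.
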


\begin{proof}
We only need to show that for each $\Phi\in C_{b.Lip}(\mathbb{R}^{n})$,
$t_{1}<\cdots<t_{n}$,
\[
\mathbb{\tilde{E}}[\Phi(\bar{B}_{t_{1}},\bar{B}_{t_{2}}-\bar{B}_{t_{1}}%
,\ldots,\bar{B}_{t_{n}}-\bar{B}_{t_{n-1}})]=\mathbb{\hat{E}}[\Phi(B_{t_{1}%
},B_{t_{2}}-B_{t_{1}},\ldots,B_{t_{n}}-B_{t_{n-1}})].
\]

Step 1. We consider the case $b_{s}\equiv b$ and $d_{s}\equiv d$. For each
$\varphi\in C_{b.Lip}(\mathbb{R})$, we define%
\[
\tilde{u}(t,x)=\mathbb{\tilde{E}}[\varphi(x+\bar{B}_{t})].
\]
Set $u(t,x)=\tilde{u}(T-t,x)$ for fixed $T>0$, by Theorem \ref{theA.9}, we
obtain $u$ satisfies the following PDE:%
\[
\partial_{t}u-b\partial_{x}u+b\partial_{x}u+2G(-d\partial_{x}u+\frac{1}%
{2}\partial_{xx}^{2}u+d\partial_{x}u)=0,u(T,x)=\varphi(x),
\]
i.e. $\partial_{t}u+G(\partial_{xx}^{2}u)=0$, $u(T,x)=\varphi(x)$. Thus
$\mathbb{\tilde{E}}[\varphi(\bar{B}_{t})]=\mathbb{\hat{E}}[\varphi(B_{t})]$
for any $t\geq0$, $\varphi\in C_{b.Lip}(\mathbb{R})$.

Step 2. We consider the case $b_{s}^{n}=\sum_{i=0}^{n-1}\xi_{i}I_{[t_{i}%
^{n},t_{i+1}^{n})}(s)$, $d_{s}^{n}=\sum_{i=0}^{n-1}\eta_{i}I_{[t_{i}%
^{n},t_{i+1}^{n})}(s)$, where $\xi_{i}$, $\eta_{i}\in Lip(\Omega_{t_{i}^{n}}%
)$. For each $\varphi\in C_{b.Lip}(\mathbb{R})$, we have%
\[
\mathbb{\tilde{E}}[\varphi(\bar{B}_{t_{i+1}^{n}})]=\mathbb{\tilde{E}}%
[\varphi(\bar{B}_{t_{i}^{n}}+B_{t_{i+1}^{n}}-B_{t_{i}^{n}}-\xi_{i}(t_{i+1}%
^{n}-t_{i}^{n})-\eta_{i}(\langle B\rangle_{t_{i+1}^{n}}-\langle B\rangle
_{t_{i}^{n}}))].
\]
By (7) in Theorem \ref{the6.1}, we get%
\begin{align*}
&  \mathbb{\tilde{E}}[\varphi(\bar{B}_{t_{i+1}^{n}})]\\
&  =\mathbb{\tilde{E}}[\varphi(x+B_{t_{i+1}^{n}}-B_{t_{i}^{n}}-b(t_{i+1}%
^{n}-t_{i}^{n})-d(\langle B\rangle_{t_{i+1}^{n}}-\langle B\rangle_{t_{i}^{n}%
}))]_{x=\bar{B}_{t_{i}^{n}},b=\xi_{i},d=\eta_{i}}\\
&  =\mathbb{\tilde{E}}[\mathbb{\hat{E}}[\varphi(x+B_{t_{i+1}^{n}}-B_{t_{i}%
^{n}})]_{x=\bar{B}_{t_{i}^{n}}}].
\end{align*}
Repeat this process, we obtain $\mathbb{\tilde{E}}[\varphi(\bar{B}%
_{t_{i+1}^{n}})]=\mathbb{\hat{E}}[\varphi(B_{t_{i+1}^{n}})]$. Similarly, we
can get%
\[
\mathbb{\tilde{E}}[\Phi(\bar{B}_{t_{1}},\bar{B}_{t_{2}}-\bar{B}_{t_{1}}%
,\ldots,\bar{B}_{t_{n}}-\bar{B}_{t_{n-1}})]=\mathbb{\hat{E}}[\Phi(B_{t_{1}%
},B_{t_{2}}-B_{t_{1}},\ldots,B_{t_{n}}-B_{t_{n-1}})].
\]

Step 3. For general bounded processes $(b_{t})$ and $(d_{t})$, we can choose
uniformly bounded processes $(b_{t}^{n})$, $(d_{t}^{n})\in M_{G}^{2,0}(0,T)$
such that $||b^{n}-b||_{M_{G}^{2}}+||d^{n}-d||_{M_{G}^{2}}\rightarrow0$. By
Proposition \ref{pro3.5}, we obtain the result by letting $n\rightarrow\infty$.
\end{proof}

\begin{remark}
\label{rem6.3} If $b_{s}=0$, we know by Remark \ref{rem5.3}%
\[
\mathbb{\tilde{E}}_{t}[\xi]=\mathbb{\hat{E}}_{t}[\xi\exp(\int_{t}^{T}%
d_{s}dB_{s}-\frac{1}{2}\int_{t}^{T}|d_{s}|^{2}d\langle B\rangle_{s})].
\]
This type of Girsanov transformation was studied in \cite{XSZ, Os}, but here
we give a simple proof. If $b_{s}\not =0$, we know by Theorem \ref{the5.2}%
\begin{align*}
\mathbb{\tilde{E}}_{t}[\xi]=  &  \mathbb{\hat{E}}_{t}^{\tilde{G}}[\xi\exp
(\int_{t}^{T}d_{s}dB_{s}-\frac{1}{2}\int_{t}^{T}|d_{s}|^{2}d\langle
B\rangle_{s}-\int_{t}^{T}b_{s}d_{s}ds\\
&  +\int_{t}^{T}b_{s}d\tilde{B}_{s}-\frac{1}{2}\int_{t}^{T}|b_{s}|^{2}%
d\langle\tilde{B}\rangle_{s})],
\end{align*}
where $(B,\tilde{B})$ is an auxiliary extended $\tilde{G}$-Brownian motion
and
\[
\tilde{G}(A)=\frac{1}{2}\sup_{\underline{\sigma}^{2}\leq v\leq\bar{\sigma}%
^{2}}\mathrm{tr}\left[  A\left[
\begin{array}
[c]{cc}%
v & 1\\
1 & v^{-1}%
\end{array}
\right]  \right]  ,\ A\in\mathbb{S}_{2}.
\]

\end{remark}

We now consider the Girsanov transformation for the case $d>1$. Let
$B_{t}=(B_{t}^{i})_{i=1}^{d}$ be a $d$-dimensional $G$-Brownian motion. We
consider the following $G$-BSDE:%
\[
Y_{t}=\xi+\int_{t}^{T}b_{s}Z_{s}ds+\int_{t}^{T}d_{s}^{ij}Z_{s}d\langle
B^{i},B^{j}\rangle_{s}-\int_{t}^{T}Z_{s}dB_{s}-(K_{T}-K_{t}),
\]
where $(b_{t})_{t\leq T}$ and $(d_{t}^{ij})_{t\leq T}$ are $\mathbb{R}^{d}%
$-valued bounded processes. By Theorem \ref{the6.1}, $\mathbb{\tilde{E}}%
_{t}[\xi]:=Y_{t}$ is a consistent sublinear expectation.

\begin{theorem}
\label{the6.4} (Girsanov Theorem) Let $(b_{t})_{t\leq T}$ and $(d_{t}%
^{ij})_{t\leq T}$ be $\mathbb{R}^{d}$-valued bounded processes. Then $\bar
{B}_{t}:=B_{t}-\int_{0}^{t}b_{s}ds-\int_{0}^{t}d_{s}^{ij}d\langle B^{i}%
,B^{j}\rangle_{s}$ is a $d$-dimensional $G$-Brownian motion under
$\mathbb{\tilde{E}}$.
\end{theorem}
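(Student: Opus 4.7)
Following the scheme of the proof of Theorem \ref{the6.2}, I would prove the identity of joint distributions
\[
\mathbb{\tilde{E}}[\Phi(\bar{B}_{t_{1}},\bar{B}_{t_{2}}-\bar{B}_{t_{1}},\ldots,\bar{B}_{t_{n}}-\bar{B}_{t_{n-1}})]=\mathbb{\hat{E}}[\Phi(B_{t_{1}},B_{t_{2}}-B_{t_{1}},\ldots,B_{t_{n}}-B_{t_{n-1}})]
\]
for every $\Phi\in C_{b.Lip}(\mathbb{R}^{dn})$ and every $0<t_{1}<\cdots<t_{n}\leq T$. The three stages are: (i) constant coefficients $b_{s}\equiv b$, $d_{s}^{ij}\equiv d^{ij}$ via the Feynman-Kac formula (Theorem \ref{theA.9}); (ii) step-function coefficients with entries in $L_{ip}(\Omega_{t_{i}^{n}})$ via the $d$-dimensional analogue of property (7) of Theorem \ref{the6.1}; (iii) general bounded processes via $M_{G}^{2}$-approximation and Proposition \ref{pro3.5}.

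\textbf{The PDE argument.} Fix constants $b\in\mathbb{R}^{d}$, $d^{ij}\in\mathbb{R}^{d}$ and $\varphi\in C_{b.Lip}(\mathbb{R}^{d})$, and set $\tilde{u}(t,x):=\mathbb{\tilde{E}}[\varphi(x+\bar{B}_{t})]$. View $X_{s}^{0,x}:=x+\bar{B}_{s}$ as the solution of the forward $G$-SDE with coefficients $b(s,x)=-b$, $h_{ij}(s,x)=-d^{ij}$, $\sigma_{j}(s,x)=e_{j}$ in the notation of (\ref{App1}), coupled with a BSDE of the form (\ref{App2}) whose drivers are $f(s,x,y,z)=b\cdot z$ and $g_{ij}(s,x,y,z)=d^{ij}\cdot z$. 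Then Theorem \ref{theA.9} implies that $u(s,x):=Y_{s}^{s,x}$ is the unique viscosity solution of $\partial_{s}u+F(D_{x}^{2}u,D_{x}u,u,x,s)=0$ on $[0,t]$ with $u(t,x)=\varphi(x)$. Since $\sigma_{j}=e_{j}$ one has $\langle\sigma_{j},D_{x}u\rangle=\partial_{x_{j}}u$, so the drift contributions cancel,
\[
\langle b(s,x),D_{x}u\rangle+f(s,x,u,D_{x}u)=-b\cdot D_{x}u+b\cdot D_{x}u=0,
\]
and inside the $G$ operator
\[
H_{ij}=\partial_{x_{i}x_{j}}^{2}u+2\langle D_{x}u,-d^{ij}\rangle+2d^{ij}\cdot D_{x}u=\partial_{x_{i}x_{j}}^{2}u.
\]
Hence $u$ solves the backward $G$-heat equation $\partial_{s}u+G(D_{x}^{2}u)=0$ with $u(t,\cdot)=\varphi$, and by uniqueness of viscosity solutions $\tilde{u}(t,x)=u(0,x)=\mathbb{\hat{E}}[\varphi(x+B_{t})]$.

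\textbf{Extension and obstacle.} For step coefficients $b_{s}^{n}=\sum_{i}\xi_{i}I_{[t_{i}^{n},t_{i+1}^{n})}(s)$ and $d_{s}^{n,ij}=\sum_{i}\eta_{i}^{ij}I_{[t_{i}^{n},t_{i+1}^{n})}(s)$ with $\xi_{i},\eta_{i}^{ij}\in L_{ip}(\Omega_{t_{i}^{n}})$, I would argue by backward induction on $i$: on $[t_{i}^{n},t_{i+1}^{n})$ the $\mathcal{H}_{t_{i}^{n}}$-measurable coefficients may be frozen, and property (7) of Theorem \ref{the6.1} (whose proof extends verbatim to the $d$-dimensional BSDE driver used here) reduces $\mathbb{\tilde{E}}_{t_{i}^{n}}[\cdot]$ on that slice to the constant-coefficient case, producing an $\mathbb{\hat{E}}_{t_{i}^{n}}[\cdot]$ acting on a function of $B_{t_{i+1}^{n}}-B_{t_{i}^{n}}$. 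Iterating down to $i=0$ and using the tower property gives the claimed joint-distribution identity for step coefficients. For general bounded $(b_{t}),(d_{t}^{ij})$, choose uniformly bounded step approximants converging in $M_{G}^{2}$; Proposition \ref{pro3.5} gives $L_{G}^{2}$-continuity of $Y$ in the coefficients, so one passes to the limit in the identity. The main technical point is keeping the multi-index Einstein bookkeeping correct in the PDE computation above; once the cancellations inside $\langle b,D_{x}u\rangle+f$ and inside $H_{ij}$ are in place, the remaining two steps are direct adaptations of the one-dimensional proof of Theorem \ref{the6.2}.
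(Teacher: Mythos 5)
Your proposal is correct and follows essentially the same route as the paper: the paper proves Theorem \ref{the6.4} by noting the argument is the same as for Theorem \ref{the6.2}, i.e.\ the three-step scheme (constant coefficients via Theorem \ref{theA.9} and the reduction of the PDE to the $G$-heat equation, step coefficients via property (7) of Theorem \ref{the6.1}, then $M_{G}^{2}$-approximation with Proposition \ref{pro3.5}), which is exactly what you carry out, with the multi-dimensional cancellations in $F$ and $H_{ij}$ worked out explicitly.
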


\begin{proof}
The proof is similar to Theorem \ref{the6.2}.
\end{proof}


\renewcommand{\refname}{\large References}{\normalsize \ }

\end{document}